\documentclass{article}



\setcounter{tocdepth}{2}


\usepackage{fixmath}

\usepackage{csquotes}
\usepackage[english]{babel}

\usepackage{amsmath}
\usepackage{amssymb,latexsym,stmaryrd}


\usepackage{enumitem}

\usepackage{tikz}
\usetikzlibrary{matrix,arrows,calc,fit,decorations.pathmorphing,fit,shapes.geometric,cd,external}
\usepackage{calc}
\usepackage{array}

\tikzexternalize


\usepackage{amsthm} 

\usepackage[capitalise]{cleveref}

\usepackage[style=alphabetic, bibstyle=alphabetic, 
backend=bibtex8, bibencoding=ascii, 
url=false,doi=false,isbn=false]{biblatex}

\addbibresource{bibliographieBQOascii.bib}
\addbibresource{mybib.bib}



\theoremstyle{plain} 

\newtheorem{theorem}{Theorem}[section] 
\newtheorem{proposition}[theorem]{Proposition}
\newtheorem{corollary} [theorem]{Corollary}

\newtheorem{lemma} [theorem]{Lemma}
\newtheorem{fact}[theorem]{Fact}

\newtheorem{conjecture}{Conjecture}
\newtheorem{problem}[conjecture]{Problem}

\theoremstyle{definition}
\newtheorem{definition}[theorem]{Definition}
\newtheorem{notation}[theorem]{Notation} 
 	
\newtheorem{definitions}[theorem]{Definitions}	

\theoremstyle{remark}
\newtheorem{remark}[theorem]{Remark}

\newtheorem{example}[theorem]{Example}
\newtheorem{examples}[theorem]{Examples}

\newlist{enumThm}{enumerate}{2}
\setlist[enumThm,1]{label=\upshape(\roman*)}
\setlist[enumThm,2]{label=\upshape(\alph*)}

\newlist{enumWqo}{enumerate}{1}
\setlist[enumWqo]{labelindent=\parindent, leftmargin=*,label=\normalfont\upshape (W\arabic*)}

\newlist{enumDefn}{enumerate}{2}
\setlist[enumDefn,1]{label=(\arabic*)}
\setlist[enumDefn,2]{label=\arabic{enumDefni}.\alph*,leftmargin=3em}

\newlist{enumRems}{enumerate}{1}
\setlist[enumRems,1]{label=\itshape\roman*.}

\newlist{enumExs}{enumerate}{1}
\setlist[enumExs,1]{label=\upshape(\alph*)}

\newlist{itemProof}{itemize}{1}
\setlist[itemProof]{label=--}

\newlist{enumProof}{enumerate}{1}
\setlist[enumProof,1]{label=\upshape(\alph*)}

\newlist{enumProofParameter}{enumerate}{1}
\setlist[enumProofParameter,1]{label=\upshape(\alph*)$_{\xi}$}

\newlist{descThm}{description}{1}
\setlist[descThm]{font=\normalfont}

\newcommand{\SemiCol}[1]{#1:}

\newlist{descProofUp}{description}{1}
\setlist[descProofUp]{font={\normalfont\SemiCol}}

\newcommand{\itSemiCol}[1]{\textit{#1:}}

\newlist{descProof}{description}{1}
\setlist[descProof]{font={\normalfont\itSemiCol}}

\newcommand{\ts}[1]{\textsuperscript{#1}}

%
\makeatletter
\def\enumfix{%
\if@inlabel
 \noindent\par\nobreak\vskip-\parskip\vskip-\parskip\hrule\@height\z@
\fi}
%
%
%
%
%

%

\makeatletter
\if@cref@capitalise

\crefname{IntroTheorem}{Theorem}{Theorems}
\crefname{section}{Section}{Sections}
\Crefname{section}{Section}{Sections}
\crefname{paragraph}{Paragraph}{Paragraphs}
\Crefname{paragraph}{Paragraph}{Paragraphs}
\crefname{subsection}{Subsection}{Subsections}
\Crefname{subsection}{Subsection}{Subsections}
\crefname{equation}{}{}

\crefname{chapter}{Chapter}{Chapters}

\else

\crefname{chapter}{chapter}{chapters}
\crefname{section}{section}{sections}
\crefname{subsection}{subsection}{subsections}
\crefname{paragraph}{paragraph}{paragraphs}
\crefname{equation}{}{}

\fi

\makeatother

\Crefname{figure}{Figure}{Figure}

\crefformat{enumWqoi}{#2#1#3}
 \Crefformat{enumWqoi}{#2#1#3}
\crefname{enumWqoi}{}{}

\crefformat{enumDefni}{#2#1#3}
 \Crefformat{enumDefni}{#2#1#3}
\crefname{enumDefni}{}{}
\crefrangeformat{enumDefni}{#3#1#4--#5#2#6}

\crefformat{enumDefnii}{#2#1#3}
 \Crefformat{enumDefnii}{#2#1#3}
\crefname{enumDefnii}{}{}
 
 \crefformat{enumThmi}{#2#1#3}
 \Crefformat{enumThmi}{#2#1#3}
\crefname{enumThmi}{}{}

 \crefformat{enumThmii}{#2#1#3}
 \Crefformat{enumThmii}{#2#1#3}
\crefname{enumThmii}{}{}

 \crefformat{enumRemsi}{#2#1#3}
 \Crefformat{enumRemsi}{#2#1#3}
\crefname{enumRemsi}{}{}

 \crefformat{enumProofi}{#2#1#3}
 \Crefformat{enumProofi}{#2#1#3}
\crefname{enumThmi}{}{}
 




\newcommand{\front}{front\ts{e}}

\newcommand{\wqo}{\textsc{wqo}}
\newcommand{\bqo}{\textsc{bqo}}

\newcommand{\contMor}{\leq_{\text{ch}}}
\newcommand{\NcontMor}{\not\leq_{\text{ch}}}
\newcommand{\contMorEq}{\equiv_{\text{ch}}}

\newcommand{\CtblSubs}{\mathcal{P}_{<\aleph_{1}}}

\newcommand{\PowerQ}{V^{*}}
\newcommand{\GPowerQ}{G_{V^{*}}}
\newcommand{\HerCtblQ}{H^{*}_{\omega_{1}}}

\DeclareMathOperator{\supp}{supp_{Q}}

\newcommand{\incomp}{\mathbin{|}}

\newcommand{\Rado}{\mathfrak{R}}

\newcommand{\conc}{\mathrel{{}^{\smallfrown}}} 
\newcommand{\restr}[1]{\mathord{\upharpoonright_{#1}}} 

\DeclareMathOperator{\id}{\mathrm{id}} 

\newcommand{\bai}{\omega^{\omega}}

\DeclareMathOperator*{\Seq}{\mathrm{seq}}

\newcommand{\SSI}{\quad\longleftrightarrow\quad} 



\DeclareMathOperator{\base}{\mathord{\bigcup}}

\DeclareMathOperator{\rk}{rk}

\DeclareMathOperator{\upcl}{\uparrow}
\DeclareMathOperator{\dcl}{\downarrow}

\DeclareMathOperator{\Down}{\mathcal{D}} 
\DeclareMathOperator{\DownFb}{\Down_{\text{fb}}} 
\DeclareMathOperator{\Final}{\mathcal{U}} 

\newcommand{\segm} {\sqsubseteq}
\newcommand{\mges} {\sqsupseteq}
\newcommand{\segs} {\sqsubset} 
\newcommand{\sges} {\sqsupset} 
\newcommand{\tri} {\mathrel{\lhd}} 
\newcommand{\ClUp}[1] {\langle #1\rangle}

\newcommand{\Schreier}{\mathcal{S}}

\newcommand{\shift}[1]{{}_{*}#1}
\newcommand{\SHIFT}{\mathsf{S}}

\newcommand{\finSub}[1] {[#1]^{<\infty}}
\newcommand{\infSub}[1]{[#1]^{\infty}}

\newcommand{\Su}{\mathsf{s}}
\newcommand{\IIf}{\mathcal{E}}




\newcommand{\I}{\mathrm{I}}
\newcommand{\II}{\mathrm{II}}




\newcommand{\rtrans}[1]{\mathcal{R}_{#1}}

\title{Towards Better: a motivated introduction to better-quasi-orders}

\author{Yann Pequignot\thanks{The present article is adapted from the introductory chapter of the author's PhD thesis. He therefore would like to take the opportunity to thank the University of Lausanne for hospitality and support during the writing of his thesis. He also gratefully acknowledges the support of the Swiss National Science Foundation (SNF) through grant P2LAP2$\_$164904.}}

\frenchspacing

\begin{document}

\hyphenation{sub-sequence}

\renewcommand{\setminus}{\smallsetminus}
\renewcommand{\geq}{\geqslant}
\renewcommand{\leq}{\leqslant}
\renewcommand{\ngeq}{\ngeqslant} 
\renewcommand{\nleq}{\nleqslant}


\maketitle

\begin{abstract}
The well-quasi-orders (WQO) play an important role in various fields such as Computer Science, Logic or Graph Theory. Since the class of WQOs lacks closure under some important operations, the proof that a certain quasi-order is WQO consists often of proving it enjoys a stronger and more complicated property, namely that of being a better-quasi-order (BQO). 

Several articles -- notably \cite{milner1985basic,kruskal1972theory,simpson1985bqo,laverfraisse,laver1976well,forster2003better} -- contains valuable introductory material to the theory of BQOs. However, a textbook entitled \enquote{Introduction to better-quasi-order theory} is yet to be written. Here is an attempt to give a motivated and self-contained introduction to the deep concept defined by Nash-Williams that we would expect to find in such a textbook.
\end{abstract}

%

\section{Introduction}
Mathematicians have imagined a myriad of objects, most of them infinite, and inevitably followed by an infinite suite. 

What does it mean to understand them? 
How does a mathematician venture to make sense of these infinities he has imagined? 

Perhaps, one attempt could be to organise them, to arrange them, to order them.
At first, the mathematician can try to achieve this in a relative sense by comparing the objects according to some idea of complexity; 
this object should be above that other one, those two should be side by side, etc. 
So the graph theorist may consider the \emph{minor relation} between graphs, the recursion theorist may study the \emph{Turing reducibility} between sets of natural numbers, the descriptive set theorist can observe subsets of the Baire space through the lens of the \emph{Wadge reducibility} or equivalence relations through the prism of the \emph{Borel reducibility}, or the set theorist can organise ultrafilters according to the \emph{Rudin-Keisler ordering}.

This act of organising objects amounts to considering an instance of the very general mathematical notion of a \emph{quasi-order} (qo), namely a transitive and reflexive relation. 

As a means of classifying a family of objects, the following property of a quasi-order is usually desired: a quasi-order is said to be \emph{well-founded} if every non-empty sub-family of objects admits a minimal element. This means that there are minimal -- or simplest -- objects which we can display on a first bookshelf, and then, amongst the remaining objects there are again simplest objects which we can display on a second bookshelf above the previous one, and so on and so forth -- most probably into the transfinite.

However, as a matter of fact another concept has been \enquote{frequently discovered} \cite{kruskal1972theory} and proved even more relevant in diverse contexts: a \emph{well-quasi-order} (\wqo{}) is a well-founded quasi-order which contains no infinite antichain. Intuitively a well-quasi-order provides a satisfactory notion of hierarchy: as a well-founded quasi-order, it comes naturally equipped with an ordinal rank and there are up to equivalence only finitely many elements of any given rank. To prolong our metaphor, this means that, in particular, every bookshelf displays only finitely many objects -- up to equivalence.

The theory of \wqo{}s consists essentially of developing tools in order to show that certain quasi-orders suspected to be \wqo{} are indeed so. This theory exhibits a curious and interesting phenomenon: to prove that a certain quasi-order is \wqo{}, it may very well be easier to show that it enjoys a much stronger property. This observation may be seen as a motivation for considering the complicated but ingenious concept of \emph{better-quasi-order} (\bqo{}) invented by Crispin~St.~J.~A. \textcite{nash1965welltrees}. The concept of \bqo{} is weaker than that of well-ordered set but it is stronger than that of \wqo{}. In a sense, \wqo{} is defined by a single \enquote{condition}, while uncountably many \enquote{conditions} are necessary to characterise \bqo{}. Still, as Joseph B. \textcite[p.302]{kruskal1972theory} observed in 1972: \enquote{all \enquote{naturally occurring} \wqo{} sets which are known are \bqo{}}\footnote{The minor relations on finite graphs, proved to be \wqo{} by \textcite{Robertson2004325}, is to our knowledge the only naturally occurring \wqo{} which is not \emph{yet} known to be \bqo{}.}. 

\paragraph{Organisation of the paper}

In \cref{sec WQO} we give many characterisations of well-quasi-orders, all of them are folklore except maybe the one stated in \cref{prop WqoRado} which benefits from both an order-theoretical and a topological flavour. 

We make our way towards the definition of better-quasi-orders in \cref{sec BQO}. 
One of the difficulties we encountered when we began studying better-quasi-order is due to the existence of two main different definitions -- obviously equivalent to experts -- and along with them two different communities, the graph theorists and the descriptive set theorists, who only rarely cite each other in their contributions to the theory. The link between the original approach of Nash-Williams (graph theoretic) with that of Simpson (descriptive set theoretic) is merely mentioned by \textcite{argyros2005ramsey} alone. We present basic observations in order to remedy this situation in \cref{sec:MultisSeqSimpson}. Building on an idea due to \textcite{forster2003better}, we introduce the definition of better-quasi-order in a new way, using insight from one of the great contributions of descriptive set theory to better-quasi-order theory, namely the use of games and determinacy.

Finally in \cref{sec AroundBQO} we put the definition of better-quasi-order into perspective. This last section contains original material which have not been published elsewhere by the author.

\section{Well-quasi-orders}\label{sec WQO}

A reflexive and transitive binary relation $\leq$ on a set $Q$ is called a \emph{quasi-order} (qo, also preorder). As it is customary, we henceforth make an abuse of terminology and refer to the pair $(Q,\leq)$ simply as $Q$ when there is no danger of confusion. Moreover when it is necessary to prevent ambiguity we use a subscript and write $\leq_{Q}$ for the binary relation of the quasi-order $Q$.

The notion of quasi-order is certainly the most general mathematical concept of ordering. Two elements $p$ and $q$ of a quasi-order $Q$ are \emph{equivalent}, in symbols $p\equiv q$, if both $p\leq q$ and $q\leq p$ hold. It can very well happen that $p$ is equivalent to $q$ while $p$ is not equal to $q$. This kind of situation naturally arises when one considers for example the quasi-order of embeddability among a certain class of structures. Examples of pairs of structures which mutually embed into each other while being distinct, or even non isomorphic, abound in mathematics.

Every quasi-order has an \emph{associated strict relation}, denoted by $<$, defined by $p<q$ if and only if $p\leq q$ and $q\nleq p$ -- equivalently $p\leq q$ and $p\not\equiv q$. We say two elements $p$ and $q$ are \emph{incomparable}, when both $p\nleq q$ and $q\nleq p$ hold, in symbols $p\incomp q$.

A map $f:P\to Q$ between quasi-orders is \emph{order-preserving} (also isotone) if whenever $p\leq_{P} p'$ holds in $P$ we have $f(p)\leq_{Q} f(p')$ in $Q$. An \emph{embedding} is a map $f:P\to Q$ such that for every $p$ and $p'$ in $P$, $p\leq_{P} p'$ if and only if $f(p)\leq_{Q}f(p')$. Notice that an embedding is not necessarily injective.  An embedding $f:P\to Q$ is called an \emph{equivalence}\footnote{Viewing quasi-orders as categories in the obvious way, this notion of equivalence coincides with the one used in category theory.} provided it is \emph{essentially surjective}, i.e. for every $q\in Q$ there exists $p\in P$ with $q\equiv_{Q} f(p)$. We say that two quasi-orders $P$ and $Q$ are \emph{equivalent} if there exists an equivalence from $P$ to $Q$ -- by the axiom of choice this is easily seen to be an equivalence relation on the class of quasi-orders. Notice that every set $X$ quasi-ordered by the full relation $X\times X$ is equivalent to the one point quasi-order. In contrast, by an \emph{isomorphism} $f$ from $P$ to $Q$ we mean a bijective embedding $f:P\to Q$. 
Of course, a set $X$ quasi-ordered with the full relation $X\times X$ is never isomorphic to $1$ except when $X$ contains exactly one element.

In the sequel we study quasi-orders up to equivalence, namely only properties of quasi-orders which are preserved by equivalence are considered.

A quasi-order $Q$ is called a \emph{partial order} (po, also poset) provided the relation $\leq$ is antisymmetric, i.e. $p\equiv p$ implies $p=q$ -- equivalent elements are equal. Notice that an embedding between partial orders is necessarily injective. Moreover if $P$ and $Q$ are partial orders and $f:P\to Q$ is an equivalence, then $f$ is an isomorphism. We also note that in a partial order the associated strict order can also be defined by $p<q$ if and only if $p\leq q$ and $p\neq q$. 

Importantly, every quasi-order $Q$ admits up to isomorphism a unique equivalent partial order, its \emph{equivalent partial order}, which can be obtained as the quotient of $Q$ by the equivalence relation $p\equiv q$.

Even though most naturally occurring examples and constructions are only quasi-orders, one can always think of the equivalent partial order. The study of quasi-orders therefore really amounts to the study of partial orders.

\subsection{Good versus bad sequences}

We let $\omega=\{0,1,2,\ldots\}$ be the set of natural numbers. We use the set theoretic definitions $0=\emptyset$ and $n=\{0,\ldots,n-1\}$, so that the usual order on $\omega$ coincides with the membership relation. The equality and the usual order on $\omega$ give rise to the following distinguished types of sequences into a quasi-order.

\begin{definitions}Let $Q$ be a quasi-order.
\begin{enumDefn}
\item An \emph{infinite antichain} is a map $f:\omega \to Q$ such that for all $m,n\in\omega$, $m\neq n$ implies $f(m)\incomp f(n)$.
\item An \emph{infinite descending chain}, or an infinite decreasing sequence in $Q$, is a map $f:\omega \to Q$ such that for all $m,n\in\omega$, $m<n$ implies $f(m)>f(n)$.
\item A \emph{perfect sequence}, is a map $f:\omega\to Q$ such that for all $m, n\in\omega$ the relation $m\leq n$ implies $f(m)\leq f(n)$. In other words, $f$ is perfect if it is order-preserving from $(\omega,\leq)$ to $(Q,\leq)$.
\item A \emph{bad sequence} is a map $f:\omega\to Q$ such that for all $m, n\in\omega$, $m<n$ implies $f(m)\nleq f(n)$.
\item A \emph{good sequence} is a map $f:\omega\to Q$ such that there exist $m,n\in\omega$ with $m<n$ and $f(m)\leq f(n)$. Hence a sequence is good exactly when it is not bad. 
\end{enumDefn}
\end{definitions}

For any infinite subset $X$ of $\omega$, we denote by $[X]^{2}$ the set of pairs $\{x,y\}$ for distinct $x, y\in X$. When we write $\{m,n\}$ for a pair of natural numbers, we always assume it is written in increasing order ($m<n$). By Ramsey's Theorem\footnote{Nash-Williams' generalisation of Ramsey's Theorem is stated and proved as \cref{cor:NashWill}.} \cite{ramsey1930problem} whenever $[\omega]^{2}$ is partitioned into $P_{0}$ and $P_{1}$ there exists an infinite subset $X$ of $\omega$ such that either $[X]^{2}\subseteq P_{0}$, or $[X]^{2}\subseteq P_{1}$.

\begin{proposition}\label{prop:ClassicEquWqo1}
For a quasi-order $Q$, the following conditions are equivalent.
\begin{enumWqo}[series=Wqo]
\item \label{Item:1}$Q$ has no infinite descending chain and no infinite antichain;

\item \label{Item:2}there is no bad sequence in $Q$;

\item \label{Item:3}every sequence in $Q$ admits a perfect sub-sequence.
\end{enumWqo}
\end{proposition}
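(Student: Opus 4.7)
My plan is to establish the cycle \ref{Item:3} $\Rightarrow$ \ref{Item:2} $\Rightarrow$ \ref{Item:1} $\Rightarrow$ \ref{Item:3}. The first two implications are essentially definitional, while the last genuinely uses Ramsey's Theorem.

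For \ref{Item:3} $\Rightarrow$ \ref{Item:2}, I would argue by contrapositive: if $f:\omega\to Q$ is bad, then any sub-sequence $f\circ\sigma$ (with $\sigma:\omega\to\omega$ strictly increasing) is again bad, since whenever $m<n$ we have $\sigma(m)<\sigma(n)$ and hence $f(\sigma(m))\nleq f(\sigma(n))$. In particular, a bad sequence cannot admit a perfect sub-sequence, because any perfect sequence $g$ satisfies $g(0)\leq g(1)$ and is therefore good. For \ref{Item:2} $\Rightarrow$ \ref{Item:1}, I would observe that both an infinite descending chain and an infinite antichain are bad sequences: in the first case $m<n$ forces $f(m)>f(n)$ hence $f(m)\nleq f(n)$ (here using that $<$ implies $\leq$ with $\nleq$ in the other direction would need antisymmetry, but in a quasi-order $f(m)>f(n)$ is precisely $f(n)\leq f(m)$ with $f(m)\not\equiv f(n)$; in particular $f(m)\nleq f(n)$ since otherwise $f(m)\equiv f(n)$), and in the antichain case $f(m)\nleq f(n)$ holds directly.

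For the heart of the proposition, \ref{Item:1} $\Rightarrow$ \ref{Item:3}, I would apply Ramsey's Theorem twice. Given an arbitrary sequence $f:\omega\to Q$, partition $[\omega]^{2}$ into
\[
P_{0}=\{\{m,n\}\mid f(m)\leq f(n)\}\qqand P_{1}=\{\{m,n\}\mid f(m)\nleq f(n)\},
\]
and let $X\subseteq\omega$ be an infinite homogeneous set. If $[X]^{2}\subseteq P_{0}$, then enumerating $X$ in increasing order yields a perfect sub-sequence of $f$ and we are done. Otherwise $[X]^{2}\subseteq P_{1}$, and we apply Ramsey's Theorem a second time to the partition of $[X]^{2}$ into
\[
P_{0}'=\{\{m,n\}\mid f(m)>f(n)\}\qqand P_{1}'=\{\{m,n\}\mid f(m)\incomp f(n)\}
\]
(this is a genuine partition of $[X]^{2}$ precisely because $f(m)\nleq f(n)$ for all $m<n$ in $X$). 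An infinite homogeneous $Y\subseteq X$ for $P_{0}'$ enumerates an infinite descending chain, while one for $P_{1}'$ enumerates an infinite antichain; both contradict \ref{Item:1}. So the second alternative cannot arise, and a perfect sub-sequence exists.

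The argument is entirely smooth; the only mildly delicate point is the second partition, where one must check that $f(m)\nleq f(n)$ really does split into $f(m)>f(n)$ versus $f(m)\incomp f(n)$. This amounts to unfolding the definition of the associated strict order $<$ together with incomparability, and poses no real difficulty.
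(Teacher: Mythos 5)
Your proof is correct. The organization differs slightly from the paper's: the paper establishes \cref{Item:1}$\leftrightarrow$\cref{Item:2} and \cref{Item:2}$\leftrightarrow$\cref{Item:3} separately, each with a single application of Ramsey's Theorem (for \cref{Item:1}$\to$\cref{Item:2} it colours pairs of a \emph{bad} sequence by $f(m)\ngeq f(n)$ versus $f(m)\geq f(n)$ to extract an antichain or a descending chain; for \cref{Item:2}$\leftrightarrow$\cref{Item:3} it colours pairs of an arbitrary sequence by $f(m)\nleq f(n)$ versus $f(m)\leq f(n)$ to extract a bad or a perfect sub-sequence). You instead close the cycle \cref{Item:3}$\to$\cref{Item:2}$\to$\cref{Item:1}$\to$\cref{Item:3}, making the first two implications definitional and concentrating both Ramsey applications in \cref{Item:1}$\to$\cref{Item:3}; your second colouring (of $f(m)>f(n)$ versus $f(m)\incomp f(n)$ on a set where $f$ is already bad) is exactly the paper's first colouring in disguise. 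So the mathematical content is identical and only the packaging differs; the paper's decomposition has the mild advantage of isolating the reusable fact that every sequence contains a bad or a perfect sub-sequence, while yours makes the logical dependence on \cref{Item:1} most transparent. Your verification that $f(m)\nleq f(n)$ splits into $f(m)>f(n)$ versus $f(m)\incomp f(n)$ is the right point to check, and your handling of the strict relation in a quasi-order (no antisymmetry needed, since $p<q$ is defined as $p\leq q$ and $q\nleq p$) is sound.
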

\begin{proof}
\begin{descProofUp}
\item[\cref{Item:1}$\to$\cref{Item:2}] We prove the contrapositive. Suppose that $f:\omega\to Q$ is a bad sequence. Partition $[\omega]^{2}$ into $P_{0}$ and $P_{1}$ with 
\[
P_{0}=\bigl\{\{m,n\}\in[\omega]^{2}\bigm\vert f(m)\ngeq f(n)\bigr\}.
\]
By Ramsey's Theorem, there exists an infinite subset $X$ of integers with either $[X]^{2}\subseteq P_{0}$, or $[X]^{2}\subseteq P_{1}$. In the first case $f:X\to Q$ is an infinite antichain and in the second case $f:X\to Q$ is an infinite descending chain.

\item[\cref{Item:2}$\to $\cref{Item:1}] Notice that an infinite antichain and an infinite descending chain are two examples of a bad sequence.

\item[\cref{Item:2}$\leftrightarrow $\cref{Item:3}] Let $f:\omega \to Q$ be any sequence in $Q$. 
We partition $[\omega]^{2}$ in $P_{0}$ and $P_{1}$ with 
\[
P_{0}=\bigl\{\{m,n\}\in[\omega]^{2}\bigm| f(m)\nleq f(n)\bigr\}.
\]
By Ramsey's Theorem, there exists an infinite subset $X$ of integers such that $[X]^{2}\subseteq P_{0}$ or $[X]^{2}\subseteq P_{1}$. The first case yields a bad sub-sequence. The second case gives a perfect sub-sequence. \qedhere
\end{descProofUp}
\end{proof}

\begin{definition}
A quasi-order $Q$ is called a \emph{well-quasi-order} (\wqo{}) when one of the equivalent conditions of the previous proposition is fulfilled. A quasi-order with no infinite descending chain is said to be \emph{well-founded}.
\end{definition}

The notion of \wqo{} is a frequently discovered concept, for an historical account of its early development we refer the reader to the excellent article by \textcite{kruskal1972theory}.

Using \cref{prop:ClassicEquWqo1} and the Ramsey's Theorem for pairs, one easily proves the following basic closure properties of the class of \wqo{}s.

\begin{proposition}\hfill \label{prop:ClosPropWQO}
\begin{enumThm}
\item If $(Q,\leq_{Q})$ is \wqo{} and $P\subseteq Q$, then $(P,\leq_{P})$ is \wqo{}, where $p\leq_{P} p'$ if and only if $p,p'\in P \text{ and }p\leq_{Q}p'$.
\item If $(P,\leq_{P})$ and $(Q,\leq_{Q})$ are \wqo{}, then $P\times Q$ quasi-ordered by 
\[
(p,q)\leq_{P\times Q} (p',q') \SSI p\leq_{P} p' \text{ and } q\leq_{Q} q'
\]
is \wqo{}.
\item If $(P,\leq_{P})$ is a \emph{partial order} and $(Q_{p},\leq_{Q_{p}})$ is a quasi-order for every $p\in P$, the sum $\sum_{p\in P}Q_{p}$ of the $Q_{p}$ along $P$ has underlying set the disjoint union $\{(p,q)\mid p\in P \text{ and }q\in Q_{p}\}$ and is quasi-ordered by 
\[
(p,q)\leq (p',q') \SSI \text{either } p=p' \text{ and } q\leq_{Q_{p}}q' \text{, or }
p<p'.
\]
If $P$ is \wqo{} and each $Q_{p}$ is \wqo{}, then $\sum_{p\in P}Q_{p}$ is \wqo{}.
\item If $Q$ is \wqo{} and there exists a map $g:P\to Q$ such that for all $p,p'\in P$ $g(p)\leq g(p') \to p\leq p'$, then $P$ is \wqo{}.
\item If $P$ is \wqo{} and there is a surjective and monotone map $h:P\to Q$, then $Q$ is \wqo{}.
\end{enumThm}
\end{proposition}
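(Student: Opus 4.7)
The plan is to prove each of the five closure properties by using the equivalent formulations of \wqo{} from \cref{prop:ClassicEquWqo1}, in particular the no-bad-sequence formulation \cref{Item:2} and the perfect-subsequence formulation \cref{Item:3}. Parts \cref{Item:1} and \cref{Item:4} are essentially immediate: for \cref{Item:1}, any bad sequence in the subset $P$ is a fortiori a bad sequence in $Q$; for \cref{Item:4}, if $(p_n)$ were bad in $P$, the sequence $(g(p_n))$ would be bad in $Q$, since $g(p_m)\leq g(p_n)$ would force $p_m\leq p_n$ contrary to badness.

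For \cref{Item:2}, given a sequence $(p_n,q_n)_n$ in $P\times Q$, I would first extract a perfect sub-sequence on the $P$-coordinate using \cref{Item:3} applied to $P$, and then from it a perfect sub-sequence on the $Q$-coordinate using \cref{Item:3} applied to $Q$. The resulting sub-sequence is perfect in $P\times Q$ with the product quasi-order, so $P\times Q$ admits a perfect sub-sequence from any sequence and is \wqo{} by \cref{Item:3}. Part \cref{Item:5} dualises: if $(q_n)$ is any sequence in $Q$, lift each $q_n$ to some $p_n\in P$ using surjectivity, extract a perfect sub-sequence $(p_{n_k})_k$ in $P$, and observe that $(h(p_{n_k}))_k=(q_{n_k})_k$ is perfect in $Q$ by monotonicity of $h$.

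The main obstacle is \cref{Item:3}, where the partial-order hypothesis on $P$ becomes essential. Suppose for contradiction that $(p_n,q_n)_n$ is a bad sequence in $\sum_{p\in P}Q_{p}$. Unpacking the definition of the quasi-order on the sum, badness means that for all $m<n$ one has both $p_m\not<_P p_n$ and: whenever $p_m=p_n$ then $q_m\nleq_{Q_{p_m}}q_n$. Now extract, using \cref{Item:3} applied to the \wqo{} $P$, a perfect sub-sequence $p_{n_0}\leq_P p_{n_1}\leq_P\cdots$. Combined with $p_{n_k}\not<_P p_{n_{k+1}}$ and the fact that $P$ is a partial order, this forces $p_{n_k}=p_{n_{k+1}}$ for each $k$, so all the $p_{n_k}$ are equal to a common value $p^\ast\in P$. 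But then $(q_{n_k})_k$ is a bad sequence in $Q_{p^\ast}$, contradicting that $Q_{p^\ast}$ is \wqo{}.

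Antisymmetry of $P$ is what makes the argument work; without it, a perfect sub-sequence $(p_{n_k})$ could stay at the same equivalence class while still varying among genuinely distinct elements and would not collapse into a single fibre. This is precisely where the hypothesis that $P$ is a partial order -- rather than merely a quasi-order -- is used.
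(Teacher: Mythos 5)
Your proof is correct, and it follows exactly the route the paper intends: the paper gives no proof of this proposition, merely remarking that it follows easily from \cref{prop:ClassicEquWqo1} and Ramsey's theorem for pairs, which is precisely what you do via repeated extraction of perfect sub-sequences. Your treatment of the sum in part (iii), including the observation that antisymmetry of $P$ is what collapses the perfect sub-sequence into a single fibre, is the right argument and correctly locates the role of the partial-order hypothesis.
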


\subsection{Subsets and downsets}

Importantly, a \wqo{} can be characterised in terms of its subsets.

\begin{definitions}
Let $Q$ be a quasi-order.
\begin{enumDefn}
\item A subset $D$ of $Q$  is a \emph{downset}, or an \emph{initial segment}, if $q\in D$ and $p\leq q$ implies $p\in D$. For any $S\subseteq Q$, we write $\dcl S$ for the downset generated by $S$ in $Q$, i.e. the set $\{q\in Q\mid \exists p\in S \  q\leq p\}$. We also write $\dcl p$ for $\dcl\{p\}$.

We denote by $\Down(Q)$ the partial order of downsets of $Q$ under inclusion.

\item We give the dual meaning to \emph{upset}, $\upcl S$ and $\upcl q$ respectively. 

\item An upset $U$ is said to to be \emph{finitely generated}, or to admit a \emph{finite basis}, if there exists a finite $F\subseteq U$ such that $U=\upcl F$. We say that $Q$ has \emph{the finite basis property} if every upset of $Q$ admits a finite basis.

\item A downset $D\in \Down(Q)$ is said to be \emph{finitely bounded}, if there exists a finite set $F\subseteq Q$ with $D=Q\setminus \upcl F$. We let $\DownFb(Q)$ be the set of finitely bounded downsets partially ordered by inclusion.

\item We turn the power-set of $Q$, denoted $\mathcal{P}(Q)$, into a quasi-order by letting $X\leq Y$ if and only if $\forall p\in X\ \exists q\in Y\ p\leq q$, this is sometimes called the \emph{domination quasi-order}. We let $\CtblSubs(Q)$ be the the set of countable subsets of $Q$ with the quasi-order induced from $\mathcal{P}(Q)$. Since $X\leq Y$ if and only if $\dcl X \subseteq \dcl Y$, the equivalent partial order of $\mathcal{P}(Q)$ is $\Down(Q)$ and the quotient map is given by $X\mapsto \dcl X$.

\end{enumDefn}
\end{definitions}

The notion of well-quasi-order should be thought of as a generalisation of the notion of well-ordering beyond linear orders. Recall that a partial order $P$ is a linear order if for every $p$ and $q$ in $P$, either $p\leq q$ or $q\leq p$. A \emph{well-ordering} is (traditionally, the associated strict relation $<$ of) a partial order that is both linearly ordered and well-founded. 

Observe that a linearly ordered $P$ is well-founded if and only if the initial segments of $P$ are well-founded under inclusion. Considering for example the partial order $(\omega,=)$, one directly sees that a partial order $P$ can be well-founded while the initial segments of $P$ (here $\mathcal{P}(\omega)$) are not well-founded under inclusion. However a quasi-order $Q$ is \wqo{} if and only if the initial segments of $Q$ are well-founded under inclusion.

\begin{proposition}\label{prop:ClassicEquWqo2}
A quasi-order $Q$ is a \wqo{} if and only if one of the following equivalent conditions is fulfilled:
\begin{enumWqo}[Wqo]
\item \label{Item:4}$Q$ has the finite basis property,

\item \label{Item:5}$(\mathcal{P}(Q),\leq)$ is well-founded,

\item \label{Item:5b}$(\CtblSubs(Q),\leq)$ is well-founded,

\item \label{Item:6}$(\Down(Q),\subseteq)$ is well-founded,

\item \label{Item:6b} $(\DownFb(Q),\subseteq)$ is well-founded.
\end{enumWqo}
\end{proposition}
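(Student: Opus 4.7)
My plan is to prove the five conditions are each equivalent to WQO. I would start with two structural observations that prune the work. First, $\Down(Q)$ is by construction the equivalent partial order of $(\mathcal{P}(Q),\leq)$ via the quotient $X \mapsto \dcl X$, and well-foundedness is invariant under equivalence of quasi-orders, so (W5) $\Leftrightarrow$ (W6). Second, $\CtblSubs(Q) \subseteq \mathcal{P}(Q)$ and $\DownFb(Q) \subseteq \Down(Q)$ as sub-quasi-orders, so well-foundedness passes downward, giving (W5) $\Rightarrow$ (W5b) and (W6) $\Rightarrow$ (W6b). It therefore suffices to establish WQO $\Rightarrow$ (W4), WQO $\Rightarrow$ (W6), and the three converses (W4) $\Rightarrow$ WQO, (W5b) $\Rightarrow$ WQO, (W6b) $\Rightarrow$ WQO.

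For WQO $\Rightarrow$ (W4), given an upset $U$ I would consider its set $M$ of minimal elements: as an antichain in the equivalent partial order, $M$ is finite, and well-foundedness of $Q$ forces every $q \in U$ to dominate some element of $M$, whence any finite transversal of $M$ is a basis for $U$. Conversely, for (W4) $\Rightarrow$ WQO, suppose $f \colon \omega \to Q$ is bad and let $F$ be a finite basis of $\upcl \{f(n) \mid n \in \omega\}$. Pigeonhole yields some $g \in F$ with $g \leq f(n)$ for infinitely many $n$, and since $g$ itself lies in the generated upset, $f(n_{0}) \leq g$ for some $n_{0}$; choosing $n > n_{0}$ with $g \leq f(n)$ then gives $f(n_{0}) \leq f(n)$, contradicting badness.

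The remaining implications all share the template of exchanging a bad sequence for a strictly descending chain. For WQO $\Rightarrow$ (W6), given a chain $D_{0} \supsetneq D_{1} \supsetneq \ldots$ in $\Down(Q)$, I would pick $q_{n} \in D_{n} \setminus D_{n+1}$; any $m < n$ with $q_{m} \leq q_{n}$ would place $q_{n}$ in $D_{n} \subseteq D_{m+1}$ and hence, by downward closure, $q_{m}$ in $D_{m+1}$, contradicting the choice. For (W6b) $\Rightarrow$ WQO the dual construction $D_{n} = Q \setminus \upcl \{f(0),\ldots,f(n)\}$ provides finitely bounded downsets, strictly decreasing because badness places $f(n+1)$ in $D_{n} \setminus D_{n+1}$. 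And for (W5b) $\Rightarrow$ WQO, the countable sets $X_{n} = \{f(m) \mid m \geq n\}$ satisfy $X_{n} > X_{n+1}$ in the domination quasi-order, since $f(n) \in X_{n}$ has no upper bound in $X_{n+1}$, again by badness.

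The main obstacle I anticipate is the pigeonhole step for (W4) $\Rightarrow$ WQO: it hinges on the (initially easy to overlook) observation that the basis elements themselves live inside the upset they generate, which is precisely what lets one sandwich $f(n_{0}) \leq g \leq f(n)$ and close the contradiction. The remaining arguments are essentially book-keeping, organising the three converse implications around the common theme of promoting a bad sequence into a strictly descending chain of sets of the appropriate type.
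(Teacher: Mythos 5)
Your proof is correct, and it is logically complete: every one of the five conditions is tied to \wqo{} through your implications. But it is organised rather differently from the paper's. The paper proves a single cycle \cref{Item:2}${}\to{}$\cref{Item:4}${}\to{}$\cref{Item:5}${}\to{}$\cref{Item:5b}${}\to{}$\cref{Item:2} together with the branch \cref{Item:5}${}\to{}$\cref{Item:6}${}\to{}$\cref{Item:6b}${}\to{}$\cref{Item:2}, each step by contraposition; in particular it never proves \cref{Item:4}${}\Rightarrow{}$\wqo{} or \wqo{}${}\Rightarrow{}$\cref{Item:4} directly, and the equivalence \cref{Item:5}${}\leftrightarrow{}$\cref{Item:6} only emerges from going around the cycle. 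You instead get \cref{Item:5}${}\leftrightarrow{}$\cref{Item:6} immediately from the observation (made but not exploited for this purpose in the paper) that $\Down(Q)$ is the equivalent partial order of $(\mathcal{P}(Q),\leq)$ under $X\mapsto\dcl X$, and you obtain \cref{Item:5}${}\Rightarrow{}$\cref{Item:5b}, \cref{Item:6}${}\Rightarrow{}$\cref{Item:6b} by restriction to sub-quasi-orders; this makes the relationships among the four set-theoretic conditions transparent at no cost. Your treatment of the finite basis property is also genuinely different: the paper derives \cref{Item:4} from the absence of bad sequences by a dependent-choice construction, whereas you prove \wqo{}${}\Rightarrow{}$\cref{Item:4} directly via minimal elements (using the no-infinite-antichain plus well-foundedness characterisation) and \cref{Item:4}${}\Rightarrow{}$\wqo{} by the pigeonhole argument, whose crux --- that the basis $F$ lies inside the upset it generates, so $f(n_0)\leq g\leq f(n)$ --- you correctly identify. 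Your three remaining converses coincide with the paper's. One cosmetic point: the set $M$ of minimal elements of $U$ need not itself be finite (infinitely many pairwise equivalent minimal elements are possible); what is finite is its image in the equivalent partial order, which is exactly why you pass to a transversal --- your phrasing should say ``finite up to equivalence'' rather than ``finite.''
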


\begin{proof} 
\begin{descProofUp}
\item[\cref{Item:2}$ \to $\cref{Item:4}] We prove the contrapositive. Suppose $S\in\Final(Q)$ admits no finite basis. Since $\emptyset=\upcl \emptyset$, $S\neq \emptyset$. By dependent choice, we can show the existence of a bad sequence $f:\omega \to Q$. Choose $f(0)\in S$ and suppose that $f$ is defined up to some $n>0$. Since $\upcl \{f(0),\ldots f(n)\}\subset S$ we can choose some $f(n+1)$ inside $S\setminus \upcl \{f(0),\ldots f(n)\}$.

\item [\cref{Item:4}$ \to$\cref{Item:5}] We prove the contrapositive again. Suppose that $(X_{n})_{n\in\omega}$ is an infinite descending chain in $\mathcal{P}(Q)$. Then for each $n\in\omega$ we choose $q_{n}\in \dcl X_{n}\setminus \dcl X_{n+1}$. Then $\{q_{n}\mid n\in\omega\}$ has no finite basis. Indeed for all $n\in \omega$ we have $q_{n+1}\notin\upcl \{q_{i}\mid i\leq n\}$, otherwise $q_{i}\leq q_{n+1}\in \dcl X_{n+1}\subseteq \dcl X_{i+1}$ for some $i\leq n$, a contradiction.

\item[\cref{Item:5}$ \to $\cref{Item:5b}] Obvious.

\item[\cref{Item:5b}$ \to $\cref{Item:2}] By contraposition, if $(q_{n})_{n\in\omega}$ is a bad sequence in $Q$, then $P_{n}=\{q_{k}\mid n\leq k\}$ is an infinite descending chain in $\CtblSubs(Q)$ since whenever $m<n$ we have $q_{m}\in P_{m}$ and $q_{m}\nleq q_{k}$ for every $k\geq n$.

\item[\cref{Item:5}$ \to $\cref{Item:6}] By contraposition, any infinite descending chain for inclusion in $\Down(Q)$ is also an infinite descending chain in $(\mathcal{P}(Q),\leq)$.

\item[\cref{Item:6}$ \to $\cref{Item:6b}] Obvious.

\item[\cref{Item:6b}$\to$\cref{Item:2}] By contraposition, if $f:\omega\to Q$ is a bad sequence, then $n\mapsto D_{n}=Q\setminus \upcl \{f(i)\mid i\leq n\}$ is an infinite descending chain in $\DownFb(Q)$.\qedhere
\end{descProofUp}
\end{proof}

\subsection{Regular sequences}

A monotone decreasing sequence of ordinals is, by well-foundedness, eventually constant. The limit of such a sequence exists naturally, and is simply its minimum.

In general the limit of a sequence $(\alpha_i)_{i\in\omega}$ of ordinals may not exist, however any sequence of ordinals admits a limit superior. Indeed, define the sequence $\beta_i=\sup_{j\geq i}\alpha_j=\bigcup_{j\geq i} \alpha_{j}$, then $(\beta_i)_{i\in\omega}$ is decreasing and hence admits a limit.

We say that $(\alpha_i)_{i\in\omega}$ is \emph{regular} if the limit superior and the supremum $\bigcup_{i\in \omega}\alpha_{i}$ of $(\alpha_i)_{i\in\omega}$ coincide. This is equivalent to saying that for every $i\in \omega$ there exists $j>i$ with $\alpha_{i}\leq \alpha_{j}$. By induction one shows that this is in turn equivalent to saying that for all $i\in\omega$ the set $\{j\in\omega\mid i<j\text{ and } \alpha_{i}\leq\alpha_{j}\}$ is infinite.

\begin{notation}
For $n\in \omega$ and $X$ an infinite subset of $\omega$ let us denote by $X/n$ the final segment of $X$ given by $\{k\in X\mid k>n\}$. 
\end{notation}

We generalise the definition of regular sequences of ordinals to sequences in quasi-orders as follows.

\begin{definition}
Let $Q$ be a qo. A \emph{regular sequence} is a map $f:\omega\to Q$ such that for all $n\in \omega$ the set $\{k\in\omega/n\mid f(n)\leq f(k)\}$ is infinite.
\end{definition}

Here is a characterisation of \wqo{} in terms of regular sequences which exhibits another property of well-orders shared by \wqo{}s.
\begin{proposition}\label{prop: RegSubseq}
Let $Q$ be a qo. Then $Q$ is \wqo{} if and only if one of the following equivalent conditions holds:
\begin{enumWqo}[Wqo]
\item \label{Item:9}Every sequence in $Q$ admits a regular sub-sequence.
\item \label{Item:10}For every sequence $f:\omega\to Q$ there exists $n\in\omega$ such that the restriction $f:\omega/n\to Q$ is regular.
\end{enumWqo}
\end{proposition}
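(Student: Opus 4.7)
The plan is to establish a cycle of implications wqo $\Rightarrow$ \ref{Item:10} $\Rightarrow$ \ref{Item:9} $\Rightarrow$ wqo, using the characterisation of \wqo{} by the absence of bad sequences from \cref{Item:2}.

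The implication \ref{Item:10} $\Rightarrow$ \ref{Item:9} is immediate, since a tail of $f$ gives, by reindexing, a sub-sequence. For \ref{Item:9} $\Rightarrow$ \cref{Item:2}, I would argue contrapositively: a bad sequence $f$ cannot have a regular sub-sequence $f\circ g$, because regularity applied at the index $0$ supplies some $k>0$ with $f(g(0))\leq f(g(k))$, contradicting that $f$, hence $f\circ g$, is bad.

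The main step is wqo $\Rightarrow$ \ref{Item:10}. Given a sequence $f:\omega\to Q$, define the set of \emph{irregular indices}
\[
A = \bigl\{n\in\omega \bigm\vert \{k\in\omega/n \mid f(n)\leq f(k)\} \text{ is finite}\bigr\}.
\]
The restriction $f:\omega/n\to Q$ is regular precisely when $A\cap\omega/n = \emptyset$, so what must be proved is that $A$ is bounded. I prove the contrapositive: if $A$ is infinite, then $Q$ admits a bad sequence. Enumerate $A$ in increasing order as $(a_i)_{i\in\omega}$, and build by dependent choice a strictly increasing sequence $i_0 < i_1 < \cdots$ such that $\bigl(f(a_{i_j})\bigr)_{j\in\omega}$ is bad. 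Having chosen $i_0,\ldots,i_j$, the sets $B_\ell = \{k\in\omega/a_{i_\ell}\mid f(a_{i_\ell})\leq f(k)\}$ are finite for each $\ell\leq j$ because $a_{i_\ell}\in A$; therefore only finitely many indices $i$ satisfy $a_i\in\bigcup_{\ell\leq j} B_\ell$, and I can pick $i_{j+1}>i_j$ avoiding all of them. Then $f(a_{i_\ell})\not\leq f(a_{i_{j+1}})$ for every $\ell\leq j$, completing the inductive step.

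The only delicate point is the bookkeeping in the extraction step: I need to be sure that avoiding the finitely many $B_\ell$'s still leaves an index $i_{j+1}>i_j$ available, which is why I enumerate $A$ and work with the indices $i$ rather than the values $a_i$ directly. Everything else reduces to the no-bad-sequence form of \wqo{} already established in \cref{prop:ClassicEquWqo1}.
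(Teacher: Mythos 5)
Your proof is correct, and the only place where you genuinely diverge from the paper is the main implication \wqo{} $\Rightarrow$ \cref{Item:10}. The paper derives that step from \cref{Item:6}: it forms the decreasing chain of downsets $f'(n)=\dcl\{f(k)\mid n\leq k\}$ in $\Down(Q)$, lets well-foundedness force this chain to stabilise at some $n$, and checks that the tail $f:\omega/n\to Q$ is then regular. You instead work from \cref{Item:2}: you isolate the set $A$ of irregular indices and show by a dependent-choice extraction that an infinite $A$ yields a bad sequence, the key point being that each irregular index $a_{i_\ell}$ admits only a finite set $B_\ell$ of later indices dominating it, so one can always jump past $\bigcup_{\ell\leq j}B_\ell$ within the infinite set $A$; your bookkeeping there is sound. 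Both arguments work. The paper's is shorter because the stabilisation of a decreasing chain in $\Down(Q)$ packages exactly the combinatorial content you redo by hand, whereas yours is more self-contained, relying only on the no-bad-sequence characterisation of \cref{prop:ClassicEquWqo1} rather than on the downset machinery of \cref{prop:ClassicEquWqo2}. Your treatments of \cref{Item:10} $\Rightarrow$ \cref{Item:9} and of \cref{Item:9} $\Rightarrow$ \cref{Item:2} coincide with the paper's.
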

\begin{proof}
\begin{descProofUp}
\item [\cref{Item:6}$\to$\cref{Item:10}] For $f:\omega\to Q$ we let $f':\omega\to \Down(Q)$ be defined by $f'(n)=\dcl\{f(k)\mid n\leq k<\omega\}$. Then clearly if $m<n$ then $f'(m)\supseteq f'(n)$. The partial order $\Down(Q)$ being well-founded by \cref{Item:6}, there exists $n\in\omega$ such that for every $m>n$ we have $f'(n)=f'(m)$. This $n$ is as desired. Indeed, if $k>n$ then for every $l>k$ we have $f(k)\in f'(k)=f'(l)$ and so there exists $j\geq l$ with $f(k)\leq f(j)$. 

\item [\cref{Item:10}$\to$\cref{Item:9}] Obvious.

\item[\cref{Item:9}$\to$\cref{Item:2}] By contraposition, if $f:\omega\to Q$ is a bad sequence, then every sub-sequence of $f$ is bad. Clearly a bad sequence $f:\omega\to Q$ is not regular since for every $n\in\omega$ the set $\{k\in\omega/n\mid f(n)\leq f(k)\}$ is empty. Hence a bad sequence admits no regular sub-sequence. \qedhere
\end{descProofUp}
\end{proof}

\subsection{Sequences of subsets}\label{subsec SeqOfSubsets}

In this \lcnamecref{subsec SeqOfSubsets} we give a new characterisation of \wqo{}s which enjoys both a topological and an order-theoretical flavour. 

So far, we have considered $\Down(Q)$ as partially ordered set for inclusion. But $\Down(Q)$ also admits a natural topology which turns it into a compact Hausdorff $0$-dimensional space. Consider $Q$ as a discrete topological space, and form the product space $2^{Q}$, whose underlying set is identified with $\mathcal{P}(Q)$. This product space, sometimes called \emph{generalised Cantor space}, admits as a basis the clopen sets of the form
\[
N(F,G)=\{X\subseteq Q\mid F\subseteq X \text{ and } X\cap G=\emptyset\}, 
\]
for finite subsets $F,G$ of $Q$. For $q\in Q$, we write $\ClUp{q}$ instead of $N(\{q\},\emptyset)$ for the clopen set $\{X\subseteq Q\mid q\in X\}$.  Note that $\ClUp{q}^{\complement}=N(\emptyset,\{q\})$.

Notice that $\Down(Q)$ is an intersection of clopen sets,
\[\Down(Q)=\bigcap_{p\leq q} \ClUp{q}^{\complement} \cup \ClUp{p},\]
hence $\Down(Q)$ is closed in $2^{Q}$ and therefore compact.

Now recall that for every sequence $(E_{n})_{n\in\omega}$ of subsets of $Q$ we have the usual relations
\begin{equation}\label{eq IntLimInfLimSupUnion}
\bigcap_{n\in\omega} E_{n}\subseteq \bigcup_{i\in \omega}\bigcap_{j\geq i}E_{j}\subseteq \bigcap_{i\in \omega}\bigcup_{j\geq i}E_{j}\subseteq \bigcup_{n\in \omega} E_{n}.
\end{equation}

Moreover the convergence of sequences in $2^{Q}$ can be expressed by means of a \enquote{$\liminf=\limsup$} property. 

\begin{fact}\label{fac liminf=limsup}
A sequence $(E_{n})_{n\in\omega}$ converges to $E$ in $2^{Q}$ if and only if
\[\bigcup_{i\in \omega}\bigcap_{j\geq i}E_{j}=\bigcap_{i\in \omega}\bigcup_{j\geq i}E_{j}=E.\]
\end{fact}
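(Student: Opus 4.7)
The plan is to reduce convergence in $2^{Q}$ to pointwise convergence of characteristic functions, then translate the latter into the $\liminf = \limsup$ condition.

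First I would recall that the product topology on $2^{Q}$ is exactly the topology of pointwise convergence: a basic open neighbourhood of $E$ is determined by fixing the values of $\chi_{E}$ on some finite set $F \cup G \subseteq Q$. Concretely, $(E_{n})_{n\in\omega}$ converges to $E$ in $2^{Q}$ if and only if for every $q \in Q$ there exists $N \in \omega$ such that for all $n \geq N$, $q \in E_{n} \ssi q \in E$. Since $2 = \{0,1\}$ is discrete, this amounts to saying that the sequence $(\chi_{E_{n}}(q))_{n\in\omega}$ is eventually constant with value $\chi_{E}(q)$.

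Next I would unfold the set-theoretic meaning of the two relevant sets in terms of membership of a fixed element $q \in Q$:
\[
q \in \bigcup_{i\in\omega}\bigcap_{j \geq i} E_{j} \ssi \exists i\, \forall j \geq i\ q \in E_{j} \ssi q \in E_{n} \text{ for all but finitely many } n,
\]
and
\[
q \in \bigcap_{i\in\omega}\bigcup_{j \geq i} E_{j} \ssi \forall i\, \exists j \geq i\ q \in E_{j} \ssi q \in E_{n} \text{ for infinitely many } n.
\]

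For the forward direction, assuming $E_{n} \to E$, fix $q \in Q$. If $q \in E$ then eventually $q \in E_{n}$, so $q \in \bigcup_{i}\bigcap_{j \geq i} E_{j}$; if $q \notin E$ then eventually $q \notin E_{n}$, so $q \notin \bigcap_{i}\bigcup_{j \geq i} E_{j}$. Combined with the trivial inclusion $\bigcup_{i}\bigcap_{j \geq i} E_{j} \subseteq \bigcap_{i}\bigcup_{j \geq i} E_{j}$ from \eqref{eq IntLimInfLimSupUnion}, this forces both sets to equal $E$. For the converse, if $\bigcup_{i}\bigcap_{j\geq i} E_{j} = \bigcap_{i}\bigcup_{j\geq i} E_{j} = E$, then for each $q \in Q$: either $q \in E$, in which case $q$ belongs eventually to $E_{n}$; or $q \notin E$, in which case $q$ fails to belong to $E_{n}$ for only finitely many $n$ (otherwise it would lie in the $\limsup$). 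In both cases $\chi_{E_{n}}(q)$ is eventually $\chi_{E}(q)$, so $E_{n} \to E$ in $2^{Q}$.

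There is no real obstacle here; the fact is essentially a translation between topological convergence in a product of discrete two-point spaces and the classical $\liminf$/$\limsup$ of sets, and everything follows once the pointwise-convergence characterisation of the product topology is made explicit.
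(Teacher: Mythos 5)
Your proof is correct and takes essentially the same route as the paper's: the paper verifies convergence directly against the basic clopen sets $N(F,G)$, which is exactly your pointwise-convergence criterion restricted to the finitely many coordinates in $F\cup G$. Both arguments also close the loop in the same way, by combining the coordinatewise observations with the standing inclusion $\bigcup_{i\in\omega}\bigcap_{j\geq i}E_{j}\subseteq\bigcap_{i\in\omega}\bigcup_{j\geq i}E_{j}$.
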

\begin{proof}
Suppose that $E=\bigcup_{i\in \omega}\bigcap_{j\geq i}E_{j}=\bigcap_{i\in \omega}\bigcup_{j\geq i}E_{j}$. We show that $E_{n}\to E$. Let $F,G$ be finite subsets of $Q$ with $E\in N(F,G)$. Since $E=\bigcup_{i\in \omega}\bigcap_{j\geq i}E_{j}$ and $F$ finite, $F\subseteq E_{j}$ for all sufficiently large $j$. Since $E=\bigcap_{i\in \omega}\bigcup_{j\geq i}E_{j}$ and $G$ is finite, $G\cap E_{j}=\emptyset$ for all sufficiently large $j$. It follows that $E_{j}\in N(F,G)$ for all sufficiently large $j$, whence $(E_{n})_{n}$ converges to $E$.

Conversely, assume that $E_{n}$ converges to some $E$ in $2^{Q}$. If $q$ belongs to $E$ -- i.e. $E\in \ClUp{q}$ -- then $q\in E_{j}$ for all sufficiently large $j$ and thus $q\in \bigcup_{i\in \omega}\bigcap_{j\geq i}E_{j}$. And if $q\notin E$, i.e. $E\notin \ClUp{q}$, then $q\notin E_{j}$ for all sufficiently large $j$ and thus $q\notin \bigcap_{i\in\omega}\bigcup_{j\geq i}E_{j}$. Therefore by \cref{eq IntLimInfLimSupUnion} it follows that $E=\bigcup_{i\in \omega}\bigcap_{j\geq i}E_{j}=\bigcap_{i\in \omega}\bigcup_{j\geq i}E_{j}$.
\end{proof}

Observe that if $(q_{n})_{n\in\omega}$ is a perfect sequence in a qo $Q$, then for every $q\in Q$ if $q \leq q_{m}$ for some $m$, then $q\leq q_{n}$ holds for all $n\geq m$. Therefore by \cref{eq IntLimInfLimSupUnion} we have
\[
\bigcup_{m\in\omega}\bigcap_{n\geq m} \dcl q_{n}=\bigcup_{n\in\omega}\dcl q_{n},
\]
whence $(\dcl q_{n})_{n\in\omega}$ converges to $\dcl \{q_{n}\mid n\in\omega\}$ in $2^{Q}$ by \cref{fac liminf=limsup}. On the contrary no bad sequence $(q_{n})_{n\in\omega}$ converges towards $\dcl\{q_{n}\mid n\in\omega\}$, since for example $q_{0}$ does not belong to $\bigcup_{i\in \omega}\bigcap_{j\geq i}\dcl q_{j}$. We have obtained the following:  

\begin{fact}\label{fac RadoPrincipWqo}
Let $Q$ be a qo. 
\begin{enumThm}
\item $Q$ is \wqo{} if and only if for every sequence $(q_{n})_{n\in\omega}$ there exists $N\in\infSub{\omega}$ such that $(\dcl q_{n})_{n\in N}$ converges to $\dcl\{q_{n}\mid n\in N\}$ in $\Down(Q)$.
\item If $Q$ is \wqo{} and $(\dcl q_{n})_{n\in\omega}$ converges to some $D$ in $\Down(Q)$, then there is some $N\in\infSub{\omega}$ such that $D=\dcl \{q_{n}\mid n\in N\}$.
\end{enumThm}
\end{fact}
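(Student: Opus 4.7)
For part (i), the two observations made immediately before the statement do essentially all the work. For the forward direction, suppose $Q$ is \wqo{} and let $(q_{n})_{n\in\omega}$ be any sequence in $Q$. By the equivalence \cref{Item:3} of \cref{prop:ClassicEquWqo1}, there is some $N\in\infSub{\omega}$ along which the subsequence $(q_{n})_{n\in N}$ is perfect, and the first observation above then yields that $(\dcl q_{n})_{n\in N}$ converges to $\dcl\{q_{n}\mid n\in N\}$ in $\Down(Q)$. Note that the observation was phrased for $\omega$-indexed sequences, but its proof applies verbatim when $\omega$ is replaced by an arbitrary $N\in\infSub{\omega}$: perfectness of $(q_n)_{n\in N}$ ensures that $\dcl q_n$ is eventually constant \emph{as a function of $n\in N$} on every principal downset, hence the $\liminf$ and $\limsup$ agree with $\dcl\{q_n\mid n\in N\}$.

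For the reverse direction, I argue by contraposition. If $Q$ is not \wqo{}, pick a bad sequence $(q_{n})_{n\in\omega}$ using \cref{Item:2}. The defining condition $m<n \Rightarrow q_{m}\nleq q_{n}$ is clearly inherited by any subsequence indexed by some $N\in\infSub{\omega}$, so $(q_n)_{n\in N}$ is again bad. By (the $N$-indexed version of) the second observation, $(\dcl q_{n})_{n\in N}$ cannot converge to $\dcl\{q_{n}\mid n\in N\}$: with $n_{0}=\min N$, the element $q_{n_{0}}$ belongs to $\dcl\{q_{n}\mid n\in N\}$ but fails to lie in $\bigcup_{i}\bigcap_{j\geq i, j\in N}\dcl q_{j}$, since $q_{n_{0}}\nleq q_{j}$ for every $j\in N$ with $j>n_{0}$.

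For part (ii), the key additional input is that $\Down(Q)$, being a closed subspace of the compact Hausdorff space $2^{Q}$, is itself Hausdorff; in particular, every subsequence of a convergent sequence has the same limit. So given $Q$ \wqo{} and $(\dcl q_{n})_{n\in\omega}\to D$, apply part (i) to $(q_{n})_{n\in\omega}$ to obtain some $N\in\infSub{\omega}$ with $(\dcl q_{n})_{n\in N}\to \dcl\{q_{n}\mid n\in N\}$. Since this subsequence must also converge to $D$, uniqueness of limits forces $D=\dcl\{q_{n}\mid n\in N\}$.

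I do not foresee a serious obstacle: the content has been essentially prepared in the discussion preceding the statement, and the argument reduces to combining \cref{Item:3} (perfect subsequences) with the observations about convergence of downward closures. The only points deserving a sentence of justification in the final write-up are the routine inheritance of both perfectness and badness under restriction to an infinite subset of indices.
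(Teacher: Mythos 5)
Your proposal is correct and follows essentially the same route as the paper, which presents this Fact as an immediate consequence of the two preceding observations (perfect sequences' principal downsets converge to the downset of their union; bad sequences' do not) combined with the Ramsey characterisation of \wqo{}. Your explicit appeal to uniqueness of limits in the Hausdorff space $2^{Q}$ for part (ii) is exactly the step the paper leaves implicit, and your remarks on passing from $\omega$-indexed to $N$-indexed sequences are routine but accurate.
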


Actually more is true, thanks to the following ingenious observation made by Richard Rado in the body of a proof in \parencite{rado1954partial}.

\begin{lemma}[Rado's trick]\label{radoLem}
Let $Q$ be a \wqo{} and let $(D_{n})_{n\in\omega}$ be a sequence in $\Down(Q)$. Then there exists an infinite subset $N$ of $\omega$ such that
\[
\bigcup_{i\in N}\bigcap_{j\in N/i}D_{j}=\bigcup_{n\in N} D_{n},
\]
and so the sub-sequence $(D_{j})_{j\in N}$ converges to $\bigcup_{n\in N}D_{n}$ in $\Down(Q)$.
\end{lemma}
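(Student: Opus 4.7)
My plan is to combine the compactness of $\Down(Q)$ (established just above as a closed subset of $2^Q$) with the finite basis characterisation of \wqo{} (\cref{Item:4}). Compactness will yield a convergent sub-sequence, but its limit may be strictly contained in $\bigcup_{n\in N_0}D_n$; the finite basis property will then let us excise this ``excess'' by deleting only finitely many further indices.

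First I would extract an infinite $N_0\subseteq \omega$ along which $(D_n)_{n\in N_0}$ converges to some $D\in \Down(Q)$. By \cref{fac liminf=limsup}, $D=\bigcup_{i\in N_0}\bigcap_{j\in N_0/i}D_j=\bigcap_{i\in N_0}\bigcup_{j\in N_0/i}D_j$. Set $U:=\bigcup_{n\in N_0}D_n$, itself a downset with $D\subseteq U$, and let $E:=U\setminus D$. Since $D$ is a downset, $E$ is an \emph{upset} of the sub-qo $U$: if $e\in E$ and $u\in U$ with $u\geq e$, then $u\in D$ would imply $e\in D$, a contradiction. Moreover, every $q\in E$ satisfies $q\notin D=\limsup_{N_0}D_n$, which means there is some $i\in N_0$ with $q\notin D_j$ for every $j\in N_0/i$; in particular $A_q:=\{n\in N_0 : q\in D_n\}$ is finite.

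Here \wqo{} plays the crucial role: since $U\subseteq Q$ is itself \wqo{} by \cref{prop:ClosPropWQO}, the upset $E$ of $U$ admits a \emph{finite} basis $\{e_1,\ldots, e_k\}$ by \cref{Item:4}. Let $N:=N_0\setminus (A_{e_1}\cup \cdots\cup A_{e_k})$, which is infinite as the complement of a finite set in $N_0$. I claim that $D_n\subseteq D$ for every $n\in N$: if some $q\in E$ were in $D_n$, then $q\geq e_l$ for some $l$, and the downset property of $D_n$ would force $e_l\in D_n$, contradicting $n\notin A_{e_l}$. Therefore $\bigcup_{n\in N}D_n\subseteq D$; combined with the standard inclusions $D=\liminf_{N_0}D_n\subseteq \liminf_N D_n\subseteq \bigcup_{n\in N}D_n$, the entire chain collapses and $(D_n)_{n\in N}$ converges to $\bigcup_{n\in N}D_n=D$, as desired. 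The main conceptual obstacle is recognising that \wqo{} must be applied to the sub-qo $U$ rather than to $Q$ directly, so as to bound the ``excess'' $E$ by only finitely many witnesses; a minor technical point is the first step, which uses sequential compactness of $\Down(Q)$---immediate when $Q$ is countable and otherwise needing a separate argument.
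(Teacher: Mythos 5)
Your argument takes a genuinely different route from the paper's. The paper proceeds by contradiction and pure order theory: assuming no infinite $N$ works, it recursively shrinks $\omega$ to infinite sets $N_k$ while extracting witnesses $q_k$, so that $E_k=\bigcup_{j\in N_k}D_j$ becomes a strictly descending chain in $\Down(Q)$, contradicting its well-foundedness; no topology or compactness is invoked. Your route --- first pass to a convergent subsequence, then use the finite basis property of the sub-\wqo{} $U=\bigcup_{n\in N_0}D_n$ to delete the finitely many indices responsible for the excess $E=U\setminus D$ --- is attractive, and from the second step onward it is entirely correct: $E$ is indeed an upset of $U$, each $A_{e_l}$ is finite because $e_l\notin\bigcap_{i\in N_0}\bigcup_{j\in N_0/i}D_j$, the deletion argument shows $D_n\subseteq D$ for $n\in N$, and the chain of inclusions collapses as you say.

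The genuine gap is your first step, and it is not as minor as you suggest. The lemma is stated for an arbitrary \wqo{} $Q$, and $2^{Q}$ is compact but in general \emph{not} sequentially compact when $Q$ is uncountable (already $2^{\mathbb{R}}$ contains sequences with no convergent subsequence), so ``extract a convergent subsequence by compactness'' is unavailable as stated. The gap can be filled, but only by invoking \wqo{}-ness once more: by the finite basis property each $D_n$ equals $Q\setminus\upcl F_n$ for a finite $F_n\subseteq Q$; setting $F=\bigcup_n F_n$, whether $q\in D_n$ depends only on the downset $\dcl q\cap F$ of the countable \wqo{} $F$; since every downset of $F$ is itself the complement of a finitely generated upset, $\Down(F)$ is countable, so the maps $q\mapsto\chi_{D_n}(q)$ factor through a countable quotient of $Q$ and a diagonal argument over representatives of the countably many types produces the convergent subsequence (the limit is a downset because $\Down(Q)$ is closed). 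With that supplement your proof is complete; without it, it establishes the lemma only for countable $Q$. The paper's elementary contradiction argument sidesteps these cardinality issues entirely, which is presumably why it is the standard one.
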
\begin{proof}
Towards a contradiction suppose that for all infinite $N\subseteq \omega$ we have
\begin{equation}\label{eq Rado'sTrick}
\bigcup_{i\in N}\bigcap_{ j\in N/i}D_{j}\subset \bigcup_{n\in N} D_{n}.
\end{equation}
We define an infinite descending chain $(E_i)_{i\in\omega}$ in $\Down(Q)$. But to do so we recursively define a sequence $(N_k)_{k\in\omega}$ of infinite subsets of $\omega$ and a sequence $(q_{k})_{k\in \omega}$ in $Q$ such that
\begin{enumProof}
\item $N_{0}=\omega$ and $N_{k}\supseteq N_{k+1}$ for all $k\in \omega$.
\item $q_{k}\in \bigcup_{j\in N_{k}}D_{j}$ and $q_{k}\notin \bigcup_{j\in N_{k+1}} D_{j}$.
\end{enumProof}
Suppose we have defined $N_{0},\ldots,N_{k}$ and $q_{0},\ldots q_{k-1}$. By \cref{eq Rado'sTrick} we have 
\[
\bigcup_{n\in N_{k}}D_{n}\nsubseteq \bigcup_{i\in N_{k}}\bigcap_{j\in N_{k}/i} D_{j},\]
so we can pick $n_{0}\in N_{k}$ and $q_{k}\in D_{n_{0}}$ such that $q_{k}\notin \bigcup_{i\in N_{k}}\bigcap_{j\in N_{k}/i}D_{j}$. Then for all $i$ in $N_{k}$ let $j_{i}\in N_{k}/i$ be minimal such that $q_{k}\notin D_{j_{i}}$. Setting $n_{1}=j_{n_{0}}$ and $n_{i+1}=j_{n_{i}}$, we obtain an infinite set $N_{k+1}=\{n_{1},n_{2},\ldots\}$ which satisfies 
\[q_{k}\in D_{n_{0}}\subseteq \bigcup_{j\in N_{k}}D_{j} \quad \text{and}\quad q_{k}\notin \bigcup_{j\in N_{k+1}} D_{j}.\]

Now we define $E_{k}=\bigcup_{j\in N_{k}}D_{j}$. The sequence $(E_k)_{k\in\omega}$ is an infinite descending chain in $\Down(Q)$, contradicting the fact that $Q$ is \wqo{}.

For the second statement, observe that if $N$ is an infinite subset of $\omega$ satisfying the statement of the lemma, then by \cref{eq IntLimInfLimSupUnion} we have
\[
\bigcup_{i\in N}\bigcap_{j\in N/i}D_{j}= \bigcap_{i\in N}\bigcup_{j\in N/i}D_{j}= \bigcup_{n\in N} D_{n},
\]
and so by \cref{fac liminf=limsup} we get that $(D_{j})_{j\in N}$ converges to $\bigcup_{n\in N} D_{n}$ in $\Down(Q)$.
\end{proof}

Hence if $Q$ is \wqo{}, then every sequence in $\Down(Q)$ admits a sub-sequence which converges to its union. Of course the converse also holds.
 
\begin{lemma}
If $(D_{n})_{n\in\omega}$ is an infinite descending chain in $\Down(Q)$, then there is no infinite subset $N$ of $\omega$ such that $\bigcup_{n\in N} D_{n}=\bigcup_{i\in N}\bigcap_{j\in N/i}D_{j}$.
\end{lemma}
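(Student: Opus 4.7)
The idea is to exploit that the chain $(D_n)_{n\in\omega}$ is strictly descending, which makes both sides of the claimed equality easy to compute explicitly for any infinite $N\subseteq\omega$, and then to observe that they cannot coincide.

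First I would fix an arbitrary infinite $N\subseteq\omega$ and set $m=\min N$. Since $(D_n)$ is descending for inclusion, the union $\bigcup_{n\in N} D_n$ collapses to its largest term, namely $D_m$. For the expression on the other side, the key point is that for each $i\in N$ the set $N/i=\{j\in N\mid j>i\}$ is still infinite (as $N$ is), hence cofinal in $\omega$; combined with the descending property this gives $\bigcap_{j\in N/i} D_j = \bigcap_{n\in \omega} D_n$ for every $i\in N$. Writing $D^{*}=\bigcap_{n\in \omega} D_n$, it follows that $\bigcup_{i\in N}\bigcap_{j\in N/i} D_j = D^{*}$.

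The conclusion then comes from strictness of the chain: by the definition of an infinite descending chain recalled at the beginning of the section, $D_{m+1}\subsetneq D_m$. Since $D^{*}\subseteq D_{m+1}$, we get $D^{*}\subsetneq D_m$, i.e. $\bigcup_{i\in N}\bigcap_{j\in N/i} D_j \neq \bigcup_{n\in N} D_n$, as desired.

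I do not foresee any real obstacle here; the only subtle point is to make sure that "infinite descending chain" is interpreted strictly, so that $D_{m+1}$ is a \emph{proper} subset of $D_m$ and the final step of the argument goes through. This is indeed the convention used earlier in the paper.
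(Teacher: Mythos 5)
Your proof is correct and rests on the same idea as the paper's: the union collapses to the first term $D_{\min N}$ of the strictly descending chain, while the $\liminf$ is contained in all strictly later terms, so strictness at the first step separates the two sides. The only cosmetic difference is that you compute both sides exactly (getting $D_m$ versus $\bigcap_{n\in\omega}D_n$), whereas the paper exhibits a single witness $q\in D_0\setminus D_1$ after reducing to $N=\omega$; your reading of \enquote{infinite descending chain} as strict matches the paper's definition, so the argument goes through.
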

\begin{proof}
Since any sub-sequence of an infinite descending chain is again an infinite descending chain, it is enough to show that if $(D_{n})_{n\in\omega}$ is an infinite descending chain in $\Down(Q)$ then $\bigcup_{n\in \omega} D_{n} \nsubseteq \bigcup_{i\in \omega}\bigcap_{j\in \omega/i}D_{j}$. Pick any $q\in D_{0}\setminus D_{1}$. Then since $D_{j}\subseteq D_{1}$ for all $j\geq 1$ and $D_{1}$ is a downset, we get $q\notin D_{j}$ for all $j\geq 1$. It follows that $q\notin \bigcup_{i\in \omega}\bigcap_{j\in \omega/i}D_{j}$.
\end{proof}

This leads to our last characterisation of \wqo{}:
\begin{proposition}\label{prop WqoRado}
Let $Q$ be a qo. Then $Q$ is \wqo{} if and only if
\begin{enumWqo}[Wqo]
\item Every sequence $(D_{n})_{n\in\omega}$ in $\Down(Q)$ admits a sub-sequence $(D_{n})_{n\in N}$ which converges to $\bigcup_{n\in N}D_{n}$.
\end{enumWqo}
\end{proposition}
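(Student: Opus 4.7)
The plan is to observe that the two lemmas immediately preceding the proposition already do almost all of the work, so the proof is essentially a packaging argument combined with the characterisation \cref{Item:6} from \cref{prop:ClassicEquWqo2}.

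For the forward implication, I would simply invoke \cref{radoLem} (Rado's trick): assuming $Q$ is \wqo{}, given any sequence $(D_{n})_{n\in\omega}$ in $\Down(Q)$, the lemma directly produces an infinite $N\subseteq \omega$ such that $(D_{n})_{n\in N}$ converges to $\bigcup_{n\in N}D_{n}$ in $\Down(Q)$. No further argument is needed here.

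For the converse, I would argue by contraposition using the equivalence \cref{Item:2}$\leftrightarrow$\cref{Item:6} from \cref{prop:ClassicEquWqo2}. Suppose $Q$ is not \wqo{}; then $(\Down(Q),\subseteq)$ is not well-founded, so there exists an infinite descending chain $(D_{n})_{n\in\omega}$ in $\Down(Q)$. By the lemma immediately preceding the proposition, no infinite $N\subseteq \omega$ satisfies $\bigcup_{n\in N}D_{n}=\bigcup_{i\in N}\bigcap_{j\in N/i}D_{j}$, and so by \cref{fac liminf=limsup} together with the chain of inclusions \cref{eq IntLimInfLimSupUnion}, no subsequence $(D_{n})_{n\in N}$ converges to $\bigcup_{n\in N}D_{n}$. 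This is exactly the failure of the stated condition for this sequence, which completes the contrapositive.

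There is no real obstacle, since both directions reduce to results already proved; the only thing to be careful about is to make the translation between \enquote{convergence to $\bigcup_{n\in N}D_{n}$ in $\Down(Q)$} and the $\liminf = \limsup$ characterisation precise, which is handled by \cref{fac liminf=limsup} applied within the ambient space $2^{Q}$ (noting that $\Down(Q)$ is closed in $2^{Q}$, so convergence in $\Down(Q)$ coincides with convergence in $2^{Q}$ whenever the limit lies in $\Down(Q)$).
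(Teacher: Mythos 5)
Your proof is correct and follows exactly the route the paper intends: the forward direction is \cref{radoLem} verbatim, and the converse is the preceding lemma combined with \cref{Item:6} of \cref{prop:ClassicEquWqo2} and \cref{fac liminf=limsup} (the paper leaves this packaging implicit, stating the proposition as a direct consequence of the two lemmas). No gaps.
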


\section{Better-quasi-orders}\label{sec BQO}

\subsection{Towards better}\label{subsec TowardsBetter}

As we have seen in \cref{prop:ClassicEquWqo2} a quasi-order is \wqo{} if and only if $\mathcal{P}(Q)$ is well-founded if and only if $\Down(Q)$ is well founded. The first example of a \wqo{} whose powerset contains an infinite antichain was identified by Richard Rado. This \wqo{} is the starting point of the journey towards the stronger notion of better-quasi-order.

\begin{example}[\parencite{rado1954partial}]\label{ex:RadoPoset}
Rado's partial order $\Rado$ is the set $[\omega]^{2}$, of pairs of natural numbers, partially ordered by (cf. \cref{figRadoPoset}):
\[
\{m,n\}\leq \{m',n'\} \SSI \begin{cases}
\text{$m=m'$ and $n\leq n'$, or}\\
n<m'.
\end{cases} 
\]

\begin{figure}

\begin{center}
\includegraphics{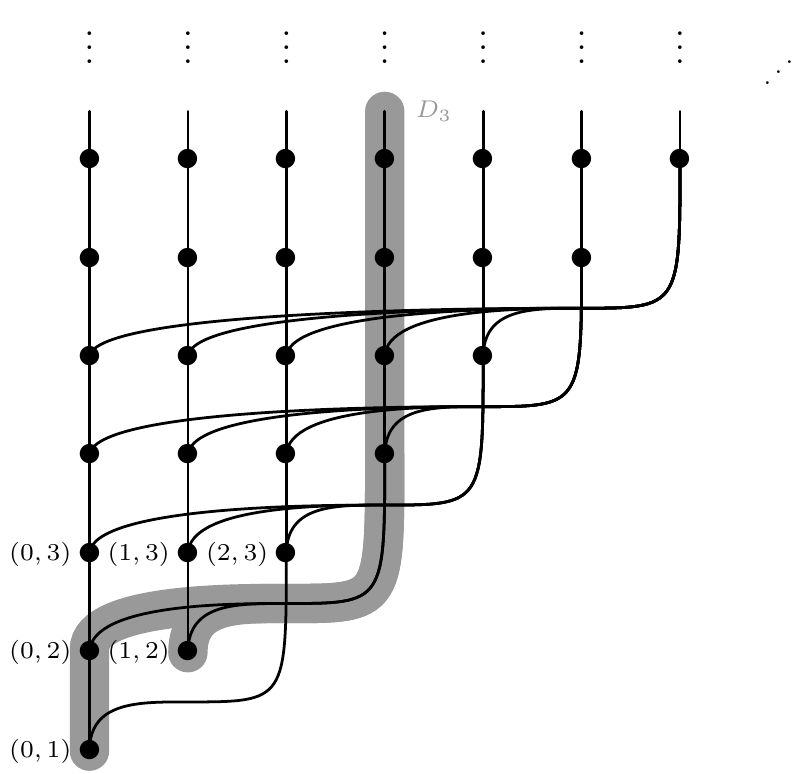}
\end{center}
\caption{Rado's poset $\Rado$.}
\label{figRadoPoset}

\end{figure}

The po $\Rado$ is \wqo{}. To see this, consider any map $f:\omega\to  [\omega]^{2}$ and let $f(n)=\{f_{0}(n),f_{1}(n)\}$ for all $n\in\omega$. Now if $f_{0}$ is unbounded, then there exists $n>0$ with $f_{1}(0)<f_{0}(n)$ and so $f(0)\leq f(n)$ in $\Rado$ by the second clause.
If $f_{0}$ is bounded, let us assume by going to a sub-sequence if necessary, that $f_{1}:\omega\to \omega$ is perfect. Then there exist $m$ and $n$ with $m<n$ and $f_{0}(m)=f_{0}(n)$ and we have $f_{1}(m)\leq f_{1}(n)$, so $f(m)\leq f(n)$ in $\Rado$ by the first clause. In both cases we find that $f$ is good, so $\Rado$ is \wqo{}.

However the map $n\mapsto D_{n}=\dcl\{\{n,l\}\mid n<l\}$ is a bad sequence (in fact an infinite antichain) inside $\Down(\Rado)$. Indeed whenever $m<n$ we have $\{m,n\}\in D_{m}$ while $\{m,n\}\notin D_{n}$, and so $D_{m}\nsubseteq D_{n}$. 
\end{example}

One natural question is now: What witnesses in a given quasi-order $Q$ the fact that $\mathcal{P}(Q)$ is not \wqo{}? It cannot always be a bad sequence, that is what the existence of Rado's partial order tells us. But then what is it?

To see this suppose that $(P_{n})_{n\in\omega}$ is a bad sequence in $\mathcal{P}(Q)$. Fix some $m\in \omega$. Then whenever $m<n$ we have $P_{m}\nsubseteq \dcl P_{n}$ and we can choose a witness $q\in P_{m}\setminus \dcl P_{n}$. But of course in general there is no single $q\in P_{m}$ that witnesses $P_{m}\nsubseteq \dcl P_{n}$ for all $n>m$. So we are forced to pick a sequence $f_{m}:\omega/m\to Q$, $n\mapsto q^{n}_{m}$ of witnesses:
\[
q^{n}_{m}\in P_{m} \quad \text{and}\quad q^{n}_{m}\notin \dcl P_{n}, \qquad \text{$n\in\omega/m$}.
\]
Bringing together all the sequences $f_{0},f_{1},\ldots$, we obtain a \emph{sequence of sequences}, naturally indexed by the set $[\omega]^{2}$ of pairs of natural numbers, 
\begin{align*}
f:[\omega]^{2}&\longrightarrow Q\\ 
\{m,n\}&\longmapsto f_{m}(n)=q^{n}_{m}.
\end{align*}
By our choices this sequence of sequences satisfies the following condition: 
\[
\forall m,n,l\in\omega \quad m<n<l \to q^{n}_{m}\nleq q^{l}_{n}.
\]
Indeed, suppose towards a contradiction that for $m<n<l$ we have $q_{m}^{n}\leq q_{n}^{l}$. Since $q_{n}^{l}\in P_{n}$ we would have $q_{m}^{n}\in \dcl P_{n}$, but we chose $q_{m}^{n}$ such that $q_{m}^{n}\notin \dcl P_{n}$.

Let us say that a sequence of sequences $f:[\omega]^{2}\to Q$ is \emph{bad} if for every $m,n,l\in\omega$, $m<n<l$ implies $f(\{m,n\})\nleq f(\{n,l\})$. We have come to the following.

\begin{proposition}\label{prop2BQOssiDownQwqo}
Let $Q$ be a qo. Then $\mathcal{P}(Q)$ is \wqo{} if and only if there is no bad sequence of sequences into $Q$.
\end{proposition}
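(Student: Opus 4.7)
The proposition is a biconditional. The excerpt immediately preceding it essentially carries out the $(\Rightarrow)$ direction in passing, so my plan is to formalize that argument and then treat the converse, which is shorter.

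\textbf{Forward direction ($\mathcal{P}(Q)$ is \wqo{} $\Rightarrow$ no bad sequence of sequences).} I would argue the contrapositive. Suppose $(P_n)_{n \in \omega}$ is a bad sequence in $\mathcal{P}(Q)$, i.e.\ $P_m \nleq P_n$ whenever $m < n$. Unfolding the domination quasi-order, for each pair $m < n$ there exists some witness $q \in P_m$ with $q \notin \dcl P_n$. Using dependent choice I pick, for every pair $\{m,n\} \in [\omega]^2$, some $q^{n}_{m} \in P_{m} \setminus \dcl P_{n}$, and set
\[
f : [\omega]^{2} \longrightarrow Q, \qquad f(\{m,n\}) = q^{n}_{m}.
\]
I then need to check that $f$ is bad, i.e.\ that $m<n<l$ implies $f(\{m,n\}) \nleq f(\{n,l\})$. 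But $f(\{n,l\}) = q^{l}_{n} \in P_{n}$, so if we had $f(\{m,n\}) \leq f(\{n,l\})$ then $q^{n}_{m} \in \dcl P_{n}$, contradicting the choice of $q^{n}_{m}$.

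\textbf{Reverse direction ($\Leftarrow$).} Again by contraposition, I assume a bad sequence of sequences $f : [\omega]^{2} \to Q$ is given and construct a bad sequence in $\mathcal{P}(Q)$. The natural choice is
\[
P_{m} = \{\, f(\{m,n\}) \mid n \in \omega,\ n>m \,\} \subseteq Q, \qquad m \in \omega.
\]
To see this is bad, fix $m < n$; I claim $P_{m} \nleq P_{n}$ witnessed by the element $p := f(\{m,n\}) \in P_{m}$. Any element of $P_{n}$ has the form $f(\{n,l\})$ for some $l > n$, and by badness of $f$ we have $f(\{m,n\}) \nleq f(\{n,l\})$, i.e.\ $p \nleq q$ for every $q \in P_{n}$. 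Hence $P_{m} \nleq P_{n}$ in the domination quasi-order, proving $(P_{m})_{m \in \omega}$ is bad in $\mathcal{P}(Q)$, so $\mathcal{P}(Q)$ is not \wqo{}.

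\textbf{Remarks on the difficulty.} There is no real obstacle here: both implications are a direct unfolding of definitions. The only subtle point is the use of dependent choice (a countable amount) in the forward direction to collect the witnesses $q^{n}_{m}$ into a single map $f$ on $[\omega]^{2}$; this is the conceptual core of the statement, as it shifts the witness of badness from a single-index sequence in $\mathcal{P}(Q)$ to a double-index sequence in $Q$. This is precisely the heuristic that motivates the passage from \wqo{} to \bqo{} and paves the way to indexing sequences by larger \front{}s in the sequel.
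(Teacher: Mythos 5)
Your proof is correct and follows essentially the same route as the paper: the forward direction formalizes the witness-picking argument from the preceding discussion (choosing $q^{n}_{m}\in P_{m}\setminus\dcl P_{n}$ via dependent choice), and the reverse direction takes $P_{m}$ to be the image of the $m$-th row of $f$, exactly as the paper does. No gaps.
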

\begin{proof}
As we have seen in the preceding discussion, if $\mathcal{P}(Q)$ is not \wqo{} then from a bad sequence in $\mathcal{P}(Q)$ we can make choices in order to define a bad sequences of sequences in $Q$.

Conversely, if $f:[\omega]^{2}\to Q$ is a bad sequence of sequences, then for each $m\in\omega$ we can consider the set $P_{m}=\{f(\{m,n\})\mid n\in\omega/m\}$ consisting in the image of the $m$\ts{th} sequence. Then the sequence $m\mapsto P_{m}$ in $\mathcal{P}(Q)$ is a bad sequence. Indeed every time $m<n$ we have $f(\{m,n\})\in P_{m}$ while $f(\{m,n\})\notin \dcl P_{n}$, since otherwise there would exist $l>n$ with $f(\{m,n\})\leq f(\{n,l\})$, a contradiction with the fact that $f$ is a bad sequence of sequences. 
\end{proof}

One should notice that from the previous proof we actually get that $\mathcal{P}(Q)$ is \wqo{} if and only if $\CtblSubs(Q)$ is \wqo{}.  Notice also that in the case of Rado's partial order $\Rado$ the fact that $\mathcal{P}(\Rado)$ is not \wqo{} is witnessed by the bad sequence $f:[\omega]^{2}\to\Rado, \{m,n\}\mapsto \{m,n\}$ which is simply the identity on the underlying sets, since every time $m<n<l$ then $\{m,n\}\nleq \{n,l\}$ in $\Rado$. In fact, Rado's partial order is in a sense universal as established by Richard Laver \cite{laver1976well}: 

\begin{theorem}
If $Q$ is \wqo{} but $\mathcal{P}(Q)$ is not \wqo{}, then $\Rado$ embeds into $Q$.
\end{theorem}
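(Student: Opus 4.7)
The plan is to start with a bad sequence of sequences $f \colon [\omega]^{2}\to Q$ given by \cref{prop2BQOssiDownQwqo} and to thin the index set to an infinite $N\subseteq \omega$ on which $f$ restricted to $[N]^{2}$ embeds $\Rado$ into $Q$. The central tool is iterated application of Ramsey's theorem to the finite colorings of $[\omega]^{3}$ and $[\omega]^{4}$ that record, for each pair of pairs drawn from the tuple, the comparison type of their $f$-images in $\{<, >, \equiv, \incomp\}$. After finitely many Ramsey reductions we obtain an infinite $N$ on which the comparison of $f(\{x,y\})$ and $f(\{x',y'\})$ depends only on the order type of $x,y,x',y'$ in $\omega$.

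Six such order types appear on $[N]^{2}$: \emph{row} $\{a,b\}$ vs.\ $\{a,c\}$, \emph{column} $\{a,c\}$ vs.\ $\{b,c\}$, and \emph{adjacent} $\{a,b\}$ vs.\ $\{b,c\}$ (for $a<b<c$), together with \emph{separated} $\{a,b\}$ vs.\ $\{c,d\}$, \emph{interleaved} $\{a,c\}$ vs.\ $\{b,d\}$, and \emph{nested} $\{a,d\}$ vs.\ $\{b,c\}$ (for $a<b<c<d$). I then pin down the homogeneous value of each by combining the badness of $f$ with \wqo{} of $Q$. Row $= <$: types $>$ and $\incomp$ yield an infinite descending chain or antichain inside a single row, and $\equiv$ collapses each row up to equivalence so badness produces a bad sequence $(f(\{a_{i},a_{i+1}\}))_{i}$ in $Q$. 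Adjacent $= \incomp$: types $<$ and $\equiv$ violate badness directly, while $>$ yields $f(\{n_{0},n_{1}\})>f(\{n_{1},n_{2}\})>\cdots$ along any enumeration of $N$. Separated $= <$: types $>$ and $\incomp$ produce a descending chain or antichain along pairwise disjoint pairs, and $\equiv$ combined with row $<$ forces $f(\{c,d\})\equiv f(\{c,e\})$, contradicting row $<$. Interleaved $= \incomp$: types $<$ and $\equiv$ combined with row $<$ give $f(\{a,b\})\leq f(\{b,d\})$ in violation of badness, and $>$ yields the telescoping chain $f(\{n_{0},n_{2}\})>f(\{n_{1},n_{3}\})>f(\{n_{2},n_{4}\})>\cdots$ via the identity $f(\{b_{i},d_{i}\})=f(\{a_{i+1},c_{i+1}\})$ obtained by choosing $a_{i},b_{i},c_{i},d_{i}$ to be four consecutive elements of $N$.

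The main obstacle is the analysis of \emph{column} and \emph{nested}: the value $>$ does not immediately yield a bad sequence and requires a more subtle downset argument. Setting
\[
D_{a}=\dcl\{f(\{a,c\}) \mid c\in N,\ c>a\}\in \Down(Q) \qquad (a\in N),
\]
column $>$ gives $f(\{b,c\})\leq f(\{a,c\})\in D_{a}$, and nested $>$ gives $f(\{b,c\})\leq f(\{a,d\})\in D_{a}$ for any $d\in N$ with $d>c$; in either case $D_{b}\subseteq D_{a}$ for $a<b$ in $N$. On the other hand $f(\{a,b\})\in D_{a}$ always, while badness gives $f(\{a,b\})\nleq f(\{b,c\})$ for every $c\in N$ with $c>b$, so $f(\{a,b\})\notin D_{b}$. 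Hence $(D_{a})_{a\in N}$ is an infinite descending chain in $\Down(Q)$, contradicting \cref{Item:6}. The remaining column and nested types $<$ and $\equiv$ again produce $f(\{a,b\})\leq f(\{b,c\})$ when composed with row $<$, contradicting badness. Therefore column $= \incomp$ and nested $= \incomp$.

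The six homogeneous values---row $<$, adjacent $\incomp$, column $\incomp$, separated $<$, interleaved $\incomp$, nested $\incomp$---match exactly the Rado ordering on $[N]^{2}$, so $f$ restricted to $[N]^{2}$ is an embedding of $\Rado$ (which, on $[N]^{2}$, is canonically isomorphic to $\Rado$ itself) into $Q$.
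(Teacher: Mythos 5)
Your proof is correct. It shares the paper's skeleton -- extract a bad sequence of sequences $f:[\omega]^{2}\to Q$ via \cref{prop2BQOssiDownQwqo}, homogenise with Ramsey on triples and quadruples, and read off the Rado structure -- but the execution differs in two genuine ways. First, you perform a full canonisation, colouring each of the six positional types with one of the four comparison types and then pinning down all six homogeneous values separately; the paper instead uses only two binary colourings (is the \enquote{row} comparison $\leq$ or not, is the \enquote{separated} comparison $\leq$ or not) and obtains every required incomparability in a single final case analysis combining badness with those two facts. Second, and more substantially, your treatment of the column-$>$ and nested-$>$ cases via the strictly descending chain $(D_{a})_{a\in N}$ in $\Down(Q)$, contradicting \cref{Item:6}, replaces the paper's trick of reserving $m=\min M$ and passing to $X=M\setminus\{\min M\}$: there, $f(\{i,j\})\leq f(\{k,l\})$ with $k<i$ is refuted by prefixing $f(\{m,k\})\leq f(\{i,j\})\leq f(\{k,l\})$ (using the separated homogeneity) and invoking badness on $\{m,k\}\tri\{k,l\}$. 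Your downset argument is valid, though it is worth noting it is avoidable even within your own setup: since you have already established separated $=<$, taking $a'<a<b<c$ in $N$ gives $f(\{a',a\})\leq f(\{b,c\})\leq f(\{a,c\})$ under column $>$, contradicting badness on $\{a',a\}\tri\{a,c\}$ directly (and similarly for nested $>$). The trade-off is that your route is more systematic and each case is self-contained, at the cost of heavier colourings and one appeal to the well-foundedness of $\Down(Q)$ that the paper's purely combinatorial argument does not need.
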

\begin{proof}
Let $f:[\omega]^{2}\to Q$ be a bad sequence of sequences. Partitioning the triples $\{i,j,k\}$, $i<j<k$, into two sets depending on whether or not $f(\{i,j\})\leq f(\{i,k\})$, we get by Ramsey's Theorem an infinite set $N\subseteq\omega$ whose triples are all contained into one of the classes. If for every $\{i,j,k\}\subseteq N$ we have $f(\{i,j\})\nleq f(\{i,k\})$ then for any $i\in N$ the sequence $f(\{i,j\})_{j\in N/i}$ is a bad sequence in $Q$. Since $Q$ is \wqo{}, the other possibility must hold. 

Then partition the quadruples $\{i,j,k,l\}$ in $N$ into two sets according to whether or not $f(\{i,j\})\leq f(\{k,l\})$. Again there exists an infinite subset $M$ of $N$ whose quadruples are all contained into one of the classes. If all quadruples $\{i,j,k,l\}$ in $M$ satisfy $f(\{i,j\})\nleq f(\{k,l\})$, then for any sequence $(\{i_{k},j_{k}\})_{k\in\omega}$ of pairs in $M$ with $j_{k}<i_{k+1}$ the sequence $f(\{i_{k},j_{k}\})_{k\in\omega}$ is bad in $Q$. Since $Q$ is \wqo{}, it must be the other possibility that holds. 

Let $X=M\setminus \{\min M\}$, then $\{f(\{i,j\})\mid \{i,j\}\in [X]^{2}\}$ is isomorphic to $\Rado$. By the properties of $M$, we have $\{i,j\}\leq\{k,l\}$ in $\Rado$ implies $f(\{i,j\})\leq f(\{k,l\})$. We show that $f(\{i,j\}) \leq f(\{k,l\})$ implies $\{i,j\}\leq \{k,l\}$ in $\Rado$. Suppose $\{i,j\}\nleq \{k,l\}$ in $\Rado$, namely $k\leq j$ and either $i\neq k$, or $l< j$. If $l<j$ and $f(\{i,j\})\leq f(\{k,l\})$ then for any $n\in X/j$ we have $f(\{k,l\})\leq f(\{j,n\})$ and thus $f(\{i,j\})\leq f(\{j,n\})$ a contradiction since $f$ is bad. Suppose now that $k\leq j$ and $i\neq k$. If $i<k$ and $f(\{i,j\})\leq f(\{k,l\})$, then $f(\{i,k\})\leq f(\{i,j\})\leq f(\{k,l\})$, a contradiction. Finally if $k<i$ and $f(\{i,j\})\leq f(\{k,l\})$ then for $m=\min M$ we have $f(\{m,k\})\leq f(\{i,j\})\leq f(\{k,l\})$, again a contradiction.
\end{proof}

From a heuristic viewpoint, a \emph{better-quasi-order} is a well quasi-order $Q$ such that $\mathcal{P}(Q)$ is \wqo{}, $\mathcal{P}(\mathcal{P}(Q))$ is \wqo{}, $\mathcal{P}(\mathcal{P}(\mathcal{P}(Q)))$ is \wqo{}, so on and so forth, into the transfinite. This idea will be made precise in \cref{sec bqoPower}, but we can already see that it cannot serve as a convenient definition\footnote{The reader who remains unconvinced can try to prove that the partial order $(3,=)$ satisfies this property.}. As the above discussion suggests, a better-quasi-order is going to be a qo $Q$, with no bad sequence, with no bad sequence of sequences, no bad sequence of sequences of sequences, so on and so forth, into the transfinite. To do so we need a convenient notion of \enquote{index set} for a sequence of sequences of~\dots\ of sequences, in short a \emph{super-sequence}. We now turn to the study of this fundamental notion defined by Nash-Williams.

\subsection{Super-sequences}

Let us first introduce some useful notation.  Given an infinite subset $X$ of $\omega$ and a natural number $k$, we denote by $[X]^{k}$ the set of subsets of $X$ of cardinality $k$, and by $\finSub{X}$ the set $\bigcup_{k\in\omega}[X]^{k}$ of finite subsets of $X$. When we write an element $s\in [X]^{k}$ as $\{n_{0},\ldots,n_{k-1}\}$ we always assume it is written in increasing order $n_{0}<n_{1}<\ldots< n_{k-1}$ for the usual order on $\omega$. The cardinality of $s\in\finSub{\omega}$ is denoted by $|s|$. We write $\infSub{X}$ for the set of infinite subsets of $X$.

For any $X\in \infSub{ \omega}$ and any $s\in\finSub{\omega}$, we let $X/s=\{k\in X\mid \max s<k\}$ and we write $X/n$ for $X/\{n\}$, as we have already done.

\subsubsection{Index sets for super-sequences}

Intuitively super-sequences are sequences of sequences~\dots\ of sequences. In order to deal properly with this idea we need a convenient notion of index sets. Those will be families of finite sets of natural numbers called \emph{fronts}. They were defined by \textcite{nash1965welltrees}. As the presence of an ellipsis in the expression \enquote{sequences of sequences of~\dots\ of sequences} suggests, the notion of front admits an inductive definition. To formulate such a definition it is useful to identify the degenerate case of a super-sequence, the level zero of the notion of sequence of~\dots\ of sequences, namely a function $f:1\to E$ which singles out a point of a set $E$. The index set for these degenerate sequences is the family $\{\emptyset\}$ called the \emph{trivial front}. New fronts are then built up from old ones using the following operation. 

\begin{definition}\label{def OperationSeq}
If $X\in\infSub{\omega}$ and $F(n)\subseteq \finSub{X/n}$ for every $n\in X$, we let
\[
\Seq_{n\in X} F(n)=\big\{\{n\}\cup s\mid n\in X \text{ and }s\in F(n)\big\}.
\]
\end{definition}

\begin{definition}[Front, inductive definition]
 We define a \emph{front on $X$} simultaneously for every $X\in\infSub{\omega}$ by induction using the two following clauses: 
\begin{enumDefn}
\item for all $X\in \infSub{\omega}$, the family $\{\emptyset\}$ is a front on $X$,
\item if $X\in \infSub{\omega}$ and if $F(n)$ is a front on $X/n$ for all $n\in X$, then 
\[
F=\Seq_{n\in X} F(n)
\]
is a front on $X$.
\end{enumDefn}
\end{definition}

\begin{remark}
In the literature, fronts are sometimes called \emph{blocks} or \emph{thin blocks}. Here we follow the terminology of \textcite{todorvcevic2010introduction}.
\end{remark}

\begin{examples}
We have already seen example of fronts. Indeed for every $X\in\infSub{\omega}$ and every $n\in\omega$ the family $[X]^{n}$ is a front on $X$, where $[X]^{0}=\{\emptyset\}$ is the trivial front. For a new example, consider for every $n\in \omega$ the front $[\omega/n]^{n}$ and build
\[
\Schreier=\Seq_{n\in \omega} \left[\omega/n\right]^{n}
=\{s\in\finSub{\omega}\mid 1+\min s=|s|\}.
\]
The front $\Schreier$ is traditionally called the \emph{Schreier barrier}.

\begin{figure}[ht]

\includegraphics{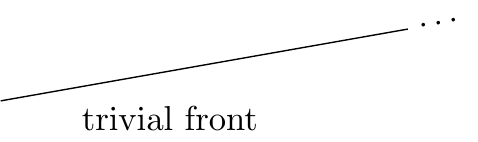} \quad \includegraphics{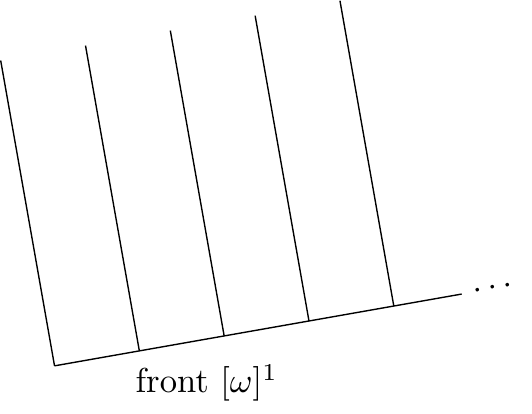}\\
\includegraphics{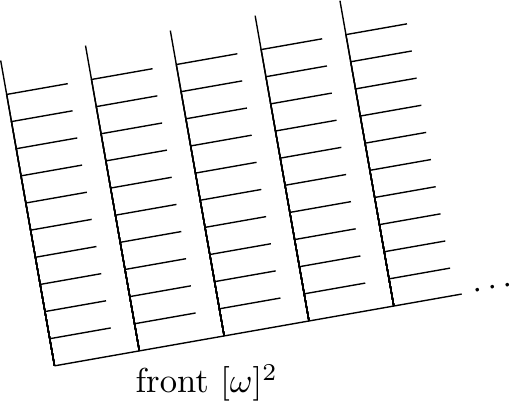} \quad \includegraphics{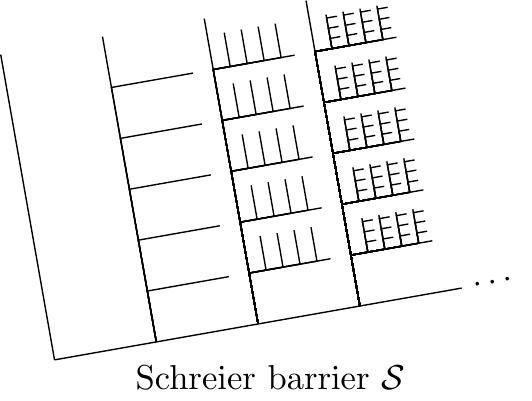}
\caption{Pictures of fronts}\label{FigFronts}
\end{figure}
\end{examples}

We defined fronts in order to make the following definition.
\begin{definition}
A \emph{super-sequence} in a set $E$ is a map $f:F\to E$ from a front into $E$.
\end{definition}

Notice that if $F$ is a non trivial front on $X$, we can recover the unique sequence $F(n)$, $n\in X$, of fronts from which it is constructed. 
\begin{definition}
For any family $F\subseteq \finSub{\omega}$ and $n\in\omega$ we define the \emph{ray} of $F$ at $n$ to be the family
\[
F_{n}=\bigl\{s\in\finSub{\omega/n}\bigm\vert \{n\}\cup s\in F\bigr\}.
\]
\end{definition}

Then every non trivial front $F$ on $X$ is built up from its rays $F_{n}$, $n\in X$, in the sense that:
\[
F=\Seq_{n\in X}F_{n}.
\]

Notice that, according to our definition, the trivial front $\{\emptyset\}$ is a front on $X$ for every $X\in\infSub{\omega}$. Except for this degenerate example, if a family $F\subseteq \finSub{X}$ is a front on $X$, then necessarily $X$ is equal to $\base F$, the set-theoretic union of the family $F$. For this reason we will sometimes say that $F$ is a front, without reference to any infinite subset $X$ of $\omega$. Moreover when $F$ is not trivial, we refer to the unique $X$ for which $F$ is a front on $X$, namely $\base F$, as the \emph{base} of $F$.

Importantly, the notion of a front also admits an explicit definition to which we now turn. It makes essential use of the following binary relation.

\begin{definition}
For subsets $u,v$ of $\omega$, we write $u\segm v$ when $u$ is an \emph{initial segment} of $v$, i.e. when $u=v$ or when there exists $n\in v$ such that $u=\{k\in v\mid k<n\}$. As usual, we write $u\segs v$ for $u\segm v$ and $u\neq v$. 
\end{definition}

\begin{definition}[Front, explicit definition]
A family $F\subseteq\finSub{\omega}$ is a \emph{front on} $X\in \infSub{\omega}$ if 
\begin{enumDefn}
\item either $F=\{\emptyset\}$, or $\base F=X$,\label{def Front1}
\item for all $s,t\in F$ $s\segm t$ implies $s=t$, \label{def Front2}
\item (Density) for all $X'\in\infSub{X}$ there is an $s\in F$ such that $s\segs X'$. \label{def Front3}
\end{enumDefn}
\end{definition}

Merely for the purpose of showing that our two definitions coincide, and only until this is achieved, let us refer to a front according to the explicit definition as a \front{}. Notice that the family $\{\emptyset\}$ is a \front{}, the trivial \front{}.  Notice also that if $F$ is a non trivial \front{} then necessarily $\emptyset\notin F$.

Our first step towards proving the equivalence of our two definitions of fronts is the following easy observation.

\begin{lemma}\label{lemFrontRays}
Let $F$ be a non trivial \front{} on $X\in \infSub{\omega}$. Then for every $n\in X$, the ray $F_{n}$ is a \front{} on $X/n$. Moreover $F=\Seq_{n\in X}F_{n}$. 
\end{lemma}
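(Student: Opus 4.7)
The plan is to verify axioms (1)--(3) of the explicit definition of front for $F_n$ viewed as a family of subsets of $X/n$, and then read off the identity $F = \Seq_{n \in X} F_n$ from the definition of the rays. Throughout, $F$ is non-trivial, so $\emptyset \notin F$ and $\base F = X$; also note that since $n \in X = \base F$ and every element of $F$ is finite, $X/n$ is infinite.

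The antichain condition for $F_n$ is essentially automatic: if $s, t \in F_n$ satisfy $s \segm t$, then $\{n\}\cup s \segm \{n\}\cup t$ in $F$, so $\{n\}\cup s = \{n\}\cup t$ by the corresponding axiom for $F$, whence $s = t$. For density, given $X' \in \infSub{X/n}$, I form $Y = \{n\}\cup X' \in \infSub{X}$ and apply density of $F$ to obtain $t \in F$ with $t \segs Y$. Since $F$ is non-trivial, $t \neq \emptyset$; as $t$ is a proper initial segment of $Y$ whose smallest element is $n$, necessarily $\min t = n$. Writing $t = \{n\}\cup s$ yields $s \in F_n$ with $s \segs X'$ (the strict inclusion being automatic since $s$ is finite while $X'$ is infinite).

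For axiom (1), first observe that if $\emptyset \in F_n$, i.e.\ if $\{n\}\in F$, then $F_n = \{\emptyset\}$: any other $s \in F_n$ would make $\{n\}$ a proper initial segment of $\{n\}\cup s$ in $F$, contradicting the antichain property of $F$. So suppose $\emptyset \notin F_n$; it remains to show $\base F_n = X/n$. The inclusion $\base F_n \subseteq X/n$ is immediate from the definition. Conversely, for each $m \in X/n$, apply the density of $F_n$ just established to $X' = \{k \in X \mid k \geq m\}$: one obtains a non-empty $s \in F_n$ with $s \segs X'$, and then $m = \min X' = \min s \in \base F_n$.

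Finally, the decomposition $F = \Seq_{n \in X} F_n$ is immediate from the definitions. The inclusion $\supseteq$ is just the definition of $F_n$; conversely any $t \in F$ is non-empty, so setting $n := \min t \in X$ and $s := t \setminus \{n\}$ gives $s \in \finSub{\omega/n}$ with $\{n\}\cup s = t \in F$, i.e.\ $s \in F_n$. The only subtlety in the whole argument is the order of attack: density for $F_n$ must be established \emph{before} attempting the reverse inclusion in axiom (1), since the latter relies on the former.
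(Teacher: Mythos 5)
Your proof is correct and takes essentially the same route as the paper's: verify density for $F_n$ by padding $X'$ with $n$ and stripping $n$ from the resulting element of $F$, deduce $\base F_n = X/n$ from that density (after disposing of the trivial-ray case), check the antichain condition by adjoining $n$, and read off $F=\Seq_{n\in X}F_n$ from the definition of the rays. The extra care you take with the order of the verifications and with the decomposition (which the paper dismisses as obvious) is sound but not a different argument.
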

\begin{proof}
Let $n\in X$. For every $Y\in\infSub{X/n}$ there exists $s\in F$ with $s\segs \{n\}\cup Y$. Since $F$ is non trivial, $s\neq\emptyset$ and so $n\in s$. Therefore $s'=s\setminus \{n\}\in F_{n}$ with $s'\segs Y$, and $F_{n}$ satisfies \cref{def Front3}. Now if $F_{n}$ is not trivial and $k\in X/n$, there is $s\in F_{n}$ with $s\segs \{k\}\cup X/k$ and necessarily $k\in s\subseteq \base F_{n}$. Hence $\base F_{n}=X/n$, so condition \cref{def Front1} is met. To see \cref{def Front2}, let $s,t\in F_{n}$ with $s\segm t$. Then for $s'=\{n\}\cup s$ and $t'=\{n\}\cup t$ we have $s',t'\in F$ and $s'\segm t'$, so $s'=t'$ and $s=t$, as desired. The last statement is obvious.
\end{proof}

Our next step consists in assigning a rank to every \front{}. To do so, we first recall some classical notions about sequences and trees.

\begin{notation}
For a non empty set $A$, we write $A^{n}$ for the set of sequences $s:n\to A$. Let $A^{<\omega}$ be the set $\bigcup_{n\in\omega} A^{n}$ of finite sequences in $A$. We write $A^{\omega}$ for the set of infinite sequences $x:\omega \to A$ in $A$. Let $u\in A^{<\omega}$, $x\in A^{<\omega}\cup A^{\omega}$.
\begin{enumDefn}
\item $|x|\in \omega+1$ denotes the length of $x$.
\item For $n\leq |x|$, $x\restr{n}$ is the initial segment, or prefix, of $x$ of length $n$.
\item  We write $u\segm x$ if there exists $n\leq |x|$ with $u=x\restr{n}$. We write $u\segs x$ if $u\segm x$ and $u\neq x$.
\item We write $u\conc x$ for the concatenation operation.
\end{enumDefn}
\end{notation}

Identifying any finite subset of $\omega$ with its increasing enumeration with respect to the usual order on $\omega$, we view any \front{} as a subset of $\omega^{<\omega}$. 
Notice that under this identification, our previous definition of $\segm$ for subsets of $\omega$ coincides with the one for sequences. 

\begin{definitions}\label{def trees}
\begin{enumDefn}
\item A \emph{tree} $T$ on a set $A$ is a subset of $A^{<\omega}$ that is closed under prefixes, i.e. $u\segm v$ and $v\in T$ implies $u\in T$. 
\item A tree $T$ on $A$ is called \emph{well-founded} if $T$ has no infinite branch, i.e. if there is no infinite sequence $x\in A^{\omega}$ such that $x\restr{n}\in T$ holds for all $n\in\omega$. In other words, a tree $T$ is well-founded if $(T,\mges)$ is a well-founded partial order.

\item When $T$ is a non-empty well-founded tree we can define a strictly decreasing function $\rho_{T}$ from $T$ to the ordinals by transfinite recursion on the well-founded relation $\sges$:
\[
\rho_{T}(t)=\sup\{\rho_{T}(s)+1 \mid t\segs s\in T\} \quad\text{for all $t\in T$.}
\] 
It is easily shown to be equivalent to
\[
\rho_{T}(t)=\sup\{\rho_{T}(t\conc (a))+1\mid a\in A \text{ and } t\conc(a)\in T\} \quad\text{for all $t\in T$.}
\]
The \emph{rank} of the non-empty well-founded tree $T$ is the ordinal $\rho_{T}(\emptyset)$. 
\end{enumDefn}
\end{definitions}

For any \front{} $F$, we let $T(F)$ be the smallest tree on $\omega$ containing $F$, i.e.
\[
T(F)=\{s\in\omega^{<\omega}\mid \exists t\in F \ s\segm t\}.
\]

The following is a direct consequence of the explicit definition of a front.
\begin{lemma}\label{lem TreeFrontWellFounded}
For every \front{} $F$, the tree $T(F)$ is well-founded.
\end{lemma}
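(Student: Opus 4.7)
The plan is to argue by contradiction, exploiting the density condition of the explicit definition. If $F = \{\emptyset\}$ is the trivial \front{}, then $T(F) = \{\emptyset\}$, which manifestly has no infinite branch. So I would assume that $F$ is non-trivial; then $\emptyset \notin F$ and $\base F = X$ for some $X \in \infSub{\omega}$. Suppose, towards a contradiction, that $x \in \omega^{\omega}$ is an infinite branch of $T(F)$, i.e. $x\restr{n} \in T(F)$ for every $n \in \omega$.

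First I would unpack what this branch looks like under the identification of finite subsets of $\omega$ with their increasing enumerations. Every $t \in F$ is a strictly increasing finite sequence in $\omega^{<\omega}$, and hence so is every prefix of such a $t$, that is, every element of $T(F)$. Consequently $x \colon \omega \to \omega$ is strictly increasing as a sequence, and each of its values $x(n)$ appears in some $t_{n} \in F \subseteq \finSub{X}$. Setting $X' = \{x(n) \mid n \in \omega\}$, we therefore obtain an element of $\infSub{X}$ whose increasing enumeration is precisely $x$.

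Now I would invoke the density condition (condition \cref{def Front3}) applied to $X'$: there exists $s \in F$ with $s \segs X'$. Under our identification this means $s = x\restr{k}$ for some $k \in \omega$. Since $F$ is non-trivial we have $s \neq \emptyset$, so $k \geq 1$. The sequence $x\restr{k+1}$ strictly extends $s$ and lies in $T(F)$, so there is some $t \in F$ with $x\restr{k+1} \segm t$. Then $s \segs x\restr{k+1} \segm t$, giving $s \segs t$ with both $s, t \in F$ and $s \neq t$; this contradicts condition \cref{def Front2}, which demands that $s \segm t$ and $s, t \in F$ force $s = t$.

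The main conceptual point — and essentially the only potential friction — is verifying once and for all that the relation $\segm$ on finite subsets of $\omega$ (as introduced just before the explicit definition) coincides with the prefix relation on increasing sequences used in the definition of $T(F)$. Once this compatibility is noted, the proof is a direct confrontation between the density condition, which produces an $s \in F$ forming a proper initial segment of the branch $x$, and the incomparability condition, which is violated as soon as $x$ extends $s$ by even one further step.
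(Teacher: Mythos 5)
Your proof is correct and follows essentially the same route as the paper's: identify the branch $x$ with the infinite set it enumerates, use density to find $s\in F$ with $s\segs X'$, extend by one more element of the branch to get some $t\in F$ properly extending $s$, and contradict condition \cref{def Front2}. The extra care you take with the identification of finite sets and increasing sequences, and the separate treatment of the trivial front, are fine but not substantively different from the paper's argument.
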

\begin{proof}
If $x$ is an infinite branch of $T(F)$, then $x$ enumerates an infinite subset $X$ of $\base F$ such that for every $u\segs X$ there exists $t\in F$ with $u\segm t$. Since $F$ is a \front{} there exists a (unique) $s\in F$ with $s\segs X$. Let $n=\min X/ s$ and for $u=s\cup \{n\}$ consider some $t\in F$ with $u\segm t$.
But then $F\ni s\segs u\segm t\in F$ which contradicts the explicit definition of a front. 
\end{proof}

\begin{definition}
Let $F$ be a \front{}. 
The \emph{rank} of $F$, denoted by $\rk F$, is the rank of the tree $T(F)$.
\end{definition}

\begin{example}
Notice that the family $\{\emptyset\}$ is the only \front{} of null rank, and for all positive integer $n$, the front $[\omega]^{n}$ has rank $n$. Moreover the Schreier barrier $\Schreier$ has rank $\omega$.
\end{example}

We now observe that the rank of $F$ is closely related to the rank of its rays $F_{n}$, $n\in X$. Let $F$ be a non trivial \front{} on $X\in \infSub{\omega}$ and recall that by \cref{lemFrontRays}, the ray $F_{n}$ is a \front{} on $X/n$ for every $n\in X$. Now notice that the tree $T(F_{n})$ of the \front{} $F_{n}$ is naturally isomorphic to the subset 
\[
\{s\in T(F)\mid \{n\}\segm s\} 
\] 
of $T(F)$. The rank of the \front{} $F$ is therefore related to the ranks of its rays through the following formula:
\[
\rk F=\sup \{\rk (F_{n})+1 \mid n\in X\}.
\]
In particular, $\rk F_{n}<\rk F$ for all $n\in X$.

This simple remark allows one to prove results on \front{}s by induction on the rank by applying the induction hypothesis to the rays, as it was first done by \textcite{pudlak1982partition}. It also allows us to prove that the two definitions of a front that we gave actually coincide.

\begin{lemma}
The explicit definition and the inductive definition of a front coincide.
\end{lemma}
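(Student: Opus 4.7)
The plan is to prove the two implications separately, with the forward direction (inductive $\Rightarrow$ explicit) handled by induction on the construction, and the reverse direction (explicit $\Rightarrow$ inductive) handled by induction on the rank, exploiting \cref{lemFrontRays}.

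For the forward direction, I would proceed by structural induction on the inductive definition. The base case $F = \{\emptyset\}$ satisfies all three explicit conditions trivially. For the inductive step, suppose $F = \Seq_{n \in X} F(n)$ where each $F(n)$ is assumed by induction hypothesis to be a \front{} on $X/n$. To check condition \cref{def Front1}, note that every element of $F$ has the form $\{n\} \cup s$ with $n \in X$ and $s \subseteq X/n$, so $\base F \subseteq X$; conversely each $F(n)$ is non-empty (by density applied to $X/n \in \infSub{X/n}$), giving some $\{n\} \cup s \in F$ containing $n$. For condition \cref{def Front2}, if $t_1 = \{n_1\}\cup s_1 \segm t_2 = \{n_2\}\cup s_2$ with $s_i \in F(n_i)$, then comparing minima forces $n_1 = n_2 = n$ and $s_1 \segm s_2$, so the induction hypothesis applied to $F(n)$ gives $s_1 = s_2$.

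The main subtle step in the forward direction is density \cref{def Front3}. Given $X' \in \infSub{X}$, set $n = \min X'$ and $Y = X'/n \in \infSub{X/n}$. Density for $F(n)$ yields $s \in F(n)$ with $s \segs Y$, and then $\{n\} \cup s \in F$ is a proper initial segment of $\{n\} \cup Y = X'$. This is the point where the interplay between the two notions of initial segment (for subsets of $\omega$ versus for fronts assembled by $\Seq$) has to be checked carefully, but the argument goes through directly.

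For the reverse direction, I would induct on the rank $\rk F$ (which exists by \cref{lem TreeFrontWellFounded}). If $\rk F = 0$, then $T(F) = \{\emptyset\}$, so $F \subseteq \{\emptyset\}$; density forces $F$ non-empty, hence $F = \{\emptyset\}$, which is an inductive front. If $\rk F > 0$, then $F$ is non-trivial with $X = \base F$, and \cref{lemFrontRays} provides the decomposition $F = \Seq_{n \in X} F_n$ where each $F_n$ is a \front{} on $X/n$. Using the formula $\rk F = \sup\{\rk F_n + 1 \mid n \in X\}$ noted just before the statement, each $\rk F_n < \rk F$, so the induction hypothesis makes each $F_n$ an inductive front, and therefore $F$ itself is built by one application of the $\Seq$ operation from inductive fronts, hence is inductive. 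The main conceptual obstacle is really packaged into \cref{lemFrontRays}, which has already been proved; once we have it, the rank induction closes cleanly.
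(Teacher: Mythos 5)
Your proposal is correct and follows essentially the same route as the paper: a direct verification of the three explicit conditions for fronts built by $\Seq$ (structural induction) in one direction, and an induction on $\rk F$ using \cref{lemFrontRays} together with the formula $\rk F=\sup\{\rk F_{n}+1\mid n\in X\}$ in the other. The extra details you supply (non-emptiness of each $F(n)$ via density to get $\base F=X$, and the check that $\{n\}\cup s\segs X'$) are exactly the points the paper leaves implicit.
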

\begin{proof}
\begin{descProof}
\item[Inductive $\to$ Explicit] The family $\{\emptyset\}$ is the trivial \front{}. Now let $X\in\infSub{\omega}$ and suppose that $F_{n}$ is a \front{} on $X/n$ for all $n\in X$. We need to see that $F=\Seq_{n\in X}F_{n}$ is a \front{} on $X$. Clearly $\base F=X$. If $s,t\in F$ and $s\segm t$, then for some $n\in X$ we have $\min s=\min t=n$. So for $s'=s\setminus \{n\}$ and $t'=t\setminus\{n\}$ we have $s',t'\in F_{n}$ and $s'\segm t'$, hence $s'=t'$ holds and so does $s=t$. Finally, if $Y\in\infSub{X}$ with $n=\min Y$, then there exists $s'\in F_{n}$ with $s'\segs Y\setminus\{n\}$ and so $s=\{n\}\cup s'\in F$ and $s\segs Y$. So $F$ is a \front{}, as desired.

\item[Explicit $\to$ Inductive] We show that every \front{} $F$ satisfies the inductive definition of a front by induction on the rank of $F$. If $\rk F=0$, then $F=\{\emptyset\}$ is a front according to the inductive definition. Now suppose $F$ is a front according to the explicit definition with $\rk F>0$. In particular $\base F=X$ for some $X\in \infSub{\omega}$. 
By \cref{lemFrontRays}, $F_n$ is a front on $X/n$ for every $n\in X$. Now for every $n\in X$, as $\rk F_n<\rk F$ we get that $F_n$ is a front on $X/n$ according to the inductive definition, by the induction hypothesis.
Finally as $F=\Seq_{n\in X} F_{n}$, we get that $F$ is a front according to the inductive definition. \qedhere
\end{descProof}
\end{proof}

Finally notice that the rank of a front naturally arise from the inductive definition. Let $\mathfrak{F}_{0}$ be the set containing only the trivial front. Then for any countable ordinal $\alpha$, let $F\in\mathfrak{F}_{\alpha}$ if $F\in \bigcup_{\beta<\alpha}\mathfrak{F}_{\beta}$ or $F=\Seq_{n\in X} F_{n}$ where $X\in\infSub{\omega}$ and each $F_{n}$ is a front on $X/n$ which belongs to some $\mathfrak{F}_{\beta_{n}}$ for some $\beta_{n}<\alpha$. Then clearly the set of all fronts is equal $\bigcup_{\alpha<\omega_{1}}\mathfrak{F}_{\alpha}$. Now it should be clear that for every front $F$ the smallest $\alpha<\omega_{1}$ for which $F\in \mathfrak{F}_{\alpha}$ is $\rk F$, the rank of $F$. 

\subsubsection{Sub-front and sub-super-sequences}

When using super-sequences one is often interested in extracting sub-super-sequences which enjoy further properties.

\begin{definition}
A sub-super-sequence of a super-sequence $f:F\to E$ is a restriction $f\restr{G}:G\to E$ to some front $G$ included in $F$.
\end{definition}

The following important operation allows us to understand the \emph{sub-fronts} of a given front, i.e. sub-families of a front which are themselves fronts. For a family $F\subseteq \mathcal{P}(\omega)$ and some $X\in\infSub{\omega}$, we define the sub-family
\[
F|X:=\{s\in F\mid s\subseteq X\}.
\]

\begin{proposition}
Let $F$ be a front on $X$. Then a family $F'\subseteq F$ is a front if and only if there exists $Y\in\infSub{X}$ such that $F|Y=F'$.
\end{proposition}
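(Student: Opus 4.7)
Both directions use the explicit definition of a front. The trivial cases are easily dispatched: if $F=\{\emptyset\}$ then $F|Y=F$ for any $Y\in\infSub{X}$ and the only front contained in $F$ is $F$ itself, so the equivalence holds; and if $F'=\{\emptyset\}$ then $\emptyset\in F$, which by incomparability (condition \cref{def Front2}) forces $F=\{\emptyset\}$. So from now on I may assume that both $F$ and $F'$ are non-trivial, in particular $\emptyset\notin F$.

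For the backward direction, fix $Y\in\infSub{X}$ and set $F'=F|Y$. Incomparability for $F'$ is inherited from $F$. For density of $F'$ on $Y$, observe that $\infSub{Y}\subseteq\infSub{X}$: given $Y'\in\infSub{Y}$, density of $F$ yields $s\in F$ with $s\segs Y'\subseteq Y$, whence $s\in F|Y=F'$. Applying this to $Y'=Y$ produces a non-empty $s\in F'$ (since $\emptyset\notin F$), so $F'$ is itself non-trivial; and for any $y\in Y$, density applied to $\{y\}\cup (Y/y)\in\infSub{Y}$ yields a non-empty $s\in F'$ with $s\segs \{y\}\cup (Y/y)$, so $\min s=y\in\base F'$. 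Hence $\base F'=Y$, verifying \cref{def Front1}.

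For the forward direction, suppose $F'\subseteq F$ is a non-trivial front and let $Y:=\base F'$. Since every $s\in F'$ is contained in $\base F=X$, we have $Y\subseteq X$, and $Y$ is infinite because $F'$ is a non-trivial front. Thus $Y\in\infSub{X}$, and the inclusion $F'\subseteq F|Y$ is immediate. For the reverse inclusion, pick $s\in F|Y$, necessarily non-empty. Consider $Z:=s\cup(Y/\max s)\in\infSub{Y}$; taking the cut point $n=\min(Y/\max s)$ shows that $s\segs Z$. Density of $F'$ on $Y$ now provides $t\in F'$ with $t\segs Z$. Both $s$ and $t$ are proper initial segments of the linearly ordered set $Z$, hence $\segm$-comparable; since $s,t\in F$, condition \cref{def Front2} applied to $F$ forces $s=t\in F'$.

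The decisive step is the inclusion $F|Y\subseteq F'$ in the forward direction: one cannot compare a generic $s\in F|Y$ with elements of $F'$ directly, so one embeds $s$ as a proper initial segment of an infinite $Z\subseteq Y$ and invokes density of $F'$ to extract a competitor $t\in F'$ which is then pinned to $s$ via the sharp incomparability clause of the ambient front $F$. This interplay between density in $F'$ and incomparability in $F$ is the crux of the proposition.
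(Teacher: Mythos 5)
Your proof is correct and follows essentially the same route as the paper: the backward direction verifies the three clauses of the explicit definition for $F|Y$ exactly as the paper does, and the forward direction's key step (extending $s\in F|Y$ to an infinite $Z\subseteq Y$, extracting $t\in F'$ with $t\segs Z$ by density, and forcing $s=t$ via the incomparability clause of $F$) is precisely the paper's argument. Your explicit treatment of the trivial cases is slightly more careful than the paper's one-line dismissal, but the substance is identical.
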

\begin{proof}
The claim is obvious if $F$ is trivial so suppose $F$ is non-trivial.
\begin{itemize}
\item[$\to$] Let $F'\subseteq F$ be a front on $Y$. Since $F'$ is not trivial either, $Y=\base F'\subseteq \base F=X$. Now if $s\in F'$ then clearly $s\in F|Y$. Conversely if $s\in F|Y$ then there exists a unique $t\in F'$ with $t\segs s \cup Y/s$ and so either $s\segm t$ or $t\segm s$. Since $F$ is a front and $s,t\in F$, necessarily $s=t$ and so $s\in F'$. Therefore $F'=F|Y$.
\item[$\leftarrow$] If $Y\in\infSub{X}$ then the family $F|Y$ is a front on $Y$. Clearly $F|Y$ satisfies \cref{def Front2}. If $Z\in \infSub{Y}$ then since $Y\subseteq X$, then $Z\in\infSub{X}$ and so there exists $s\in F$ with $s\segs Z$. But then $s\subseteq Z\subseteq Y$, so in fact $s\in F|Y$ and therefore $F|Y$ satisfies \cref{def Front3}. For \cref{def Front1}, notice that $\base F|Y\subseteq Y$ by definition and that if $n\in Y$, then as we have already seen there exists $s\in F|Y$ with $s\segs \{n\}\cup Y/n$, so $n\in s$ and $n\in \base F|Y$.   \qedhere
\end{itemize}
\end{proof}

Observe that the operation of restriction commutes with the taking of rays.

\begin{fact}
Let $F\subseteq \mathcal{P}(\omega)$ and $X\in\infSub{\omega}$. For every $n\in X$ we have \[F_{n}|X=(F|X)_{n}.\]
\end{fact}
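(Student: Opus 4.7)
The plan is to prove this by direct unfolding of the definitions on both sides and showing they yield literally the same set, using the hypothesis $n\in X$ at exactly one point.

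First I would compute the left-hand side. Applying the definition of restriction to the family $F_n$, we have
\[
F_n|X = \{s\in F_n \mid s\subseteq X\} = \bigl\{s\in\finSub{\omega/n} \bigm| \{n\}\cup s\in F \text{ and } s\subseteq X\bigr\},
\]
by unfolding the definition of the ray $F_n$.

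Next I would compute the right-hand side. Applying the definition of the ray to the family $F|X$, we get
\[
(F|X)_n = \bigl\{s\in\finSub{\omega/n} \bigm| \{n\}\cup s\in F|X\bigr\} = \bigl\{s\in\finSub{\omega/n} \bigm| \{n\}\cup s\in F \text{ and } \{n\}\cup s\subseteq X\bigr\}.
\]

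The only step that requires any thought — and this is the whole content of the fact — is observing that, for $s\in\finSub{\omega/n}$, the condition $\{n\}\cup s\subseteq X$ is equivalent to $s\subseteq X$ under the standing assumption that $n\in X$. With this remark the two expressions coincide term by term, giving the claimed equality. There is no genuine obstacle here; the statement is essentially a sanity check that the two operations of restriction and ray commute, and it will be used silently in later inductive arguments on fronts via their rays.
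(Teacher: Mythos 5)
Your proof is correct: the paper states this fact without proof precisely because it reduces to the direct unfolding you give, and your identification of the single point where $n\in X$ is used (to pass from $\{n\}\cup s\subseteq X$ to $s\subseteq X$) is exactly the content of the claim. Nothing further is needed.
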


Notice also the following simple important fact. If $F'$ is a sub-front of a front $F$, then the tree $T(F')$ is included in the tree $T(F)$ and so $\rk F'\leq \rk F$.

The importance of fronts essentially stems from the following fundamental theorem by Nash-Williams: Any time we partition a front into finitely many pieces, at least one of the pieces must contain a front.

\begin{theorem}[Nash-Williams]\label{cor:NashWill}
Let $F$ be a front. For any subset $S$ of $F$ there exists a front $F'\subseteq F$  such that either $F'\subseteq S$ or $F'\cap S=\emptyset$.
\end{theorem}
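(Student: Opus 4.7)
The plan is to prove the theorem by transfinite induction on $\rk F$. The base case $\rk F = 0$ is immediate: $F = \{\emptyset\}$, and depending on whether $\emptyset \in S$ or not, we simply take $F' = F$.

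For the inductive step, assume $F$ is a non-trivial front on $X$, so $F = \Seq_{n\in X} F_{n}$ with $\rk F_{n} < \rk F$ for every $n \in X$. For each $n \in X$, pull back the partition to the ray by defining
\[
S_{n} = \bigl\{s \in F_{n} \bigm\vert \{n\} \cup s \in S\bigr\} \subseteq F_{n}.
\]
I would then construct, by recursion on $k \in \omega$, a decreasing sequence $X = Z_{-1} \supseteq Z_{0} \supseteq Z_{1} \supseteq \cdots$ of infinite subsets of $X$ together with elements $y_{k} = \min Z_{k-1}$ and labels $b_{k} \in \{0,1\}$ as follows: given $Z_{k-1}$, consider the sub-front $F_{y_{k}} | (Z_{k-1}/y_{k})$ of $F_{y_{k}}$, which still has rank strictly less than $\rk F$. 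Apply the induction hypothesis to this sub-front partitioned by $S_{y_{k}}$; the result (together with the earlier proposition characterising sub-fronts) yields an infinite $Z_{k} \in \infSub{Z_{k-1}/y_{k}}$ such that $F_{y_{k}} | Z_{k}$ is contained in $S_{y_{k}}$ (set $b_{k}=0$) or disjoint from $S_{y_{k}}$ (set $b_{k}=1$).

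By the infinite pigeonhole principle pick $I \in \infSub{\omega}$ on which $b_{k}$ is constant, equal to $b$. Set $Y = \{y_{k} \mid k \in I\} \in \infSub{X}$. The key point is that $Y/y_{k} \subseteq Z_{k}$ for every $k \in I$, because any later $y_{j}$ ($j > k$) lies in $Z_{j-1} \subseteq Z_{k}$. Consequently $F_{y_{k}} | (Y/y_{k}) \subseteq F_{y_{k}} | Z_{k}$, which is contained in (resp.\ disjoint from) $S_{y_{k}}$ according to $b$. Since every element of $F | Y$ has the form $\{y_{k}\} \cup s$ with $k \in I$ and $s \in F_{y_{k}} | (Y/y_{k})$, we conclude that $F' = F | Y$ is contained in $S$ if $b = 0$ and disjoint from $S$ if $b = 1$.

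The main obstacle is setting up the diagonal construction cleanly so that the induction hypothesis can be applied at each stage (which requires feeding it a sub-front of $F_{y_{k}}$ living above the previously chosen indices) and so that the final uniform thinning via pigeonhole actually preserves the containment/disjointness at every stage simultaneously; the inclusion $Y/y_{k} \subseteq Z_{k}$ is the crucial bookkeeping lemma that makes the argument go through.
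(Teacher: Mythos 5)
Your proof is correct and follows essentially the same route as the paper's: induction on the rank, pulling the partition back to the rays, a diagonal construction of a decreasing chain of infinite sets obtained by applying the induction hypothesis to each ray restricted above the previously chosen minima, a pigeonhole step to make the outcome uniform, and finally restricting $F$ to the set of selected minima. The bookkeeping inclusion $Y/y_{k}\subseteq Z_{k}$ that you single out is exactly the point the paper's argument relies on (implicitly) when it concludes that $F|X$ is homogeneous.
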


We now prove this theorem to give a simple example of a proof by induction on the rank of a front, a technique which is extremely fruitful.

\begin{proof}
The claim is obvious for the trivial front whose only subsets are the empty set and the whole trivial front. So suppose that the claim holds for every front of rank smaller than $\alpha$. Let $F$ be a front on $X$ with $\rk F=\alpha$ and $S\subseteq F$. For every $n\in X$ let $S_{n}$ be the subset of the ray $F_{n}$ given by $S_{n}=\{s\in F_{n}\mid \{n\} \cup s\in S\}$. 

Set $X_{-1}=X$ and $n_{0}=\min X_{-1}$. Since $\rk F_{n_{0}}<\alpha$ there exists by induction hypothesis some $X_{0}\in\infSub{X_{-1}/n_{0}}$ such that 
\[
\text{either }F_{n_{0}}|X_{0}\subseteq S_{n_{0}}, \quad\text{or }F_{n_{0}}|X_{0}\cap S_{n_{0}}=\emptyset.
\] 
Set $n_{1}=\min X_{0}$. Now applying the induction hypothesis to $F_{n_{1}}|(X_{0}/n_{1})$ and $S_{n_{1}}$ we get an $X_{1}\in \infSub{X_{0}/n_{1}}$ such that either $F_{n_{1}}|X_{1}\subseteq S_{n_{1}}$, or $F_{n_{1}}|X_{1}\cap S_{n_{1}}=\emptyset$. Continuing in this fashion, we obtain a sequence $X_{k}$ together with $n_{k}=\min X_{k-1}$ such that for all $k$ we have $X_{k}\in \infSub{X_{k-1}/n_{k}}$ and
\[
\text{either }F_{n_{k}}|X_{k}\subseteq S_{n_{k}}, \quad\text{or }F_{n_{k}}|X_{k}\cap S_{n_{k}}=\emptyset.
\]
Now there exists $Y\in \infSub{\omega}$ such that either $F_{n_{k}}|X_{k}\subseteq S_{n_{k}}$ for all $k\in Y$, or $F_{n_{k}}|X_{k}\cap S_{n_{k}}=\emptyset$ for all $k\in Y$. Let $X=\{n_{k}\mid k\in Y\}$. Then $F|X$ is as desired. Indeed for all $s\in F|X$ we have $\min s=n_{k}$ for some $k\in Y$ and $s\setminus \{n_{k}\}\in F_{n_{k}}|X_{k}$. Hence by the choice of $Y$, either $s\setminus\{\min s\}\in S_{\min s}$ for all $s\in F|X$, or $s\setminus \{\min s\}\notin S_{\min s}$ for all $s\in F|X$.  Therefore either $F|X\subseteq S$ or $F|X\cap S=\emptyset$.
\end{proof}

Nash-Williams' \cref{cor:NashWill} is easily seen to be equivalent to the following statement. 

\begin{theorem}\label{corollary:SuperNW}
Let $E$ be a finite set. Then every super-sequence $f:F\to E$ admits a constant sub-super-sequence.
\end{theorem}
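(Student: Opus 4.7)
The plan is to deduce this from Nash-Williams' \cref{cor:NashWill} by a simple induction on the cardinality of $E$, using the fact that a sub-front of a sub-front is itself a sub-front (by the restriction-commutation established just before \cref{cor:NashWill}).

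For the base case, if $|E|=1$ then $f:F\to E$ is already constant, so $f\restr{F}$ is the desired constant sub-super-sequence. Suppose the statement holds for every finite set of cardinality $n$, and let $E$ be of cardinality $n+1$ and $f:F\to E$ a super-sequence. Pick any $e\in E$ and set $S=f^{-1}(\{e\})\subseteq F$. By \cref{cor:NashWill}, there exists a sub-front $F'\subseteq F$ with either $F'\subseteq S$ or $F'\cap S=\emptyset$. In the first case, $f\restr{F'}$ is constantly equal to $e$ and we are done. In the second case, $f\restr{F'}$ takes values in $E\setminus\{e\}$, a set of cardinality $n$, so the induction hypothesis applied to $f\restr{F'}:F'\to E\setminus\{e\}$ yields a sub-front $F''\subseteq F'$ on which $f$ is constant; since $F''\subseteq F'\subseteq F$, this $F''$ is a sub-front of $F$ and $f\restr{F''}$ is the desired constant sub-super-sequence.

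The only subtle point is the transitivity of the sub-front relation, but this is immediate: if $F''$ is a sub-front of $F'$ and $F'$ is a sub-front of $F$, the characterisation of sub-fronts as restrictions $F|Y$ for $Y\in\infSub{\base F}$ shows that $F''=F|\base F''$, so $F''\subseteq F$ is indeed a sub-front of $F$. No real obstacle arises; the argument is essentially a repeated application of \cref{cor:NashWill}, and conversely one recovers \cref{cor:NashWill} from the theorem by taking $E=\{0,1\}$ and $f$ the characteristic function of $S$, which is why the two statements are equivalent.
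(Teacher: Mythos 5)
Your proof is correct and is precisely the argument the paper leaves implicit when it says the theorem is \enquote{easily seen to be equivalent} to Nash-Williams' \cref{cor:NashWill}: induction on $|E|$ with repeated application of the partition theorem, plus the observation that a front contained in a front is a sub-front. Nothing to add.
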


The above result obviously does not hold in general for an infinite set $E$ (consider for example any injective super-sequence). However \textcite{pudlak1982partition} proved an interesting theorem in this context. In a different direction, the author proved with Carroy in \cite{CarroyYPFromWell} the following result where fronts are viewed as metric subspaces of the Cantor space $2^\omega$ by identifying subsets of $\omega$ with their characteristic functions; every super-sequence $f:F\to E$ in some compact metric space $E$ admits a sub-super-sequence which is \emph{uniformly continuous}.

\subsection{Multi-sequences}\label{sec:MultisSeqSimpson}

Another approach to super-sequences initiated by \textcite{simpson1985bqo} has proved very useful in the theory of better-quasi-orders. We now describe this approach and relate it to super-sequences.

Let $E$ be any set, and $f:F\to E$ be a super-sequence with $F$ a front on $X$. By the explicit definition of front for every $Y\in\infSub{X}$ there exists a unique $s\in F$ with $s\segs Y$. We can therefore define a map $f^{\upcl}:\infSub{X}\to E$ defined by $f^{\upcl}(Y)=f(s)$ where $s$ is the unique member of $F$ with $s\segs Y$.

\begin{definition}
A \emph{multi-sequence} into some set $E$ is a map $h:\infSub{X}\to E$ for some $X\in\infSub{\omega}$. A \emph{sub-multi-sequence} of $h:\infSub{X}\to E$ is a restriction of $h$ to $\infSub{Y}$ for some $Y\in\infSub{X}$.
\end{definition}

For every $X\in\infSub{\omega}$ we endow $\infSub{X}$ with the topology induced by the Cantor space, viewing subsets as their characteristic functions. As a topological space $\infSub{X}$ is homeomorphic to the Baire space $\bai$. This homeomorphism is conveniently realised via the embedding of $\infSub{X}$ into $\bai$ which maps each $Y\in \infSub{X}$ to its injective and increasing enumeration $e_{Y}:\omega \to Y$. We henceforth identify the space $\infSub{X}$ with the closed subset of $\bai$ of injective and increasing sequences in $X$. From this point of view we have a countable basis of clopen sets for $\infSub{X}$ consisting in sets of the form
\[
M_{s}=\{Y\in\infSub{X}\mid s\segs Y\}, \quad \text{for $s\in \finSub{X}$.}
\]

\begin{definition}
A multi-sequence $h:\infSub{X}\to E$ is \emph{locally constant} if for all $Y\in\infSub{X}$ there exists $s\in\finSub{X}$ such that $Y\in M_{s}$ and $h$ is constant on $M_{s}$, i.e. for every $Y\in\infSub{X}$ there exists $s\segs Y$ such that for every $Z\in\infSub{X}$, $s\segs Z$ implies $h(Z)=h(Y)$.
\end{definition}

Clearly for every super-sequence $f:F\to E$ where $F$ is a front on $X$ the map $f^{\upcl}:\infSub{X}\to E$ is locally constant. 

Conversely for any locally constant multi-sequence $h:\infSub{X}\to E$, let 
 \[
 S^{h}=\{s\in \finSub{X}\mid \text{$h$ is constant on $M_{s}$}\}.
 \]
 
 \begin{lemma}
 The set $F^{h}$ of $\segm$-minimal elements of $S^{h}$ is a front on $X$.
 \end{lemma}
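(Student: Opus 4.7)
The plan is to verify directly the three conditions of the explicit definition of a front for $F^h$. Condition (2) is immediate from $\segm$-minimality: if $s,t\in F^h$ with $s\segm t$ and $s\neq t$, then $s\segs t$ with $s\in S^h$ contradicts the minimality of $t$ in $S^h$.

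For density (Condition (3)), I would fix $X'\in\infSub{X}$. Local constancy of $h$ at $X'$ yields some $s\segs X'$ on which $h$ is constant, i.e.\ $s\in S^h$. The collection $T_s=\{u\in S^h\mid u\segm s\}$ is finite and non-empty (it contains $s$), so I pick a $\segm$-minimal element $u$ of $T_s$. A short argument shows $u$ is $\segm$-minimal in the whole of $S^h$: any $u'\in S^h$ with $u'\segs u$ would satisfy $u'\segs s$, so $u'\in T_s$, contradicting the minimality of $u$ inside $T_s$. Thus $u\in F^h$, and since $u\segm s\segs X'$ with $u$ finite and $X'$ infinite, we obtain $u\segs X'$, as required.

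For Condition (1), I would split on whether $\emptyset\in S^h$. If so, then $\emptyset$ lies $\segm$-below every element of $S^h$, so it is the unique $\segm$-minimal element, giving $F^h=\{\emptyset\}$ (the trivial front). Otherwise every $u\in F^h$ is non-empty, so $\base F^h\subseteq X$ holds trivially, and for the reverse inclusion I would apply density to $X'=\{n\}\cup X/n\in\infSub{X}$ for each $n\in X$: the resulting $u\in F^h$ with $u\segs X'$ must satisfy $\min u=\min X'=n$, whence $n\in u\subseteq\base F^h$.

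The only subtle point is inside the density step, where one must promote minimality within the finite set $T_s$ to minimality across all of $S^h$; once this small observation is in hand, the rest reduces to routine checks of the definition.
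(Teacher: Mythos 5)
Your proof is correct and takes essentially the same route as the paper's: you verify the three clauses of the explicit definition directly, obtaining density by passing from the $s\in S^{h}$ supplied by local constancy to a $\segm$-minimal element of $S^{h}$ below it, and handling clause (1) via the dichotomy between $h$ constant and $F^{h}$ consisting of non-empty sets. The only difference is that you make explicit the existence of that minimal element (via the finiteness of the set of initial segments of $s$), a step the paper leaves implicit.
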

 \begin{proof}
 By $\segm$-minimality if $s,t\in F^{h}$ and $s\segm t$, then $s=t$. For every $Y\in\infSub{X}$, since $h$ is locally constant there exists $s\segs Y$ such that $h$ is constant on $M_{s}$. Hence there exists $t\in F^{h}$ with $t\segm s$, and so $t\segs Y$ too. To see that either $F^{h}$ is trivial or $\base F^{h}=X$, notice that $h$ is constant if and only if $F^{h}$ is the trivial front if and only if $\emptyset\in F_{h}$. So if $F^{h}$ is not trivial, then for every $n\in X$ there exists $s\in F^{h}$ with $s\segs \{n\}\cup X/n$ and since $s\neq\emptyset$, we get $n\in s$ and $n\in \base F^{h}$.  
 \end{proof}

We can therefore associate to every locally constant multi-sequence $h:\infSub{X}\to E$ a super-sequence $h^{\dcl}:F^{h}\to E$ by letting, in the obvious way, $h^{\dcl}(s)$ be equal to the unique value taken by $h$ on $M_{s}$ for every $s\in F^{h}$.

\begin{remark}
Clearly every front arises as an $F^{h}$ for some locally constant multi-sequence $h$. Indeed for any front $F$ and any injective super-sequence $f$ from $F$, we have $F=F^{f^{\upcl}}$. Therefore we can think of the definition of a front as a characterisation of those families of finite subsets of $\omega$ arising as an $F^{h}$ for some locally constant multi-sequence $h$.
\end{remark}

The basic properties of the correspondence $h\mapsto h^{\dcl}$ and $f\mapsto f^{\upcl}$ are easily stated with the help of the following partial order among super-sequences in a given set.
 
 \begin{definition}
Let both $F$ and $G$ be fronts on the same set $X\in\infSub{\omega}$ and $f:F\to E$ and $g:G\to E$ be any maps. We write $f\segm g$ when
\begin{enumDefn}
\item for every $s\in F$ there exists $t\in G$ with $s\segm t$, and
\item for every $s\in F$ and every $t\in G$, $s\segm t$ implies $f(s)=g(t)$.
\end{enumDefn} 
\end{definition}

To simplify notation we write $\check{f}:\check{F}\to E$ instead of $(f^{\upcl})^{\dcl}:F^{f^{\upcl}}\to E$.

 \begin{fact}\label{fac SpareIntOperator}
 Let $X\in\infSub{\omega}$ and $E$ be a set.
 \begin{enumThm}
 \item for every front $F$ on $X$ and every map $f:F\to E$, the map $\check{f}:\check{F}\to E$ is such that $\check{f}\segm f$.

 \item for every fronts $F$ and $G$ on $X$ and maps $f:F\to E$ and $g:G\to E$, $f\segm g$ implies $f^{\upcl}=g^{\upcl}$.
 \item for every locally constant map $h:\infSub{X}\to E$, we have $(h^{\dcl})^{\upcl}=h$.
 \end{enumThm}
 \end{fact}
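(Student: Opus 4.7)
The plan is to unwind all three parts directly from the definitions, invoking two standing facts about fronts: by the explicit definition, for every front $F$ on $X$ and every $Y\in\infSub{X}$ there is a \emph{unique} $s\in F$ with $s\segs Y$; and by clause \cref{def Front2} any two distinct elements of a front are $\segm$-incomparable. I will also repeatedly use that, by construction, $f^{\upcl}$ is constant on $M_{s}$ with value $f(s)$ for every $s\in F$, which in particular gives $F\subseteq S^{f^{\upcl}}$.

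For (i), the key observation is the inclusion $F\subseteq S^{f^{\upcl}}$ above. To check $\check f\segm f$, I take $s\in\check F=F^{f^{\upcl}}$ and any $Y\in M_{s}$; let $t\in F$ be the unique element with $t\segs Y$. Then $s$ and $t$ are both initial segments of $Y$, hence $\segm$-comparable; the possibility $t\segs s$ would place $t$ strictly below $s$ inside $S^{f^{\upcl}}$, contradicting the $\segm$-minimality of $s$. Hence $s\segm t$, giving clause (1) of the definition of $\segm$ on super-sequences. Clause (2) is then immediate: if $s\segm t\in F$, any $Y\in M_{t}\subseteq M_{s}$ yields $\check f(s)=f^{\upcl}(Y)=f(t)$.

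For (ii), assume $f\segm g$ and let $Y\in\infSub{X}$; let $s\in F$ and $u\in G$ be the unique members with $s,u\segs Y$, so that $f^{\upcl}(Y)=f(s)$ and $g^{\upcl}(Y)=g(u)$. Being initial segments of a common $Y$, $s$ and $u$ are $\segm$-comparable. If $s\segm u$, clause (2) of $f\segm g$ directly yields $f(s)=g(u)$; if instead $u\segs s$, clause (1) furnishes some $t\in G$ with $s\segm t$, so that $u\segs s\segm t$ with $u,t\in G$, contradicting the antichain property of the front $G$. Part (iii) is a one-line unwinding: for locally constant $h$ and any $Y\in\infSub{X}$, the unique $s\in F^{h}$ with $s\segs Y$ satisfies $Y\in M_{s}$, so $(h^{\dcl})^{\upcl}(Y)=h^{\dcl}(s)$, which by construction is the common value of $h$ on $M_{s}$, namely $h(Y)$.

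The only mildly delicate point is excluding the \enquote{wrong} comparability case in parts (i) and (ii); in both instances this is closed off by the antichain property of fronts under $\segm$, so I do not expect a real obstacle.
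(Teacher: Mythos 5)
Your proof is correct, and it is the natural unwinding of the definitions; the paper states this Fact without proof, treating it as a routine verification, and your argument supplies exactly the expected details. In particular you correctly identify and handle the only delicate points: the inclusion $F\subseteq S^{f^{\upcl}}$ in (i), and the exclusion of the wrong comparability case in (i) and (ii) via $\segm$-minimality and the antichain property of fronts, respectively.
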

 
 It follows that for every locally constant multi-sequence $h:\infSub{X}\to E$ the super-sequence $h^{\dcl}:F^{h}\to E$ is the minimal element for $\segm$ among the set of super-sequences $g:G\to E$ with $g^{\upcl}=h$. Moreover for every super-sequence $f:F\to E$ the super-sequence $\check{f}:\check{F}\to E$ is the $\segm$-minimal among the super-sequences $g$ with $g\segm f$. In particular $\check{\check{f}}=\check{f}$ for every super-sequence $f$.
 
The super-sequences which are $\segm$-minimal sometimes play a role and we now give them a name.
 
 \begin{definition}
 Let $E$ be a set and $F$ a front on $X$. A super-sequence $f:F\to E$ is said to be \emph{spare} if $f$ is minimal for $\segm$, or equivalently $\check{f}=f$, i.e. if $\check{F}=F$.
 \end{definition}

\begin{example}
If $F$ is a non trivial front and $c:F\to E$ is constant equal to $e\in E$ then $c$ is not spare and of course $\check{c}:\{\emptyset\}\to E$, $\emptyset\mapsto e$.
\end{example}

The following is a simple characterisation of spare super-sequences.

\begin{lemma} \label{lem Spare}
Let $f:F\to E$ be a super-sequence in some set $E$. Then the following are equivalent
\begin{enumThm}
\item $f$ is spare,
\item Whenever $t\in F$ and $s\segs t$, then there exists $t'\in F$ with $s\segs t'$ and $f(t)\neq f(t')$.
\end{enumThm} 
\end{lemma}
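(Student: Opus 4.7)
The plan is to unpack the equivalence $\check f = f$ as a statement about where $f^{\upcl}$ is constant. Recall that $\check F = F^{f^{\upcl}}$ consists of the $\segm$-minimal elements of $S^{f^{\upcl}} = \{s \in \finSub{X} \mid f^{\upcl}\text{ is constant on } M_s\}$, and $\check f(s)$ is the common value that $f^{\upcl}$ takes on $M_s$. A useful preliminary observation is that $F \subseteq S^{f^{\upcl}}$: for every $t \in F$ and every $Y \in M_t$, the unique element of $F$ that is an initial segment of $Y$ must be $t$ itself by \cref{def Front2}, so $f^{\upcl}(Y) = f(t)$, and in particular $f^{\upcl}$ is constant on $M_t$ with value $f(t)$.

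For (ii)$\Rightarrow$(i), I would establish $F = F^{f^{\upcl}}$ through two inclusions. For $F \subseteq F^{f^{\upcl}}$, fix $t \in F$ and suppose some $s \segs t$ lay in $S^{f^{\upcl}}$; apply (ii) to produce $t' \in F$ with $s \segs t'$ and $f(t') \neq f(t)$. Then $Y_1 = t \cup (X/t)$ and $Y_2 = t' \cup (X/t')$ both lie in $M_s \cap \infSub{X}$, yet $f^{\upcl}(Y_1) = f(t) \neq f(t') = f^{\upcl}(Y_2)$, contradicting constancy on $M_s$; hence $t$ is $\segm$-minimal in $S^{f^{\upcl}}$. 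For $F^{f^{\upcl}} \subseteq F$, take $r \in F^{f^{\upcl}}$ and any $Y \in M_r \cap \infSub{X}$ with unique initial segment $t \in F$. The finite sets $r$ and $t$ are both initial segments of $Y$ and hence $\segm$-comparable: if $t \segs r$ then minimality of $r$ fails since $t \in S^{f^{\upcl}}$, and if $r \segs t$ a second application of (ii) furnishes the same failure of constancy on $M_r$ as above; so $r = t \in F$.

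For (i)$\Rightarrow$(ii), assume $f$ is spare. Given $t \in F$ and $s \segs t$, the minimality of $t$ in $S^{f^{\upcl}} = F^{f^{\upcl}}\cup\{\text{non-minimal}\}$ forces $s \notin S^{f^{\upcl}}$, so $f^{\upcl}$ is not constant on $M_s$. Since $Y_1 = t \cup (X/t) \in M_s$ realises the value $f(t)$, choose $Y_2 \in M_s$ with $f^{\upcl}(Y_2) \neq f(t)$ and let $t' \in F$ be its initial segment in $F$; then $f(t') \neq f(t)$. Both $s$ and $t'$ are initial segments of $Y_2$ and thus $\segm$-comparable, and $s \in F$ would contradict \cref{def Front2} (since $s \segs t \in F$), so $s \segs t'$ as required. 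The only delicate point worth flagging as the main obstacle is this repeated bookkeeping with $\segm$-comparability: one must consistently invoke the front condition to rule out accidental membership of $s$ or $r$ in $F$, and to realise prescribed values of $f^{\upcl}$ one systematically exploits the canonical infinite extensions $t \cup (X/t)$.
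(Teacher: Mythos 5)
Your proof is correct and follows essentially the same route as the paper's: both arguments unpack spareness as the identity $F=F^{f^{\upcl}}$ and translate clause (ii) into the (non-)constancy of $f^{\upcl}$ on the basic sets $M_{s}$, the paper doing this as a compact double contraposition while you spell out the two inclusions with explicit witnesses $t\cup(X/t)$. The only point to tidy is in (i)$\Rightarrow$(ii): besides excluding $s=t'$ via $s\notin F$, you should also note that $t'\segs s$ is impossible, since $t'\segs s\segs t$ would give $t'\segs t$ with $t',t\in F$, contradicting \cref{def Front2}.
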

\begin{proof}
Suppose that $s\segs t \in F$ and $f(t)=f(t')$ for every $t'\in F$ with $s\segm t'$. It follows that $f^{\upcl}$ is constant on $M_{s}$ but so $t\notin \check{F}$ and therefore $f$ is not spare.

Conversely if $f$ is not spare, then there exists some $t\in F$ which is not $\segm$-minimal in $F^{f^{\upcl}}$. This means that there is $s\in \check{F}$ with $s\segs t$ and $f^{\upcl}$ is constant on $M_{s}$, so for every $t'\in F$ with $s\segm t'$ we have $f(t)=f(t')$.
\end{proof}

\subsection{Iterated powerset, determinacy of finite games}\label{sec bqoPower}
 
  \begin{quote}
{\itshape It also transpires that if, by a certain fairly natural extension of our definition of \textnormal{[$\mathcal{P}^{n}(Q)$]}, we define \textnormal{[$\mathcal{P}^{\alpha}(Q)$]} for every ordinal $\alpha$, then $Q$ is bqo iff \textnormal{[$\mathcal{P}^{\alpha}(Q)$]} is wqo for every ordinal $\alpha$. To justify these statements would not be relevant here, but it was from this point of view that the author was first led to study bqo sets.}
 
 \flushright Crispin St. John Alvah Nash-Williams \cite[p. 700]{nash1965welltrees}
 \end{quote}

\medskip
Following in Nash-Williams' steps, we introduce the notion of better-quasi-orders as the quasi-orders whose iterated powersets are \wqo{}. We do this in the light of further developments of the theory, taking advantage of Simpson's point of view on super-sequences, using the determinacy of finite games and a powerful game-theoretic technique invented by Tony Martin.     

First let us define precisely the iterated powerset of a qo together with its lifted quasi-order. To facilitate the following discussion we focus on the non-empty sets over some quasi-order $Q$.  Let $\mathcal{P}^{*}(A)$ denote the set of non-empty subsets of a set $A$, i.e. $\mathcal{P}^{*}(A)=\mathcal{P}(A)\setminus \{\emptyset\}$. We define by transfinite recursion 
\begin{align*}
\PowerQ_{0}(Q)&=Q\\
\PowerQ_{\alpha+1}(Q)&=\mathcal{P}^{*}(\PowerQ_{\alpha}(Q))\\
\PowerQ_{\lambda}(Q)&=\bigcup_{\alpha<\lambda}\PowerQ_{\alpha}(Q), \quad \text{for $\lambda$ limit.}
\end{align*}
We treat the element of $Q$ as \emph{urelements} or \emph{atoms}, namely they have no elements but they are different from the empty set. 
Let
\[
\PowerQ(Q)=\bigcup_{\alpha} \PowerQ_{\alpha}(Q).
\]

Let us define the \emph{support} of $X\in\PowerQ(Q)$, denoted by $\supp(X)$, by induction on the membership relation as follows: if $q\in Q$, then $\supp(q)=\{q\}$, otherwise let $\supp(X)=\bigcup \{ \supp(x)\mid x\in X\}$. Notice that for every subset $X$ of $Q$ we actually have $\supp(X)=X$.


Following an idea of \textcite{forster2003better} we define the quasi-order on $\PowerQ(Q)$ via the existence of a winning strategy in a natural game. We refer the reader to \textcite[(20.)]{kechris1995classical} for the basic definitions pertaining to two-player games with perfect information.

\begin{definition}
For every $X,Y\in\PowerQ(Q)$ we define a two-player game with perfect information $\GPowerQ (X,Y)$ by induction on the membership relation. The game $\GPowerQ (X,Y)$ goes as follows. Player $\I$ starts by choosing some $X'$ such that:
\begin{itemize}
\item if $X\notin Q$, then $X'\in X$,
\item otherwise, $X'=X$.
\end{itemize}
Then Player $\II$ replies by choosing some $Y'$ such that:
\begin{itemize}
\item if $Y\notin Q$, then $Y'\in Y$,
\item otherwise $Y'=Y$.
\end{itemize}
If both $X'$ and $Y'$ belong to $Q$, then Player $\II$ wins if $X'\leq Y'$ in $Q$ and Player $\I$ wins if $X'\nleq Y'$ . Otherwise the game continues as in $\GPowerQ (X',Y')$. 
\end{definition}

We then define the lifted quasi-order on $\PowerQ(Q)$ by letting for $X,Y\in\PowerQ(Q)$
\[
X\leq Y \SSI \text{Player $\II$ has a winning strategy in $\GPowerQ (X,Y)$.}
\]

\begin{remark}\label{rem PowerDownset}
The above definition can be rephrased by induction on the membership relation as follows:
\begin{enumDefn}
\item if $X,Y\in Q$, then $X\leq Y$ if and only if $X\leq Y$ in $Q$,
\item if $X\in Q$ and $Y\notin Q$, then 
\[
X\leq Y \SSI \text{there exists $Y'\in Y$ with $X\leq Y'$},
\]
\item if $X\notin Q$ and $Y\in Q$, then \label{def QoPowerQ}
\[
X\leq Y \SSI \text{for every $X'\in X$ we have $X'\leq Y$},
\]
\item if $X\notin Q$ and $Y\notin Q$, then 
\[
X\leq Y \SSI \text{for every $X'\in X$ there exists $Y'\in Y$ with $X'\leq Y'$}.
\]
\end{enumDefn}
Our definition coincides with the one given by \textcite[Claim 1.7, p.188]{shelah1982better}. But  \textcites{milner1985basic}{laverfraisse} both omit condition \cref{def QoPowerQ}. 
\end{remark}

The axiom of foundation ensures that in any play of a game $\GPowerQ (X,Y)$ a round where both players have chosen elements of $Q$ is eventually reached, resulting in the victory of one of the two players. In particular, each game $\GPowerQ (X,Y)$ is determined as already proved by \textcite{von1944theory} (see \cite[(20.1)]{kechris1995classical}).
The crucial advantage of the game-theoretic formulation of the quasi-order on $\PowerQ(Q)$ resides in the fact that the negative condition $X\nleq Y$ is equivalent to the existential statement \enquote{Player $\I$ has a winning strategy}. 

Now suppose $Q$ is a quasi-order such that $\PowerQ(Q)$ is not \wqo{} and let $(X_{n})_{n\in\omega}$ be a bad sequence in $\PowerQ(Q)$. Whenever $m<n$ we have $X_{m}\nleq X_{n}$ and we can choose a winning strategy $\sigma_{m,n}$ for Player $\I$ in $\GPowerQ (X_{m},X_{n})$. We define a locally constant multi-sequence $g:\infSub{\omega}\to Q$ as follows. Let $N=\{n_{0},n_{1},n_{2},\ldots \}$ be an infinite subset of $\omega$ enumerated in increasing order. We define $g(N)$ as the last move of Player $\I$ in a particular play of $\GPowerQ (X_{n_{0}},X_{n_{1}})$ in a way best understood by contemplating \cref{fig ConstrMultiSeq}. 
\begin{figure}
\begin{center}
\includegraphics{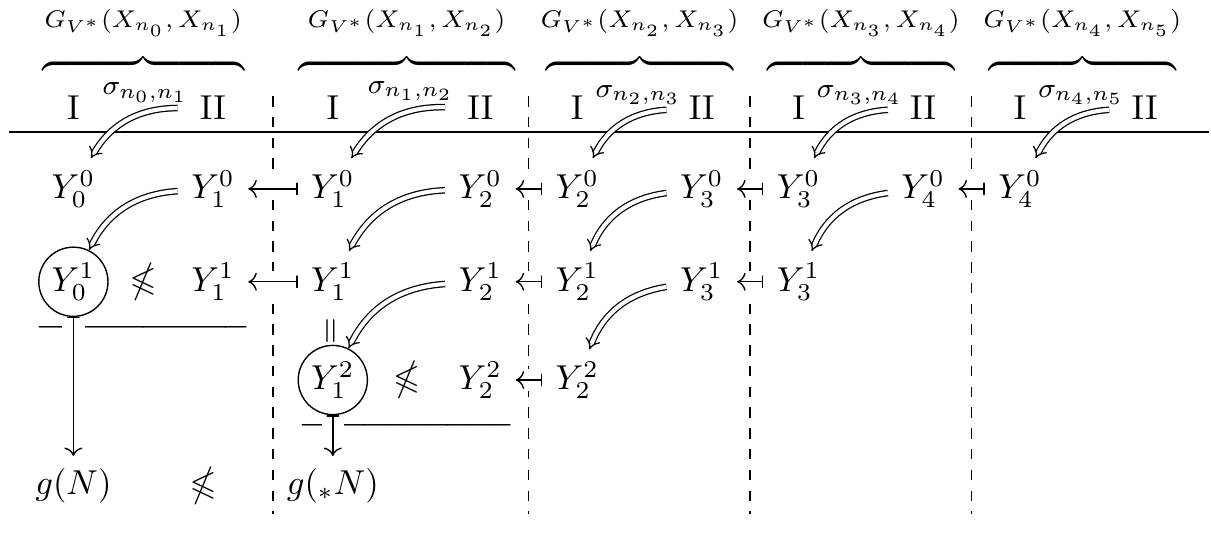} 
\end{center}
\caption{Constructing a multi-sequence by stringing strategies together.}
\label{fig ConstrMultiSeq}
\end{figure}

Let $Y^{0}_{0}$ be the the first move of Player $\I$ in $\GPowerQ (X_{n_{0}},X_{n_{1}})$ as prescribed by its winning strategy $\sigma_{n_{0},n_{1}}$. Then let Player $\II$ copy the first move $Y^{0}_{1}$ of Player $\I$ given by the strategy $ \sigma_{n_{1},n_{2}}$ in $\GPowerQ (X_{n_{1}},X_{n_{2}})$. Then Player $\I$ answers $Y^{1}_{0}$ according to the strategy $\sigma_{n_{0},n_{1}}$. Now if $Y^{0}_{1}$ is not in $Q$, then we need to continue our play of $\GPowerQ (X_{n_{1}},X_{n_{2}})$ a little further to determine the second move of Player $\II$ in $\GPowerQ (X_{n_{0}},X_{n_{1}})$. Let the first move of Player $\II$ in $\GPowerQ (X_{n_{1}},X_{n_{2}})$ be the first move of Player $\I$ in $\GPowerQ (X_{n_{2}},X_{n_{3}})$ as prescribed by his winning strategy $\sigma_{n_{2},n_{3}}$. Then this determines the second move $Y^{1}_{1}$ of Player $\I$ in $\GPowerQ (X_{n_{1}},X_{n_{2}})$ according to $\sigma_{n_{1},n_{2}}$. We then let the second move of Player $\II$ in $\GPowerQ (X_{n_{0}},X_{n_{1}})$ to be this $Y^{1}_{1}$. This yields some answer $Y^{1}_{0}$ of Player $\I$ according to $\sigma_{n_{0},n_{1}}$. We continue so on and so forth until the play of $\GPowerQ (X_{n_{0}},X_{n_{1}})$ reaches an end with some $(Y^{k_{N}}_{0},Y^{k_{N}}_{1})\in Q\times Q$ and we let $g(N)=Y^{k_{N}}_{0}$.  Since the play of $\GPowerQ (X_{n_{0}},X_{n_{1}})$ is finite, $g(N)$ depends only on a finite initial segment of $N$ and we have therefore defined a locally constant multi-sequence $g:\infSub{\omega}\to Q$. 

Now since Player $\I$ has followed the winning strategy $\sigma_{n_{0},n_{1}}$ we have $Y^{k_{N}}_{0}\nleq Y^{k_{N}}_{1}$. Now if the play of the game $\GPowerQ (X_{n_{1}},X_{n_{2}})$ has not yet reached an end at step $k_{N}$ we go on in the same fashion. Assume it ends with some pair $(Y^{l}_{1},Y^{l}_{2})$ in $Q$. By the rules of the game $\GPowerQ $, since $Y_{1}^{k_{N}}\in Q$ we necessarily have $Y^{l}_{1}=Y^{k_{N}}_{1}$. But $Y^{l}_{1}$ is just $g(\{n_{1},n_{2},n_{3},\ldots\})$, hence for every $N\in\infSub{\omega}$ we have
\[
g(N)\nleq g\big(N\setminus \{\min N\}\big).
\]

For every $N\in\infSub{\omega}$ we call the \emph{shift of $N$}, denoted by $\shift{N}$, the set $N\setminus\{\min N\}$. We are led to the following:
\begin{definition}
Let $Q$ be a qo and $h:\infSub{X}\to Q$ a multi-sequence.
\begin{enumDefn}
\item We say that $h$ is \emph{bad} if $h(N)\nleq h(\shift{N})$ for every $N\in\infSub{X}$,
\item We say that $h$ is \emph{good} if there exists $N\in\infSub{X}$ with $h(N)\leq h(\shift{N})$,
\end{enumDefn}
\end{definition}

At last, we present the deep definition due to Nash-Williams here in a modern reformulation.

 \begin{definition}
A quasi-order $Q$ is a \emph{better-quasi-order} (\bqo{}) if there is no bad locally constant multi-sequence in $Q$.
 \end{definition}

Of course the definition of better-quasi-order can be formulated in terms of super-sequences as Nash-Williams originally did.
The only missing ingredient is a counterpart of the shift map $N\mapsto \shift{N}$ on finite subsets of natural numbers.

\begin{definition}
For $s,t\in\finSub{\omega}$ we say that $t$ is a shift of $s$ and write $s\tri t$ if there exists $X\in\infSub{\omega}$ such that 
\[s\segs X\mbox{ and }t\segs \shift{X}.\]
\end{definition}

\begin{definitions} 
Let $Q$ be a qo and $f:F\to Q$ be a super-sequence.
\begin{enumDefn}
\item We say that $f$ is \emph{bad} if whenever $s\tri t$ in $F$, we have $f(s)\nleq f(t)$.
\item We say that $f$ is \emph{good} if there exists $s,t\in F$ with $s\tri t$ and $f(s)\leq f(t)$.
\end{enumDefn}
\end{definitions}

 \begin{lemma}\label{badSimpsonNW}
 Let $Q$ be a quasi-order. 
 \begin{enumThm}
 \item If $h:\infSub{\omega}\to Q$ is locally constant and bad, then $h^{\dcl}:F^{h}\to Q$ is a bad super-sequence. \label{badSimpsonNW1}
 \item If $f:F\to Q$ is a bad super-sequence from a front on $X$, then $f^{\upcl}:\infSub{X}\to Q$ is a bad locally constant multi-sequence.\label{badSimpsonNW2}
 \end{enumThm}
 \end{lemma}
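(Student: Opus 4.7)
The plan is to prove both items by straightforward unpacking of the relevant definitions, exploiting the observation that the shift operation $N\mapsto\shift N$ on $\infSub\omega$ corresponds precisely to the relation $\tri$ on fronts under the correspondences $h\mapsto h^{\dcl}$ and $f\mapsto f^{\upcl}$.

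For item \cref{badSimpsonNW1}, I would fix $s,t\in F^{h}$ with $s\tri t$ and aim to show $h^{\dcl}(s)\nleq h^{\dcl}(t)$. By definition of $\tri$ there exists $X\in\infSub{\omega}$ with $s\segs X$ and $t\segs \shift X$, so $X\in M_{s}$ and $\shift X\in M_{t}$. Since $s,t\in F^{h}$, the multi-sequence $h$ is constant on $M_{s}$ and on $M_{t}$, with respective values $h^{\dcl}(s)$ and $h^{\dcl}(t)$. Hence $h(X)=h^{\dcl}(s)$ and $h(\shift X)=h^{\dcl}(t)$, and the badness of $h$ then yields $h^{\dcl}(s)=h(X)\nleq h(\shift X)=h^{\dcl}(t)$, as required.

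For item \cref{badSimpsonNW2}, I would first verify that $f^{\upcl}$ is locally constant: given any $Y\in\infSub{X}$, the density clause \cref{def Front3} together with the antichain clause \cref{def Front2} of the explicit definition of a front provides a unique $s\in F$ with $s\segs Y$; since $s\segs Z$ holds for every $Z\in M_{s}$ and this $s$ is again the unique such element of $F$ by the same uniqueness argument, one obtains $f^{\upcl}(Z)=f(s)=f^{\upcl}(Y)$ on all of $M_{s}$. For badness, fix any $Y\in\infSub{X}$; since $Y\subseteq X$ the set $\shift Y$ is again an infinite subset of $X$. Let $s,t\in F$ be the unique elements with $s\segs Y$ and $t\segs \shift Y$. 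Then $Y$ itself witnesses $s\tri t$ in $F$, and the badness of $f$ gives $f^{\upcl}(Y)=f(s)\nleq f(t)=f^{\upcl}(\shift Y)$.

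No step presents a real obstacle: the lemma essentially asserts that the passage between super-sequences on fronts and locally constant multi-sequences on $\infSub X$ is faithful with respect to badness. The only mild point of attention, in \cref{badSimpsonNW2}, is that the uniqueness provided by clause \cref{def Front2} of the explicit definition of a front is precisely what underwrites both the local constancy and the well-definedness of $f^{\upcl}$.
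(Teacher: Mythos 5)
Your proposal is correct and follows essentially the same route as the paper: in both directions one unpacks the correspondence between the shift $N\mapsto\shift{N}$ and the relation $\tri$, using that $h$ is constant on $M_{s}$ for $s\in F^{h}$ in one direction and the uniqueness of the initial segment in $F$ of a given infinite set in the other. The only difference is that you additionally spell out the local constancy of $f^{\upcl}$, which the paper had already noted separately before the lemma.
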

\begin{proof}
\begin{itemize} 
\item[\cref{badSimpsonNW1}] Suppose $h:\infSub{X}\to Q$ is locally constant and bad. Let us show that $h^{\dcl}:F^{h}\to Q$ is bad. If $s,t\in F^{h}$ with $s\tri t$, i.e. there exists $Y\in\infSub{X}$ such that $s\segs Y$ and $t\segs \shift{Y}$. Then $h^{\dcl}(s)=h(Y)$ and $h^{\dcl}(t)=h(\shift{Y})$ and since $h$ is assumed to be bad, we have $h^{\dcl}(s)\nleq h^{\dcl}(t)$.
 
\item[\cref{badSimpsonNW2}] Suppose $f:F\to Q$ is bad from a front on $X$ and let $Y\in\infSub{X}$. There are unique $s,t\in F$ such that $s\segs Y$ and $t\segs \shift{Y}$, and clearly $f^{\upcl}(Y)=f(s)$, $f^{\upcl}(\shift{Y})=f(t)$, and $s\tri t$. Therefore $f^{\upcl}(Y)\nleq f^{\upcl}(\shift{Y})$ holds. \qedhere
\end{itemize}
 \end{proof}

 \begin{proposition}\label{cor WlogSpare}
For a quasi-order $Q$ the following are equivalent.
\begin{enumThm}
\item $Q$ is a better-quasi-order,
\item there is no bad super-sequence in $Q$,
\item there is no bad spare super-sequence in $Q$.
\end{enumThm}
 \end{proposition}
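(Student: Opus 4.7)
The plan is to prove (i) $\Leftrightarrow$ (ii) directly from \cref{badSimpsonNW}, then observe (ii) $\Rightarrow$ (iii) is trivial (since every spare super-sequence is a super-sequence), and finally prove the only nontrivial implication (iii) $\Rightarrow$ (ii) by showing that if $f:F\to Q$ is a bad super-sequence, then its spare companion $\check{f}:\check{F}\to Q$ is again bad (and spare by \cref{fac SpareIntOperator}).

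For (i) $\Leftrightarrow$ (ii), I would argue by contraposition in both directions. If $Q$ is not bqo, pick a bad locally constant multi-sequence $h$; then by \cref{badSimpsonNW}\cref{badSimpsonNW1}, $h^{\dcl}$ is a bad super-sequence. Conversely, if $f:F\to Q$ is a bad super-sequence with $F$ a front on $X$, then by \cref{badSimpsonNW}\cref{badSimpsonNW2}, $f^{\upcl}:\infSub{X}\to Q$ is a bad locally constant multi-sequence, so $Q$ is not bqo.

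For (iii) $\Rightarrow$ (ii), suppose $f:F\to Q$ is bad. First observe that $f$ must be non-constant (otherwise $f(s)\leq f(t)$ for any $s\tri t$), so by the construction of $\check{F}=F^{f^{\upcl}}$, $\check{F}$ is non-trivial and $\base\check{F}=\base F=X$. Now take any $s,t\in\check{F}$ with $s\tri t$, witnessed by some $Y\in\infSub{\omega}$. I first pass to a witness inside $X$: since $s,t\subseteq X$, the set $Y'=s\cup t\cup\{x\in X\mid x>\max(s\cup t)\}$ is an infinite subset of $X$ whose initial segments agree with $Y$ up to $\max(s\cup t)$, so $s\segs Y'$ and $t\segs \shift{Y'}$ still hold. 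Let $u,v\in F$ be the unique elements with $u\segs Y'$ and $v\segs \shift{Y'}$. Then $u\tri v$ in $F$, and since $f$ is bad, $f(u)\nleq f(v)$. By the definition of $\check{f}=(f^{\upcl})^{\dcl}$, we have $\check{f}(s)=f^{\upcl}(Y')=f(u)$ and $\check{f}(t)=f^{\upcl}(\shift{Y'})=f(v)$. Therefore $\check{f}(s)\nleq\check{f}(t)$, proving $\check{f}$ is bad. Spareness of $\check{f}$ is exactly the content of $\check{\check{f}}=\check{f}$, already noted after \cref{fac SpareIntOperator}.

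The main obstacle I anticipate is the routine but easy-to-mishandle bookkeeping in the step (iii) $\Rightarrow$ (ii): one must ensure the witnessing set $Y$ for $s\tri t$ in $\check{F}$ can be chosen inside the base $X$ of $F$ so that $u,v\in F$ actually exist and witness $u\tri v$. Once this is observed, the remainder is a direct unwinding of the definitions of $f^{\upcl}$ and $\check{f}$ via the identities in \cref{fac SpareIntOperator}.
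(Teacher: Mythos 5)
Your proof is correct, and it follows the route the paper intends: the proposition is stated as an immediate consequence of \cref{badSimpsonNW} together with the properties of the operators $f\mapsto f^{\upcl}$ and $h\mapsto h^{\dcl}$ recorded in \cref{fac SpareIntOperator}. The only place you do extra work is the hands-on verification in (iii)$\Rightarrow$(ii) that $\check{f}$ is bad, which can be obtained more quickly by chaining the two halves of \cref{badSimpsonNW}: $f$ bad gives $f^{\upcl}$ bad, which gives $\check{f}=(f^{\upcl})^{\dcl}$ bad, and $\check{f}$ is spare since $\check{\check{f}}=\check{f}$.
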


 The idea of stringing strategies together that we used to arrive at the definition of \bqo{} is directly inspired from a famous technique used by \textcite[Theorem 3.2]{van1987rigid} together with \textcite[Theorem 3]{louveau1990quasi}. This method was first applied by Martin in the proof of the well-foundedness of the Wadge hierarchy (see \cite[(21.15), p. 158]{kechris1995classical}). \Textcite{forster2003better} introduces better-quasi-orders in a very similar way, but a super-sequence instead of a multi-sequence is constructed, making the similarity with the method used by \textcite{van1987rigid,louveau1990quasi} less obvious. One of the advantages of multi-sequences resides in the fact that they enable us to work with super-sequences without explicitly referring to their domains. This is particularly useful in the above construction, since a bad sequence in $\PowerQ(Q)$ can yield a multi-sequence whose underlying front is of arbitrarily large rank.  Indeed \textcite{marcone1994foundations} showed that super-sequences from fronts of arbitrarily large rank are required in the definition of \bqo{}.

Notice that the notion of \bqo{} naturally lies between those of well-orders and \wqo.

\begin{proposition}\label{prop WellOrderWqoBqo}
Let $Q$ be a qo. Then
\[
\text{$Q$ is a well-order} \quad\to\quad \text{$Q$ is \bqo{}}\quad \to\quad \text{$Q$ is \wqo{}.}
\]
\end{proposition}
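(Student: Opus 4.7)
Both implications will be proved by contradiction, using \cref{cor WlogSpare} as the bridge: $Q$ is \bqo{} iff there is no bad super-sequence in $Q$.

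For the implication \bqo{}~$\to$~\wqo{}, I would lift a bad sequence of length $\omega$ to a bad super-sequence on the front $[\omega]^{1}$. Concretely, if $f:\omega\to Q$ is a bad sequence in the sense of \cref{prop:ClassicEquWqo1}, define $\tilde{f}:[\omega]^{1}\to Q$ by $\tilde{f}(\{n\})=f(n)$. The key observation is that for singletons one has $\{m\}\tri\{n\}$ if and only if $m<n$: the witness $X=\{m,n,n+1,\ldots\}$ yields $\{m\}\segs X$ and $\{n\}\segs\shift{X}$, and conversely any such $X$ forces $m=\min X$ and $n=\min\shift{X}$. Thus the badness of $f$ transfers immediately to $\tilde{f}$, contradicting that $Q$ is \bqo{}.

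For the implication well-order~$\to$~\bqo{}, I would argue that a bad super-sequence must produce an infinite descending chain. Suppose $f:F\to Q$ is a bad super-sequence; one may assume $F$ is non-trivial, since on the trivial front badness would require $f(\emptyset)\nleq f(\emptyset)$, contradicting reflexivity. Set $X_{0}=\base F$ and $X_{k+1}=\shift{X_{k}}$. The density clause in the explicit definition of a front provides, for every $k\in\omega$, a unique $s_{k}\in F$ with $s_{k}\segs X_{k}$. Since $X_{k}$ itself witnesses $s_{k}\tri s_{k+1}$, badness gives $f(s_{k})\nleq f(s_{k+1})$; because $Q$ is a linear partial order (being a well-order), this upgrades to $f(s_{k+1})<f(s_{k})$. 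The sequence $(f(s_{k}))_{k\in\omega}$ is therefore an infinite descending chain in $Q$, contradicting well-foundedness.

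The only delicate point is verifying that the shift mechanism on $\infSub{\omega}$ interacts properly with $\tri$ and with the uniqueness asserted by the density clause of the explicit definition of a front. Once this bookkeeping is in place both implications follow immediately, and no further ingredients beyond \cref{cor WlogSpare} and the explicit definition of a front are needed.
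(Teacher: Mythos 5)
Your proof is correct and follows essentially the same route as the paper's: both implications rest on iterating the shift map to extract a descending chain from a bad (super- or multi-)sequence in a well-order, and on the observation that $\{m\}\tri\{n\}$ holds iff $m<n$, which identifies bad sequences with bad super-sequences on $[\omega]^{1}$. The only cosmetic difference is that you phrase the first implication in the super-sequence language via \cref{cor WlogSpare}, whereas the paper argues directly with a multi-sequence $h$ and the iterates $X_{n+1}=\shift{X_{n}}$.
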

\begin{proof}
Suppose $Q$ is a well order and let $h:\infSub{\omega}\to Q$ is any multi-sequence in $Q$. Fix $X\in\infSub{\omega}$ and let $X_{0}=X$ and $X_{n+1}=\shift{X_{n}}$. Since $Q$ is a well-order, there exists $n$ such that $h(X_{n})\leq h(X_{n+1})$, otherwise $h(X_{n})$ would be a descending chain in $Q$. So $h$ is good and therefore $Q$ is \bqo{}.

Now observe that for $m,n\in\{\omega\}$ we have $\{m\}\tri \{n\}$ if and only if $m<n$. So if $Q$ is \bqo{}, then in particular every sequence $f:[\omega]^{1}\to Q$ is good, and so $Q$ is \wqo{}.
\end{proof}

\subsection{Equivalence}\label{subsec BqoEquiv}

Pushing further the idea that led us to the definition of \bqo{}, we can build from any bad multi-sequence in $\PowerQ(Q)$ a bad multi-sequence in $Q$. Therefore proving that if $Q$ is \bqo{}, then $\PowerQ(Q)$ is actually \bqo{}.

\begin{proposition}\label{prop ReflectLocConst}
Let $Q$ be a qo. For every bad locally constant $h:\infSub{\omega}\to \PowerQ(Q)$ there exists a bad locally constant $g:\infSub{\omega}\to Q$ such that moreover $g(X)\in \supp(h(X))$ for every $X\in\infSub{\omega}$.
\end{proposition}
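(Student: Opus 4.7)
The plan is to reuse the ``stringing strategies together'' construction that motivated the definition of \bqo{} in \cref{sec bqoPower}, applying it now directly to the bad locally constant multi-sequence $h$. Because $h$ is locally constant it factors through the countable front $F^{h}\subseteq\finSub{\omega}$, so its image is countable; in particular only countably many ordered pairs $(h(X),h(\shift{X}))$ arise as $X$ ranges over $\infSub{\omega}$. By badness each such pair satisfies $h(X)\nleq h(\shift{X})$, and by determinacy Player~$\I$ has a winning strategy in the game $\GPowerQ(h(X),h(\shift{X}))$. Invoking countable choice, I would fix one such strategy $\tau_{A,B}$ for each relevant pair $(A,B)$ and set $\sigma_{X}=\tau_{h(X),h(\shift{X})}$.

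I would then define $g(X)$ exactly as in \cref{fig ConstrMultiSeq}: play the main game $\GPowerQ(h(X),h(\shift{X}))$ with Player~$\I$ obeying $\sigma_{X}$; whenever Player~$\II$ must move, she copies Player~$\I$'s corresponding move in the adjacent game starting from $h(\shift{X})$ in which Player~$\I$ obeys $\sigma_{\shift{X}}$, which in turn may require descending into further adjacent games based on iterated shifts of $X$, and so on. The axiom of foundation ensures that all these intertwined plays reach atoms in finitely many rounds, whereupon I set $g(X)$ equal to Player~$\I$'s final move $Y_{0}$ in the main game. Since Player~$\I$'s first move is either $h(X)$ itself (when $h(X)\in Q$) or an element of $h(X)$, and each later move is obtained by further iterating the $\in$-relation or by repeating an atom, it follows directly that $g(X)\in\supp(h(X))$. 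For badness, $\sigma_{X}$ is winning, so the terminal pair $(Y_{0},Y_{1})\in Q\times Q$ satisfies $Y_{0}\nleq Y_{1}$; moreover, the rule forcing players to repeat any atom already chosen implies that the adjacent game also terminates with Player~$\I$ still at $Y_{1}$, so $g(\shift{X})=Y_{1}$ and hence $g(X)\nleq g(\shift{X})$.

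The delicate point, and the step I would write out most carefully, is the local constancy of $g$. The computation of $g(X)$ consults iterated shifts of $X$ only up to some finite depth, controlled by well-foundedness of $\in$ applied to $h(X)$ and $h(\shift{X})$ together with the fixed strategies. Since $h$ is locally constant, each of the finitely many values of $h$ used in the computation is determined by a finite initial segment of $X$, and these initial segments can be amalgamated into a single $s\segs X$ long enough that, for every $X'\in M_{s}$, all relevant values of $h$ agree on $X$ and $X'$; the same computation then yields $g(X')=g(X)$. Hence $g$ is locally constant on a neighbourhood of each $X$, which completes the construction.
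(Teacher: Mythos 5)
Your proposal is correct and follows essentially the same route as the paper's proof: choose winning strategies for Player $\I$ in the games $\GPowerQ(h(X),h(\shift{X}))$, string them together along the iterated shifts of $X$ with Player $\II$ copying moves from the adjacent game, and read off $g(X)$ as Player $\I$'s terminal move. Your explicit remark that the strategies should depend only on the pair of values $(h(X),h(\shift{X}))$ is a nice way of making precise the point the paper uses implicitly when asserting that the finite play depends only on finitely many values of $h$, which is exactly what yields local constancy.
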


\begin{proof}
Let $h:\infSub{\omega}\to \PowerQ(Q)$ be locally constant and bad, and let us write $h(X)=h_{X}$ for $X\in\infSub{\omega}$. Notice that the image of $h$ is countable and choose for every $X\in\infSub{\omega}$ a winning strategy $\sigma_{X}$ for Player $\I$ in $\GPowerQ (h_{X},h_{\shift{X}})$. We let $X_{0}=X$ and $X_{n+1}=\shift{X_{n}}$. 
\begin{figure}[ht]
\begin{center}
\includegraphics{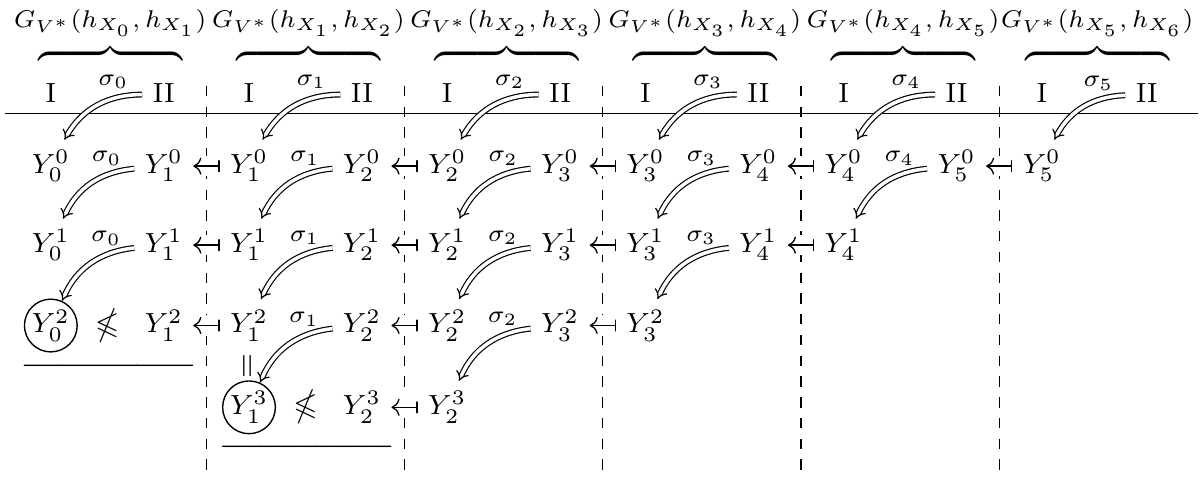}
\end{center}
\caption{Stringing strategies together.}
\label{fig PlayingCopying}
\end{figure}

Consider the diagram in \cref{fig PlayingCopying} obtained by letting Player $\I$ follow the winning strategy $\sigma_{n}=\sigma_{X_{n}}$ in $\GPowerQ (h_{X_{n}},h_{X_{n+1}})$ and $\II$ responding in $\GPowerQ (h_{X_{n}},h_{X_{n+1}})$ by copying $\I$'s moves in $\GPowerQ (h_{X_{n+1}},h_{X_{n+2}})$. This uniquely determines for each $n$ a finite play $(Y^{i}_{n},Y^{i}_{n+1})_{i \leq l_{n}}$ of the game $\GPowerQ (h_{X_{n}},h_{X_{n+1}})$ ending with some $Y^{l_{n}}_{n}\nleq Y^{l_{n}}_{n+1}$ in $Q$.  Clearly the play $(Y^{i}_{n},Y^{i}_{n+1})_{i \leq l_{n}}$ depends only on the value taken by $h$ on the $X_{j}$ with  $j\in\{n,\ldots ,n+l_{n}+2\}$. By the rules of the game $\GPowerQ$ for every $n$ we have $Y^{l_{n}}_{n+1}=Y^{l_{n+1}}_{n+1}$. We let $Y^{X}_{0}=Y^{l_{0}}_{0}$ and $Y^{X}_{n+1}=Y^{l_{n}}_{n+1}=Y^{l_{n+1}}_{n+1}$. We define $g:\infSub{\omega}\to Q$ by letting $g(X)=Y^{X}_{0}$. Since $Y^{X}_{0}$ depends only on $h_{X_{0}},\ldots h_{X_{l_{0}+2}}$ and $h$ is locally constant, it follows that $g$ is locally constant. Moreover, by construction $g(\shift{X})=Y^{\shift{X}}_{0}=Y^{X}_{1}$ and so $g(X)\nleq g(\shift{X})$.
\end{proof}

\begin{corollary}\label{cor PowerQPresBqo}
If $Q$ is \bqo{}, then $\PowerQ(Q)$ is \bqo{}.
\end{corollary}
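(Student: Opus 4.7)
The plan is to derive this corollary immediately from the preceding \cref{prop ReflectLocConst} by contraposition. Indeed, the proposition is designed precisely as the technical core of the implication we want: it converts any bad locally constant multi-sequence valued in $\PowerQ(Q)$ into a bad locally constant multi-sequence valued in $Q$ itself, with the extra support condition $g(X)\in\supp(h(X))$ (which we will not even need here).

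First I would assume, towards a contradiction or contrapositive, that $\PowerQ(Q)$ is not \bqo{}. Unfolding the definition of \bqo{} given just before, this means that there exists a bad locally constant multi-sequence
\[
h:\infSub{\omega}\to \PowerQ(Q),
\]
i.e.\ a map, locally constant when $\PowerQ(Q)$ is viewed as a discrete target, such that $h(X)\nleq h(\shift{X})$ in $\PowerQ(Q)$ for every $X\in\infSub{\omega}$.

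Next I would directly invoke \cref{prop ReflectLocConst}, which applied to this $h$ yields a bad locally constant multi-sequence $g:\infSub{\omega}\to Q$. This witnesses that $Q$ itself is not \bqo{}, again by the very definition of \bqo{} in terms of the absence of bad locally constant multi-sequences. Contraposing, if $Q$ is \bqo{} then $\PowerQ(Q)$ is \bqo{}, which is exactly the statement of the corollary.

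There is essentially no obstacle here; the entire content of the corollary is absorbed into \cref{prop ReflectLocConst}. The only subtlety worth flagging in the write-up is that one should explicitly point out that the hypotheses of \cref{prop ReflectLocConst} match: a bad locally constant multi-sequence into $\PowerQ(Q)$ is exactly what negates the \bqo{}-ness of $\PowerQ(Q)$, and a bad locally constant multi-sequence into $Q$ is exactly what negates the \bqo{}-ness of $Q$. Thus the corollary is a one-line logical consequence of the proposition.
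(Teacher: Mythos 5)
Your proof is correct and is exactly the argument the paper intends: the corollary is stated without proof precisely because it is the immediate contrapositive of \cref{prop ReflectLocConst}, which converts a bad locally constant multi-sequence in $\PowerQ(Q)$ into one in $Q$. The only (harmless) detail is that a bad multi-sequence witnessing failure of \bqo{} may a priori have domain $\infSub{X}$ for some $X\in\infSub{\omega}$ rather than $\infSub{\omega}$, but re-indexing along the increasing enumeration of $X$ reduces to the case the proposition covers.
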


We now briefly show that there is a strong converse to \cref{cor PowerQPresBqo}.

Let $f:F\to Q$ be a super-sequence from a front on $\omega$ in a qo $Q$. Remember from \cref{lem TreeFrontWellFounded}, that the tree $T(F)=\{s\in \finSub{\omega}\mid \exists t\in F \ s\segm t\}$ is well-founded. We define by recursion on the well-founded relation $\sges$ on $T(F)$ a map $\tilde{f}:T(F)\to \PowerQ(Q)$ by
\begin{align*}
\tilde{f}(s)&=f(s) &&\text{if $s\in F$,}\\
\tilde{f}(s)&=\big\{\tilde{f}(s\cup \{n\})\mid n\in\omega/s \text{ and } s\cup\{n\}\in  T(F)\big\} &&\text{otherwise.}
\end{align*}
As long as $F$ is not trivial we have $[\omega]^{1}\subseteq T(F)$ and restricting $\tilde{f}$ to $[\omega]^{1}$ we obtain the sequence $\tilde{f}\restr{[\omega]^{1}}:[\omega]^{1}\to \PowerQ(Q)$. Notice also that $\tilde{f}(s)\in Q$ if and only if $s\in F$. 

\begin{lemma}\label{lem: ConverseGame}
If $f:F\to Q$ is bad, then $\widetilde{f}\restr{[\omega]^{1}}$ is a bad sequence in $\PowerQ(Q)$.
\end{lemma}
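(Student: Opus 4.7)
The plan is to prove that, for every $m<n$, Player I has a winning strategy in the game $\GPowerQ(\widetilde{f}(\{m\}),\widetilde{f}(\{n\}))$. Since the tree $T(F)$ is well-founded by \cref{lem TreeFrontWellFounded}, this game is finite and hence determined, so such a strategy witnesses $\widetilde{f}(\{m\})\nleq\widetilde{f}(\{n\})$ in $\PowerQ(Q)$; that this holds for every $m<n$ is precisely what it means for $\widetilde{f}\restr{[\omega]^{1}}$ to be a bad sequence.

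Player I's strategy will maintain the invariant that the current position is of the form $(\widetilde{f}(s),\widetilde{f}(t))$ with $s,t\in T(F)$ satisfying $s\tri t$, and moreover $s\setminus\{\min s\}\segm t$ (in particular $|s|\leq|t|$) whenever $t\notin F$. The invariant holds initially for $(s,t)=(\{m\},\{n\})$, as witnessed by $X:=\{m,n,n+1,n+2,\dots\}$. At a position respecting the invariant, if $s\in F$ the move is forced to $f(s)$; otherwise Player I fixes a witness $X\in\infSub{\omega}$ for $s\tri t$, considers the unique $u\in F$ with $u\segs X$ (then $s\segs u$), and plays $\widetilde{f}(s\cup\{k\})$ with $k:=u_{|s|}=X_{|s|}$. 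Since $s\cup\{k\}\segm u\in F$ we have $s\cup\{k\}\in T(F)$, and $s\cup\{k\}\tri t$ via the same $X$.

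The delicate step is checking that the invariant survives Player II's adversarial response. If Player II is forced ($t\in F$), nothing more is needed. Otherwise $t\notin F$ and the running side condition gives $|s|\leq|t|$, which forces $X_{|s|}=t_{|s|-1}$, so that $(s\cup\{k\})\setminus\{\min s\}$ coincides with the first $|s|$ elements of $t$; this stays an initial segment of $t\cup\{l\}$ for every admissible $l$, so the new invariant relation $s\cup\{k\}\tri t\cup\{l\}$ and the new side condition hold, while $|s\cup\{k\}|\leq|t\cup\{l\}|$ is automatic. Tracking the possible phases of the game — both $s,t\notin F$; only $s\in F$; only $t\in F$ — one checks that any reachable position with $t\notin F$ has $|s|\leq|t|$: the equality of lengths is preserved in the first phase, and entering the phase with $s\in F$ and $t\notin F$ from the first phase produces equal lengths, after which $|t|$ can only grow.

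Since each round strictly extends $s$ or $t$ inside the well-founded tree $T(F)$, the game terminates in finitely many rounds at a position with both $s,t\in F$ and $s\tri t$; badness of $f$ then yields $f(s)\nleq f(t)$ and Player I wins. The main obstacle is precisely the preservation of the invariant against Player II's adversarial choice of $l$, and the deliberate choice $k=X_{|s|}$ — rather than any $k$ with $s\cup\{k\}\in T(F)$ — is exactly what ensures that $(s\cup\{k\})\setminus\{\min s\}$ aligns with an initial segment of $t$, making the invariant impervious to $l$.
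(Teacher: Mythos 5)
Your proof is correct and is essentially the paper's argument run in the opposite direction: the paper refutes a hypothetical winning strategy for Player~$\II$ by exhibiting a single play that maintains the coupling $s\tri t$ down $T(F)$, while you build Player~$\I$'s winning strategy maintaining the same coupling (with a more explicit length/initial-segment invariant; note that $s\setminus\{\min s\}\segm t$ only gives $|s|\leq|t|+1$, but your phase analysis does independently establish $|s|\leq|t|$, which is what the invariant preservation actually needs). Either way the combinatorial heart --- extending $s$ along a witness for $s\tri t$ so that the terminal leaves $s,t\in F$ satisfy $s\tri t$ and badness of $f$ decides the last round --- is identical.
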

\begin{proof}
By way of contradiction suppose that for some $m_{0},n_{0}\in \omega$ with $m_{0}<n_{0}$ we have $\tilde{f}(m_{0})\leq\tilde{f}(n_{0})$ in $\PowerQ(Q)$ and let $\sigma$ be a winning strategy for Player $\II$ in $\GPowerQ \big(\tilde{f}(m_{0}),\tilde{f}(n_{0})\big)$. Let $s_{0}=(m_{0})$, $t_{0}=(n_{0})$ and $u_{0}=(m_{0},n_{0})$. We consider the following play of $\GPowerQ \big(\tilde{f}(m_{0}),\tilde{f}(n_{0})\big)$. Observe that if $s_{0}=(m_{0})\notin F$, then $u_{0}=(m_{0},n_{0})\in T(F)$. We make Player $\I$ start with $\tilde{f}(s_{1})$ where $s_{1}=s_{0}$ if $s_{0}\in F$ and $s_{1}=u_{0}$ otherwise. Then $\II$ answers according to $\sigma$ by $\tilde{f}(t_{1})$ for some $t_{1}\in T(F)$. If $t_{0}=(n_{0})\in F$, then necessarily $t_{1}=t_{0}$ and we let $u_{1}=u_{0}\conc (k)$ with $k=1+\max u_{0}$. Otherwise $t_{0}\segs t_{1}$ and $t_{1}=(n_{0},n_{1})$ for some $n_{1}>n_{0}$, we then let $u_{1}=u_{0}\cup t_{1}=u_{0}\conc (n_{1})$. Notice that in any case $s_{1}\tri t_{1}$ since for $X=u_{1}\cup \omega/u_{1}$ we have $s_{1}\segs X$ and $t_{1}\segs \shift{X}$. Then we make $\I$ respond with $\tilde{f}(s_{2})$ where $s_{2}=s_{1}$ if $s_{1}\in F$, $s_{2}=u_{1}$ if $s_{1}\notin F$. We continue in this fashion, an example of which is depicted in  \cref{fig PlayingCopyingReverse}. After finitely many rounds $\I$ has reached some $f(s)$ for $s\in F$, and $\II$ has reached some $f(t)$ with $t\in F$. By construction $s\tri t$, but since $\sigma$ is winning for $\II$, we have $f(s)\leq f(t)$, a contradiction.
\begin{figure}[!ht]
\begin{center}
\includegraphics{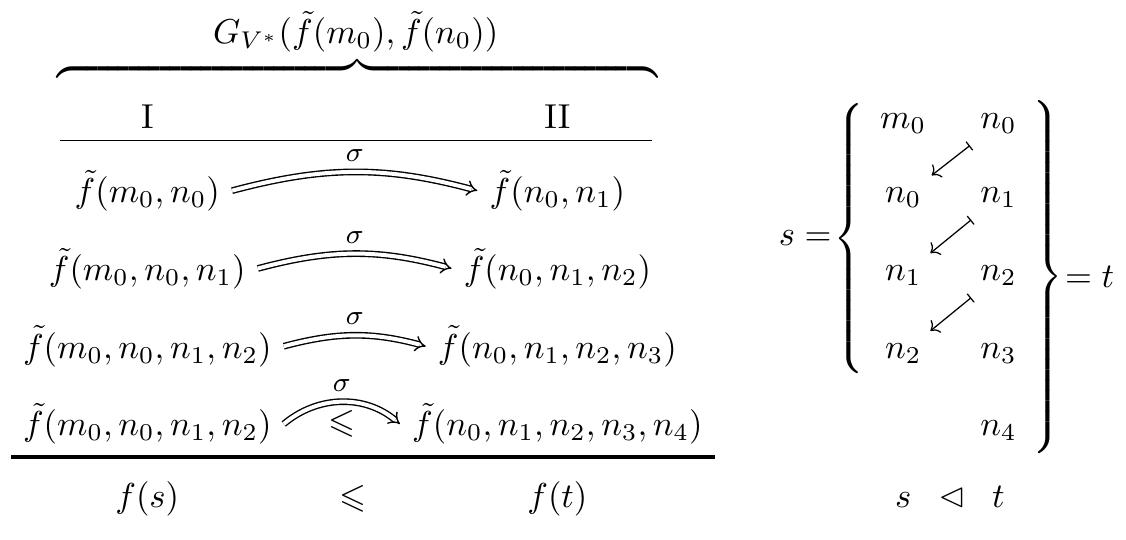}
\end{center}
\caption{Copying and shift.}
\label{fig PlayingCopyingReverse}
\end{figure}
\end{proof}

Notice that by definition $\tilde{f}:T(F)\to \PowerQ(Q)$ only reaches \emph{hereditarily countable non-empty sets over} $Q$, namely elements of $Q$ and countable non-empty sets of hereditarily countable non-empty sets over $Q$. Let $\HerCtblQ(Q)$ denote the set of hereditarily countable non-empty sets over $Q$ equipped with the qo induced from $\PowerQ(Q)$. 
We have obtained the following well known equivalence.

\begin{theorem}\label{thm BqoHerCtblWqo}
A quasi-order $Q$ is \bqo{} if and only if $\HerCtblQ(Q)$ is \wqo{}.
\end{theorem}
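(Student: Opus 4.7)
The plan is to prove the two implications separately, using \cref{cor PowerQPresBqo} in one direction and the construction of $\tilde{f}$ together with \cref{lem: ConverseGame} in the other.

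For the forward implication, I assume $Q$ is \bqo{}. By \cref{cor PowerQPresBqo}, $\PowerQ(Q)$ is then \bqo{}. Observing that \bqo{} is inherited by sub-quasi-orders (any bad locally constant multi-sequence in a sub-qo is also a bad locally constant multi-sequence in the ambient qo) and that $\HerCtblQ(Q) \subseteq \PowerQ(Q)$ is equipped with the induced order, I conclude that $\HerCtblQ(Q)$ is \bqo{}. Finally \cref{prop WellOrderWqoBqo} gives that $\HerCtblQ(Q)$ is \wqo{}, finishing this direction in three short moves.

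For the converse, I argue contrapositively. Suppose $Q$ is not \bqo{}; then by \cref{cor WlogSpare} there is a bad super-sequence $f\colon F \to Q$. I may assume $F$ is a front on $\omega$: if $F$ is a front on some $X \in \infSub{\omega}$, precomposing with the increasing bijection $\omega \to X$ produces a bad super-sequence from a front on $\omega$, since both the badness and the shift relation $\tri$ are preserved under order-preserving relabeling. I then form $\tilde{f}\colon T(F) \to \PowerQ(Q)$ as in the construction preceding \cref{lem: ConverseGame}, and appeal to that lemma to conclude that $\tilde{f}\restr{[\omega]^{1}}$ is a bad sequence in $\PowerQ(Q)$. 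The final step is to verify that $\tilde{f}$ actually takes values in $\HerCtblQ(Q)$, which then yields a bad sequence in $\HerCtblQ(Q)$ and contradicts the assumption that $\HerCtblQ(Q)$ is \wqo{}.

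The only part that is not purely an invocation of a prior result is this last verification, which I view as the main technical obstacle, though a mild one. I would prove by induction along the well-founded relation $\sges$ on $T(F)$ that $\tilde{f}(s) \in \HerCtblQ(Q)$ for every $s \in T(F)$. If $s \in F$ then $\tilde{f}(s) = f(s) \in Q \subseteq \HerCtblQ(Q)$. Otherwise $\tilde{f}(s) = \{\tilde{f}(s \cup \{n\}) \mid n \in \omega/s,\ s \cup \{n\} \in T(F)\}$; this set is countable, each of its elements lies in $\HerCtblQ(Q)$ by the induction hypothesis, and it is non-empty because an $s \in T(F) \setminus F$ cannot be a maximal element of $T(F)$ (maximal elements of $T(F)$ are exactly the members of $F$, by the definition of $T(F)$). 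Thus $\tilde{f}(s)$ is a countable non-empty set of hereditarily countable non-empty sets over $Q$, hence an element of $\HerCtblQ(Q)$, and the proof is complete.
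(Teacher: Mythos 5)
Your proposal is correct and follows essentially the same route as the paper: the forward direction is \cref{cor PowerQPresBqo} plus the facts that \bqo{} passes to sub-quasi-orders and implies \wqo{}, and the converse is the construction of $\tilde{f}$ together with \cref{lem: ConverseGame}, plus the observation (which the paper states without proof and you verify by induction on $\sges$) that $\tilde{f}$ takes values in $\HerCtblQ(Q)$. The extra details you supply -- the relabeling of the front onto $\omega$ and the non-emptiness of $\tilde{f}(s)$ for $s\in T(F)\setminus F$ -- are accurate and only make explicit what the paper leaves implicit.
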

\begin{proof}
If $Q$ is \bqo{} then $\PowerQ(Q)$ is \bqo{} by \cref{cor PowerQPresBqo} and so in particular $\HerCtblQ(Q)$ is \wqo{}. For the converse implication, assume that $Q$ is not \bqo{}. Then there is some bad super-sequence in $Q$ and \cref{lem: ConverseGame} yields a bad sequence in $\HerCtblQ(Q)$, so $\HerCtblQ(Q)$ is not \wqo{}.
\end{proof}

Notice that by definition any countable non-empty subset of $\HerCtblQ(Q)$ belongs to $\HerCtblQ(Q)$. Moreover, by \cref{prop:ClassicEquWqo2}~\cref{Item:5b} a quasi-order is \wqo{} if and only if the qo $\CtblSubs(Q)$ of its countable subsets is well-founded, so $\HerCtblQ(Q)$ is \wqo{} if and only if it is well-founded.

\begin{theorem}\label{thm BqoHerCtblWellF}
A quasi-order $Q$ is \bqo{} if and only if $\HerCtblQ(Q)$ is well-founded.
\end{theorem}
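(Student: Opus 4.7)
The forward direction is immediate: if $Q$ is \bqo{}, then by \cref{thm BqoHerCtblWqo} the quasi-order $\HerCtblQ(Q)$ is \wqo{}, and every \wqo{} is in particular well-founded. The interesting direction is the converse, and the idea is to exploit the fact that $\HerCtblQ(Q)$ is closed under taking countable non-empty subsets, reducing well-foundedness for $\HerCtblQ(Q)$ to \wqo{}-ness of $\HerCtblQ(Q)$.

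More precisely, assume $\HerCtblQ(Q)$ is well-founded. By \cref{thm BqoHerCtblWqo} it suffices to show that $\HerCtblQ(Q)$ is \wqo{}, and by \cref{prop:ClassicEquWqo2}~\cref{Item:5b} applied to $\HerCtblQ(Q)$ in place of $Q$, it suffices to show that $\CtblSubs(\HerCtblQ(Q))$ is well-founded under the domination quasi-order. The key observation is that every countable non-empty subset $A$ of $\HerCtblQ(Q)$ is itself an element of $\HerCtblQ(Q)$: indeed, by definition $\HerCtblQ(Q)$ is precisely the class of hereditarily countable non-empty sets over $Q$, hence closed under formation of countable non-empty unions of its members as sets.

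Now suppose $(A_n)_{n\in\omega}$ were an infinite descending chain in $\CtblSubs(\HerCtblQ(Q))$. As noted in the proof of \cref{prop:ClassicEquWqo2}, no $A_n$ in such a chain can be empty. Therefore each $A_n$ belongs to $\HerCtblQ(Q)$, and since no $A_n$ is an atom (no element of $Q$ is a non-empty set of elements of $\HerCtblQ(Q)$), clause~\cref{def QoPowerQ} of \cref{rem PowerDownset} (case of two non-atoms) says that $A_m \leq A_n$ holds in $\HerCtblQ(Q)$ if and only if for every $A' \in A_m$ there exists $B' \in A_n$ with $A' \leq B'$ — which is exactly the domination order on $\CtblSubs(\HerCtblQ(Q))$. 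Thus $(A_n)_{n\in\omega}$ is an infinite descending chain in $\HerCtblQ(Q)$, contradicting the assumption.

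The only subtle point — and the step I would be most careful to write out — is the verification that the quasi-order of $\CtblSubs(\HerCtblQ(Q))$ agrees with the quasi-order inherited from $\HerCtblQ(Q)\subseteq \PowerQ(Q)$ when each countable subset $A$ is identified with the corresponding element $A \in \HerCtblQ(Q)$. Once this identification is justified via clause~\cref{def QoPowerQ} of \cref{rem PowerDownset}, the argument is essentially a one-line transfer of a descending chain from one structure to the other, and there is no further obstacle.
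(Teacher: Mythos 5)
Your proof is correct and follows essentially the same route as the paper: combine \cref{thm BqoHerCtblWqo} with the observation that every countable non-empty subset of $\HerCtblQ(Q)$ is itself a member of $\HerCtblQ(Q)$ carrying the domination order, so that \cref{prop:ClassicEquWqo2}~\cref{Item:5b} collapses \wqo{}-ness of $\HerCtblQ(Q)$ to its well-foundedness; you merely spell out the transfer of a descending chain that the paper leaves implicit. The only nit is that the clause of \cref{rem PowerDownset} you need (both arguments non-atoms) is the fourth one, not the one carrying the label \cref{def QoPowerQ}.
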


\section{Around the definition of better-quasi-order}\label{sec AroundBQO}

In the previous section, we were led to the definition of \bqo{}s by reflecting a bad sequence in $\PowerQ(Q)$ into some bad multi-sequence in $Q$. In this section, we discuss the definition we obtained and try to understand what its essential features are. Along this line we show that the presence of the shift is somewhat accidental. 

\subsection{The perfect versus bad dichotomy}

For every $X\in\infSub{\omega}$, let us denote invariably by $\SHIFT:\infSub{X}\to\infSub{X}$ the \emph{shift map} defined by $\SHIFT(N)=\shift{N}$ for every $N\in\infSub{X}$.

For our discussion, we wish to treat both the pairs $(\infSub{X},\SHIFT)$, for $X\in\infSub{\omega}$, and the quasi-orders $(Q,\leq)$ as objects in the same category. 

With this in mind, let us call a \emph{topological digraph} a pair $(A,R)$ consisting of a topological space $A$ together with a binary relation $R$ on $A$. If $(A,R)$ and $(B,S)$ are topological digraphs, a \emph{continuous homomorphism} from $(A,R)$ to $(B,S)$ is a continuous map $\varphi:A\to B$ such that for every $a,a'\in A$, $a\mathrel{R}a'$ implies $\varphi(a)\mathrel{S}\varphi(a')$. As an important particular case, if $f:A\to A$ is any function we write $(A,f)$ for the topological digraph whose binary relation is the graph of the function $f$. If $f:A\to A$ and $g:B\to B$ are functions, a map $\varphi:A\to B$ is a continuous homomorphism from $(A,f)$ to $(B,g)$ exactly in case $\varphi$ is continuous and $\varphi\circ f=g\circ \varphi$. For a binary relation $R$ on $A$ let us denote by $R^{\complement}$ the binary relation $(A\times A)\setminus R$. 

Observe that for a discrete space $A$, a multi-sequence $h:\infSub{X}\to A$ is continuous exactly when it is locally constant.

\begin{proposition}\label{prop: DichtomyMultiseq}
Let $f:\infSub{\omega}\to\infSub{\omega}$ be a continuous map such that $f(X)\subseteq X$ for every $X\in\infSub{\omega}$ and $R$ be a binary relation on a discrete space $A$. For every continuous $\varphi:\infSub{\omega}\to A$ there exists $Z\in\infSub{\omega}$ such that
\begin{descThm}
\item[either] $\varphi: (\infSub{Z}, f)\to (A,R)$ is a continuous homomorphism,
\item[or] $\varphi:(\infSub{Z},f)\to (A,R^{\complement})$ is a continuous homomorphism.
\end{descThm}
\end{proposition}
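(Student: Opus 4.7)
The idea is to package $\varphi$ and its composition with $f$ into a single locally constant multi-sequence into a discrete space, so that Nash-Williams' theorem applies to the resulting super-sequence. Concretely, consider the map
\[
\psi:\infSub{\omega}\longrightarrow A\times A,\qquad \psi(X)=\bigl(\varphi(X),\,\varphi(f(X))\bigr).
\]
Since $\varphi$ and $f$ are continuous and $A\times A$ is discrete, $\psi$ is continuous, i.e. locally constant. Thus, as in \cref{sec:MultisSeqSimpson}, $\psi$ gives rise to the front $F^{\psi}$ on $\omega$ and a super-sequence $\psi^{\dcl}:F^{\psi}\to A\times A$ satisfying $\psi(Y)=\psi^{\dcl}(s)$ for every $Y\in\infSub{\omega}$ and the unique $s\in F^{\psi}$ with $s\segs Y$.

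Next I would partition $F^{\psi}$ into
\[
S=\{s\in F^{\psi}\mid \psi^{\dcl}(s)\in R\}\qquad\text{and}\qquad F^{\psi}\setminus S.
\]
By Nash-Williams' \cref{cor:NashWill}, there exists a sub-front $F'\subseteq F^{\psi}$ such that either $F'\subseteq S$ or $F'\cap S=\emptyset$. By the characterization of sub-fronts given just before \cref{fac SpareIntOperator}, we may write $F'=F^{\psi}|Z$ for some $Z\in\infSub{\omega}$. I then claim this $Z$ works.

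To see this, fix $Y\in\infSub{Z}$. The hypothesis $f(X)\subseteq X$ gives $f(Y)\subseteq Y\subseteq Z$, so $f$ restricts to a map $\infSub{Z}\to\infSub{Z}$ whose continuity is inherited from $f$; likewise $\varphi\restr{\infSub{Z}}$ is continuous. Moreover, the unique $s\in F^{\psi}$ with $s\segs Y$ satisfies $s\subseteq Y\subseteq Z$, so $s\in F^{\psi}|Z=F'$. Hence $\psi(Y)=\psi^{\dcl}(s)$ lies in $R$ in the first alternative, respectively in $R^{\complement}$ in the second. Unpacking the definition of $\psi$, this reads $\varphi(Y)\mathrel{R}\varphi(f(Y))$ for every $Y\in\infSub{Z}$ (resp.\ $\varphi(Y)\mathrel{R^{\complement}}\varphi(f(Y))$), which is exactly the statement that $\varphi:(\infSub{Z},f)\to(A,R)$ (resp.\ $(A,R^{\complement})$) is a continuous homomorphism.

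\textbf{Main obstacle.} All the real work is carried by Nash-Williams' theorem and the dictionary between locally constant multi-sequences and super-sequences; once one has the idea of passing to the pair $(\varphi,\varphi\circ f)$, the argument is essentially a formality. The one subtlety worth highlighting is the role of the hypothesis $f(X)\subseteq X$: it is used both to guarantee that $f$ restricts to an endomap of $\infSub{Z}$, and, at the same time, to ensure that for any $Y\in\infSub{Z}$ the initial segment $s\segs Y$ given by the front $F^{\psi}$ already lies inside $Z$, so that it is captured by the chosen sub-front $F^{\psi}|Z$.
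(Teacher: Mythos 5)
Your proposal is correct and follows essentially the same route as the paper: the paper directly two-colours $\infSub{\omega}$ by $c(X)=1$ iff $\varphi(X)\mathrel{R}\varphi(f(X))$ and applies Nash-Williams to the associated super-sequence, which is just a condensed version of your passage through the pair map $\psi=(\varphi,\varphi\circ f)$ followed by the partition of $F^{\psi}$ according to membership in $R$. The only (harmless) quibble is in your closing remark: the fact that the initial segment $s\segs Y$ lies in $Z$ follows merely from $s\subseteq Y\subseteq Z$, so the hypothesis $f(X)\subseteq X$ is really only needed to make $f$ an endomap of $\infSub{Z}$.
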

\begin{proof}
Let $\varphi:\infSub{\omega}\to (A,R)$ be locally constant and define $c:\infSub{\omega}\to 2$ by $c(X)=1$ if and only if $\varphi(X)\mathrel{R}\varphi(f(X))$. Clearly $c$ is locally constant so let $c^{\dcl}:F^{c}\to 2$ be the associated super-sequence. By Nash-Williams' \cref{cor:NashWill} there exists an infinite subset $Z$ of $\omega$ such that $c^{\dcl}\restr{F^{c}|Z}:F^{c}|Z\to 2$ is constant. Therefore for the restriction $\psi=\varphi\restr{\infSub{Z}}:\infSub{Z}\to A$ it follows that either $\psi:(\infSub{Z},f)\to (A,R^{\complement})$ is a continuous homomorphism, or $\psi:(\infSub{Z},f)\to (A,R)$ is a continuous homomorphism.
\end{proof}

\begin{remark}
The previous proposition generalises as follows. Let $A$ be any topological space, $R\subseteq A\times A$ be a Borel binary relation and $f:\infSub{\omega}\to\infSub{\omega}$ a Borel map such that $f(X)\subseteq X$ for every $X\in\infSub{\omega}$. For every Borel map $\varphi:\infSub{\omega}\to A$ there exists $Z\in\infSub{\omega}$ such that
\begin{descThm}
\item[either] $\varphi: (\infSub{Z}, f)\to (A,R)$ is a Borel homomorphism,
\item[or] $\varphi:(\infSub{Z},f)\to (A,R^{\complement})$ is a Borel homomorphism.
\end{descThm}
Indeed, the set
\[
\{X\in\infSub{\omega}\mid \varphi(X)\mathrel{R}\varphi(f(X))\}=\big(\varphi\times(\varphi \circ f)\big)^{-1}(R)
\]
is Borel in $\infSub{\omega}$ and thus, by the Galvin-Prikry theorem \cite{JSL:9194679}, there exists a $Z\in\infSub{\omega}$ as required.
\end{remark}

\begin{definition}
Let $R$ be a binary relation on a discrete space $A$.
\begin{enumDefn}
\item A multi-sequence $h:\infSub{X}\to A$ is \emph{perfect} if $h:(\infSub{X},\SHIFT)\to (A,R)$ is a homomorphism, i.e. if $h(N)\mathrel{R} h(\shift{N})$ for every $N\in\infSub{X}$, 
\item A super-sequence $f:F\to A$ is \emph{perfect} if for every $s,t\in F$, $s\tri t$ implies $f(s)\mathrel{R} f(t)$.
\end{enumDefn}
\end{definition}

In particular letting $f=\SHIFT$ in \cref{prop: DichtomyMultiseq}, we obtain the following well-known equivalence.
\begin{corollary}\label{lem:subarr}
For a quasi-order $Q$ the following are equivalent.
\begin{enumThm}
\item $Q$ is \bqo{}, \label{lem:subarr1}
\item every locally constant multi-sequence in $Q$  admits a sub-multi-sequence which is perfect. \label{lem:subarr2}
\item every super-sequence in $Q$ admits a perfect sub-super-sequence. \label{lem:subarr3}
\end{enumThm}
\end{corollary}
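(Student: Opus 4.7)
All three equivalences fall out of \cref{prop: DichtomyMultiseq} combined with the correspondence between locally constant multi-sequences and super-sequences studied in \cref{sec:MultisSeqSimpson}, so the proof is mostly a matter of stitching together existing statements.

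For \cref{lem:subarr1}$\Rightarrow$\cref{lem:subarr2}, I would apply \cref{prop: DichtomyMultiseq} with $f=\SHIFT$, $A=Q$ (discrete), and $R=\leq_Q$. Note $\SHIFT(N)=\shift N\subseteq N$, and $\SHIFT$ is continuous. Given any locally constant $h:\infSub{\omega}\to Q$, the dichotomy yields $Z\in\infSub{\omega}$ such that $h\restr{\infSub{Z}}$ is a continuous homomorphism either to $(Q,\leq)$ or to $(Q,\nleq)$. The second alternative says exactly that $h\restr{\infSub{Z}}$ is bad, contradicting \bqo{}ness of $Q$; so the first alternative holds, i.e.\ $h\restr{\infSub{Z}}$ is perfect.

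For \cref{lem:subarr2}$\Rightarrow$\cref{lem:subarr1}, observe that if $Z\subseteq X$ and $N\in\infSub{Z}$, then $\shift N\in\infSub{Z}$ as well, so any sub-multi-sequence of a bad locally constant multi-sequence is itself bad. Since a bad multi-sequence is trivially not perfect, \cref{lem:subarr2} rules out the existence of bad locally constant multi-sequences in $Q$, which is the definition of \bqo{}.

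For the equivalence \cref{lem:subarr2}$\Leftrightarrow$\cref{lem:subarr3}, use the correspondences $f\mapsto f^{\upcl}$ and $h\mapsto h^{\dcl}$ from \cref{sec:MultisSeqSimpson}. Exactly as in the proof of \cref{badSimpsonNW} but with the inequality reversed, the definition of $s\tri t$ matches the action of $\SHIFT$ on $\infSub{X}$: if $f:F\to Q$ is a super-sequence on a front over $X$, then for $N\in\infSub{X}$ the unique $s,t\in F$ with $s\segs N$ and $t\segs \shift N$ satisfy $s\tri t$, with $f^{\upcl}(N)=f(s)$ and $f^{\upcl}(\shift N)=f(t)$; conversely, for any $s\tri t$ in $F$ there exists $N\in\infSub{X}$ exhibiting the shift. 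Hence $f$ is perfect iff $f^{\upcl}$ is perfect, and similarly for the passage from $h$ to $h^{\dcl}$. Moreover, a sub-multi-sequence $h\restr{\infSub{Z}}$ corresponds (up to the spare normalisation $h\mapsto \check h$) to the sub-super-sequence on the sub-front $F^{h}|Z$, and conversely a sub-super-sequence $f\restr{F|Z}$ yields the sub-multi-sequence $f^{\upcl}\restr{\infSub{Z}}$. Plugging these translations into \cref{lem:subarr2} gives \cref{lem:subarr3} and vice versa.

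The only delicate point in the plan is the bookkeeping for \cref{lem:subarr2}$\Leftrightarrow$\cref{lem:subarr3}: one must check that in the passage from multi-sequences to super-sequences the $\tri$ relation on finite sets exactly captures the restriction of $\SHIFT$ to $\infSub{Z}$. This is however already implicit in \cref{badSimpsonNW} and in the definition of $\tri$, so it is really a matter of rereading those arguments with $\leq$ in place of $\nleq$.
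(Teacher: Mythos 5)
Your proposal is correct and follows essentially the same route as the paper: the paper also derives the result by applying \cref{prop: DichtomyMultiseq} with $f=\SHIFT$ and then transferring perfection between $f$ and $f^{\upcl}$ via the correspondence of \cref{sec:MultisSeqSimpson}. You merely spell out the converse implications and the $\tri$/$\SHIFT$ bookkeeping that the paper leaves implicit, and those details check out.
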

\begin{proof}
Let us show that \cref{lem:subarr1} implies \cref{lem:subarr3}. Suppose that $f:F\to Q$ is a super-sequence in a \bqo{} $Q$ where $F$ is a front on $X$. Let $f^{\upcl}:\infSub{X}\to Q$ be the corresponding multi-sequence as defined in \cref{sec:MultisSeqSimpson}. By applying \cref{prop: DichtomyMultiseq} when $f=\SHIFT$ we find $Z\in\infSub{X}$ such that the restriction of $f^{\upcl}$ to $\infSub{Z}$ is perfect. It follows that the restriction of $f$ to $F|Z$ is perfect too.
\end{proof}

\Cref{prop: DichtomyMultiseq} also suggests the following generalisation of the notion of \bqo{} to arbitrary relations:

\begin{definition}
A binary relation $R$ on a discrete space $A$ is a \emph{better-relation} on $A$ if there is no continuous homomorphism $\varphi:(\infSub{\omega},\SHIFT)\to (A,R^{\complement})$.
\end{definition}

This definition first appeared in a paper by \textcite{shelah1982better} and plays an important role in a work by \textcite{marcone1994foundations}. Notice that a better-relation is necessarily reflexive and that a better-quasi-order is simply a transitive better-relation. 

\begin{remark}
One could also consider non discrete analogues of the notion of better-quasi-orders and better-relations. \Textcite{louveau1990quasi} define a \emph{topological better-quasi-order} as a pair $(A,\leq)$, where $A$ is a topological space and $\leq$ is a quasi-order on $A$, such that there is no Borel homomorphism $\varphi:(\infSub{\omega},\SHIFT)\to (A,\leq^{\complement})$. We believe that topological analogs of \bqo{} and better-relations deserve further investigations.
\end{remark}

\subsection{Generalised shifts}
The topological digraph $(\infSub{\omega},\SHIFT)$ is central to the definition of \bqo{}. Indeed a qo $Q$ is \bqo{} if and only if there is no continuous morphism $h:(\infSub{\omega},\SHIFT)\to (Q,\leq^{\complement})$. In general, one can ask for the following:

\begin{problem}\label{prob BqoDef}
Characterise the topological digraphs which can be substituted for $(\infSub{\omega},\SHIFT)$ in the definition of \bqo{}.
\end{problem}

Let us write $(A,R)\contMor (B,S)$ if there exists a continuous homomorphism from $(A,R)$ to $(B,S)$ and $(A,R)\contMorEq(B,S)$ if both $(A,R)\contMor (B,S)$ and $(B,S)\contMor (A,R)$ hold.

Notice that a binary relation $S$ on a discrete space $B$ is a better-relation if and only if $(\infSub{\omega},\SHIFT)\NcontMor (B,S^{\complement})$.
Therefore any topological digraph $(A,R)$ with $(A,R)\contMorEq (\infSub{\omega},\SHIFT)$ can be used in the definition of better-relation in place of $(\infSub{\omega},\SHIFT)$. We do not know whether the converse holds, namely if $(A,R)$ is a topological digraph which can be substituted to $(\infSub{\omega},\SHIFT)$ in the definition of \bqo{}, does it follow that $(A,R)\contMorEq (\infSub{\omega},\SHIFT)$?

We now show that at least the shift map $\SHIFT$ can be replaced by certain \enquote{generalised shifts}. To this end, we first observe that the topological space $\infSub{\omega}$ admits a natural structure of monoid. Following \textcite{solecki2013abstract,promel1986hereditary}, we use the language of increasing injections rather than that of sets.
We denote by $\IIf$ the monoid of embeddings of $(\omega,<)$ into itself under composition,
\[
\IIf=\{f:\omega\to\omega\mid \text{$f$ is injective and increasing}\}.
\]
For every $X\in\infSub{\omega}$, we let $f_{X}\in\IIf$ denote the unique increasing and injective enumeration of $X$. Conversely we associate to each $f\in \IIf$ the infinite subset of $\omega$ given by the range $\{f(n)\mid n\in\omega\}$ of $f$. Therefore the set of substructures of $(\omega,<)$ which are isomorphic to the whole structure $(\omega,<)$, namely $\infSub{\omega}$, is in one-to-one correspondence with the monoid of embeddings of $(\omega,<)$ into itself. Moreover observe that for all $X,Y\in\infSub{\omega}$ we have
\[
X\subseteq Y \SSI \exists g\in \IIf \quad f_{X}=f_{Y}\circ g,
\]
so the inclusion relation on $\infSub{\omega}$ is naturally expressed in terms of the monoid operation. Also, the set $\infSub{X}$ corresponds naturally to the following right ideal:
 \[f_{X}\circ \IIf=\{f_{X}\circ g\mid g\in \IIf\}.\]

As for $\infSub{\omega}$, $\IIf$ is equipped with the topology induced by the Baire space $\bai$ of all functions from $\omega$ to $\omega$. In particular, the composition $\circ:\IIf\times \IIf\to \IIf$, $(f,g)\mapsto f\circ g$ is continuous for this topology. 

Observe now that, in the terminology of increasing injections, the shift map $\SHIFT:\IIf\to \IIf$ is simply the composition on the right with the \emph{successor function} $\Su\in\IIf$, $\Su(n)=n+1$. Indeed for every $X$
\[
f_{\shift{X}}=f_{X}\circ \Su.
\]

This suggests to consider arbitrary injective increasing function $g$, $g\neq \id_{\omega}$, in place of the successor function. For any $g\in \IIf$, we write $\rtrans{g}:\IIf \to \IIf$, $f\mapsto f\circ g$ for the composition on the right by $g$. In particular, $\rtrans{\Su}=\SHIFT$ is the usual shift and in our new terminology we have $(\infSub{\omega},\SHIFT)=(\IIf, \rtrans{\Su})$.

The main result of this section is that these generalised shifts $\rtrans{g}$ are all equivalent as far as the theory of better-relations is concerned. 

\begin{theorem}\label{MainProp}
For every increasing injective function $g\in \IIf$, with $g\neq \id_{\omega}$, we have $(\IIf, \rtrans{g})\contMorEq (\infSub{\omega},\SHIFT)$.
\end{theorem}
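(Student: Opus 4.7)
Recalling that $(\infSub{\omega}, \SHIFT)$ coincides with $(\IIf, \rtrans{\Su})$, the theorem asks for continuous homomorphisms in both directions between $(\IIf, \rtrans{g})$ and $(\IIf, \rtrans{\Su})$.

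For the direction $(\IIf, \rtrans{g}) \contMor (\IIf, \rtrans{\Su})$ I would give an explicit formula. Since $g \in \IIf$ satisfies $g(n) \geq n$ everywhere and $g \neq \id_\omega$, the fixed-point set $F = \{n : g(n) = n\}$ is a finite initial segment of $\omega$. Picking any $a_0 \notin F$ and defining $h \in \IIf$ by $h(n) = g^n(a_0)$, the choice of $a_0$ forces $h(n+1) = g(h(n)) > h(n)$, so $h$ is strictly increasing, and by construction $g \circ h = h \circ \Su$. Setting $\psi(f) = f \circ h$ yields a continuous map (right composition by a fixed function is continuous) satisfying
\[
\psi(f \circ g) \;=\; f \circ g \circ h \;=\; f \circ h \circ \Su \;=\; \psi(f) \circ \Su.
\]

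For the reverse direction $(\IIf, \rtrans{\Su}) \contMor (\IIf, \rtrans{g})$ I need a continuous $\varphi: \IIf \to \IIf$ with $\varphi(f \circ \Su) = \varphi(f) \circ g$. The strategy is to exploit the orbit decomposition $\omega = F \sqcup \bigsqcup_{o \in \Omega} O_o$, with $\Omega = \omega \setminus g(\omega)$ and $O_o = \{g^k(o) : k \geq 0\}$. On each orbit $g$ acts as the successor, so the defining relation pins $\varphi$ down entirely from its values at the roots: for $o \in \Omega$, set $\varphi(f)(g^k(o)) = \gamma_o(f \circ \Su^k)$, where $\gamma_o(f) = \varphi(f)(o)$ is any continuous map $\IIf \to \omega$; for $o \in F$, the relation degenerates to $\gamma_o(f) = \gamma_o(f \circ \Su)$, which by a standard tail-invariance argument forces the locally constant $\gamma_o$ to be constant.

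The hard part will be to choose the $\gamma_o$'s so that the resulting $\varphi(f)$ is a strictly increasing injection for every $f \in \IIf$. For $g = \Su^s$ with $|\Omega| = s$ finite, the block-encoding $\gamma_{o_i}(f) = s \cdot f(0) + i$, indexing $\Omega = \{o_0 < \cdots < o_{s-1}\}$, works directly and produces $\varphi(f)(n) = s f(\lfloor n/s \rfloor) + (n \bmod s)$. For general $g$ with $\Omega$ possibly infinite and the orbits interleaved in complicated ways in the natural order of $\omega$ (e.g.\ $g(n) = 2n$), the $\gamma_o$'s must be arranged so that the value at $n$ depends on $f$ at an index $M(n)$ given by a carefully chosen non-decreasing function, with compatible bounded offsets separating the different orbits that contribute at the same level of $M$. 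Solving this combinatorial problem is the principal obstacle; once it is carried out, continuity of $\varphi$ is automatic from the local continuity of each $\gamma_o$.
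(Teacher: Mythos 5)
Your first direction is correct and is essentially the paper's argument: the map $\psi(f)=f\circ h$ with $h(n)=g^{n}(a_{0})$ for a non-fixed point $a_{0}$ is exactly \cref{RmapLemma} (there $a_{0}=k_{g}=\min\{k\mid k<g(k)\}$ and $h=G$). The intertwining identity $g\circ h=h\circ\Su$ and the continuity of right composition are verified correctly.

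The second direction, however, contains a genuine gap. You correctly reduce the problem to choosing, for each root $o\in\Omega=\omega\setminus g(\omega)$, a locally constant $\gamma_{o}:\IIf\to\omega$ so that $\varphi(f)(g^{k}(o))=\gamma_{o}(f\circ\Su^{k})$ defines an increasing injection for every $f$, and you solve this for $g=\Su^{s}$; but for general $g$ you explicitly defer the construction (\enquote{Solving this combinatorial problem is the principal obstacle}). That combinatorial problem \emph{is} the content of the lemma, and it is not obvious that your additive block-encoding generalises: when $\Omega$ is infinite and the orbits interleave (your own example $g(n)=2n$ has $\Omega$ infinite), one must make the images of distinct orbits avoid each other globally while each grows at a rate dictated by $f$, and an ansatz of the form \enquote{value at $n$ depends on $f$ at $M(n)$ plus a bounded offset} does not work as stated, since the gaps between consecutive roots are unbounded. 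The paper's solution (\cref{EmapLemma}) resolves this by iterating $g$ itself rather than adding offsets: with $G(n)=g^{n}(k_{g})$ one sets $\sigma(f)(l)=g^{f(n)-n}(l)$ for $G(n)\leq l<G(n+1)$ (and $\sigma(f)(l)=l$ on the fixed points $l<G(0)$). Because $g$ maps the block $[G(n),G(n+1))$ bijectively onto $[G(n+1),G(n+2))$ and is increasing, injectivity and monotonicity across block boundaries follow from $f(n+1)\geq f(n)+1$, and the relation $\sigma(f\circ\Su)=\sigma(f)\circ g$ is a two-line computation. In your language, this amounts to taking $\gamma_{o}(f)=g^{f(n)-n}(o)$ where $n$ is determined by $G(n)\leq o<G(n+1)$; until you supply such an explicit choice and verify injectivity and monotonicity for arbitrary $g$, the proof of $(\IIf,\rtrans{\Su})\contMor(\IIf,\rtrans{g})$, and hence of the theorem, is incomplete.
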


\Cref{MainProp} follows from \cref{RmapLemma,EmapLemma} below, but let us first state explicitly some of the direct consequences.

\begin{remark}
Every topological digraph $(A,R)$ has an associated topological graph $(A,R^{\text{s}})$ whose symmetric and irreflexive relation $R^{\text{s}}$ is given by
\[
a\mathrel{R^{\text{s}}}b\SSI a\neq b\text{ and } (a\mathrel{R}b \ \text{or}\ b\mathrel{R}a ).
\]
The Borel chromatic number of topological graphs was first studied by \textcite{Kechris19991}. 
Notably the associated graph of $(\infSub{\omega},\SHIFT)$ has chromatic number $2$ and Borel chromatic number $\aleph_{0}$ (see also the paper by \textcite{di2006canonical}). It directly follows from \cref{MainProp} that for every $g\in \IIf$, with $g\neq\id_{\omega}$, the associated graph of $(\IIf, \rtrans{g})$ also has chromatic number $2$ and Borel chromatic number $\aleph_{0}$.
\end{remark}

\begin{definition}
Let $g\in\IIf$, $R$ a binary relation on a discrete space $A$. We say $(A,R)$ is a \emph{$g$-better-relation} if one of the following equivalent conditions hold:
\begin{enumDefn}
\item for every continuous $\varphi:\IIf\to A$ there exists $f\in \IIf$ such that the restriction $\varphi_{f}:(f\circ \IIf, \rtrans{g})\to (A,R)$ is a continuous morphism,
\item there is no continuous morphism $\varphi:(\IIf, \rtrans{g})\to (A,R^{\complement})$.
\end{enumDefn}
In case $\leq$ is a quasi-order on a discrete space $Q$, we say that $Q$ is \emph{$g$-\bqo{}} instead of $(Q,\leq)$ is a $g$-better-relation.
\end{definition}

 Of course this notion trivialises for $g=\id_{\omega}$, since an $\id_{\omega}$-better-relation is simply a reflexive relation. Moreover better relation corresponds to $\Su$-better-relation.

\begin{theorem}\label{g-BQO}
Let $g\in\IIf\setminus\{\id_{\omega}\}$, $R$ a binary relation on a discrete space $A$. Then $R$ is a $g$-better-relation if and only if $R$ is a better-relation. In particular, a quasi-order $(Q,\leq)$ is $g$-\bqo{} if and only if $(Q,\leq)$ is \bqo{}. 
\end{theorem}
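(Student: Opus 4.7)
The plan is to derive \cref{g-BQO} as an immediate consequence of \cref{MainProp}, which asserts that $(\IIf,\rtrans{g})\contMorEq(\infSub{\omega},\SHIFT)$ for every $g\in\IIf\setminus\{\id_{\omega}\}$. First I would unwind the definitions to reformulate the statement in the negative: $R$ being a $g$-better-relation means exactly that $(\IIf,\rtrans{g})\NcontMor(A,R^{\complement})$, and the original notion of better-relation, corresponding to $g=\Su$, means $(\IIf,\rtrans{\Su})=(\infSub{\omega},\SHIFT)\NcontMor(A,R^{\complement})$. Thus the theorem reduces to showing the equivalence
\[
(\IIf,\rtrans{g})\NcontMor(A,R^{\complement})\quad\Longleftrightarrow\quad(\IIf,\rtrans{\Su})\NcontMor(A,R^{\complement}).
\]

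Next I would invoke the straightforward fact that the class of topological digraphs admitting no continuous homomorphism into a fixed target is preserved under $\contMor$, since composition of continuous homomorphisms is a continuous homomorphism. Concretely, if $\psi:(\IIf,\rtrans{\Su})\to(\IIf,\rtrans{g})$ is a continuous homomorphism provided by \cref{MainProp} and $\varphi:(\IIf,\rtrans{g})\to(A,R^{\complement})$ were a continuous homomorphism, then $\varphi\circ\psi:(\IIf,\rtrans{\Su})\to(A,R^{\complement})$ would be one too, showing that any better-relation is automatically a $g$-better-relation. The reverse implication is symmetric, using the continuous homomorphism $(\IIf,\rtrans{g})\to(\IIf,\rtrans{\Su})$ supplied by the other half of $\contMorEq$ in \cref{MainProp}. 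The final sentence about quasi-orders is then just the instance where $R$ is the reflexive-transitive relation $\leq$.

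The only genuine work therefore lies in \cref{MainProp}, and specifically in constructing the two continuous homomorphisms witnessing $(\IIf,\rtrans{g})\contMorEq(\IIf,\rtrans{\Su})$. That construction is carried out in the forthcoming lemmas \cref{RmapLemma,EmapLemma}, and will be the main obstacle: one must exhibit, for every non-trivial $g\in\IIf$, an explicit continuous map $\IIf\to\IIf$ intertwining composition on the right by $\Su$ with composition on the right by $g$, and vice-versa. Once those lemmas are available, the argument for \cref{g-BQO} is a pure formality consisting of the composition argument described above.
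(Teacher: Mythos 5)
Your proposal is correct and follows essentially the same route as the paper, which likewise obtains \cref{g-BQO} as a direct consequence of \cref{MainProp} (itself established via \cref{RmapLemma,EmapLemma}) by composing continuous homomorphisms. The unwinding of the definitions and the composition argument are exactly the implicit content of the paper's deduction, so nothing is missing.
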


\begin{corollary}
A qo $Q$ is \bqo{} if and only if for every locally constant $\varphi:\IIf\to Q$ and every $g\in \IIf$ there exists $f\in \IIf$ such that
\[
\varphi(f)\leq \varphi(f\circ g).
\]
\end{corollary}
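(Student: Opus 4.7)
The plan is to derive both implications directly from Theorem \ref{g-BQO}, via the unfolding of the definition of $g$-\bqo{}. The backward implication is in fact essentially immediate: applying the stated hypothesis with $g=\Su$, one gets that for every locally constant $\varphi:\IIf\to Q$ there exists $f\in\IIf$ with $\varphi(f)\leq\varphi(f\circ\Su)$. Transporting this statement along the identification $\IIf\cong\infSub{\omega}$, $f\mapsto \{f(n)\mid n\in\omega\}$ discussed just before Theorem \ref{MainProp} -- under which right composition by $\Su$ becomes the shift map $\SHIFT$ on $\infSub{\omega}$ -- it reads: no locally constant multi-sequence $h:\infSub{\omega}\to Q$ satisfies $h(N)\nleq h(\shift{N})$ for all $N$. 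That is precisely the statement that $Q$ admits no bad locally constant multi-sequence, i.e.\ $Q$ is \bqo{}.

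For the forward implication, I would fix $g\in\IIf$ and a locally constant $\varphi:\IIf\to Q$, and split on whether $g=\id_{\omega}$. If $g=\id_{\omega}$, any $f$ works by reflexivity of $\leq$. Otherwise $g\neq\id_{\omega}$, so Theorem \ref{g-BQO} applies and gives that $Q$ is $g$-\bqo{}. Unfolding clause (1) of the definition of $g$-better-relation applied to $R=\,\leq\,$ on the discrete space $Q$, we obtain an $f_{0}\in\IIf$ for which the restriction $\varphi_{f_{0}}:(f_{0}\circ\IIf,\rtrans{g})\to (Q,\leq)$ is a continuous morphism. Since $f_{0}=f_{0}\circ\id_{\omega}$ lies in the right ideal $f_{0}\circ\IIf$, the homomorphism condition evaluated at $f_{0}$ yields $\varphi(f_{0})\leq\varphi(\rtrans{g}(f_{0}))=\varphi(f_{0}\circ g)$, and one takes $f:=f_{0}$.

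There is no real obstacle here; the substantive content lies entirely in Theorem \ref{g-BQO}. The point of the corollary is rather a conceptual one worth underlining: although the definition of $g$-\bqo{} demands that an \emph{entire right ideal} $f_{0}\circ\IIf$ witness the homomorphism property, the apparently much weaker \enquote{pointwise} requirement -- that for each $g$ and each $\varphi$ there be merely \emph{one} $f$ with $\varphi(f)\leq\varphi(f\circ g)$ -- is already equivalent to \bqo{}, uniformly over all generalised shifts $g\in\IIf$.
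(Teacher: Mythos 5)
Your argument is correct and is exactly the intended one: the paper states this as an immediate consequence of \cref{g-BQO}, obtained by unfolding the definition of $g$-\bqo{} (handling $g=\id_{\omega}$ by reflexivity) and, for the converse, specialising to $g=\Su$ under the identification $(\IIf,\rtrans{\Su})\cong(\infSub{\omega},\SHIFT)$. No gaps.
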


As a corollary we have the following strengthening of \cref{lem:subarr} which is obtained by repeated applications of \cref{prop: DichtomyMultiseq}.

\begin{proposition}\label{prop GenPerfect}
Let $Q$ be \bqo{} and $\varphi:\IIf\to Q$ be locally constant. For every finite subset $\mathcal{G}$ of $\IIf$ there exists $h\in \IIf$ such that the restriction $\varphi:h\circ \IIf \to Q$ is perfect with respect to every member of $\mathcal{G}$, i.e. for every $f\in \IIf$ and every $g\in \mathcal{G}$
\[
\varphi(h\circ f)\leq \varphi(h\circ f\circ g).
\]
\end{proposition}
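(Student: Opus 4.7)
My plan is induction on the cardinality of $\mathcal{G}$, using \cref{g-BQO} as the single-step engine. The base case $\mathcal{G} = \emptyset$ is immediate by taking $h = \id_{\omega}$. Any occurrence of $g = \id_{\omega}$ in $\mathcal{G}$ imposes no non-trivial constraint by reflexivity, so I may assume every element of $\mathcal{G}$ differs from $\id_{\omega}$.

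For the inductive step, I split off one element: write $\mathcal{G} = \mathcal{G}' \cup \{g\}$ with $|\mathcal{G}'| < |\mathcal{G}|$. Applying the induction hypothesis to $\mathcal{G}'$ yields some $h_0 \in \IIf$ such that $\varphi(h_0 \circ f) \leq \varphi(h_0 \circ f \circ g')$ for all $f \in \IIf$ and all $g' \in \mathcal{G}'$. I then introduce the auxiliary map $\psi : \IIf \to Q$ defined by $\psi(f) = \varphi(h_0 \circ f)$. Because right composition by $h_0$ is continuous and $\varphi$ is locally constant, $\psi$ is locally constant. Since $Q$ is \bqo{} and $g \neq \id_{\omega}$, \cref{g-BQO} tells us that $Q$ is $g$-\bqo{}, so there exists $h_1 \in \IIf$ with $\psi(h_1 \circ f) \leq \psi(h_1 \circ f \circ g)$ for every $f \in \IIf$.

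Setting $h = h_0 \circ h_1$, the freshly acquired inequality unwinds to the desired condition for $g$. For each $g' \in \mathcal{G}'$ and each $f \in \IIf$, the key observation is that $h_1 \circ f$ still belongs to $\IIf$, so the induction hypothesis (read with $h_1 \circ f$ in place of $f$) delivers $\varphi(h_0 \circ h_1 \circ f) \leq \varphi(h_0 \circ h_1 \circ f \circ g')$, which is exactly what we need. The only non-trivial ingredient is the single-$g$ case, provided by \cref{g-BQO}; the combinatorial engine is the closure of $\IIf$ under composition, which ensures that perfection for one shift, once achieved on a right ideal $h_0 \circ \IIf$, automatically persists on any sub-right-ideal $h_0 \circ h_1 \circ \IIf$. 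I expect no real obstacle beyond correctly bookkeeping this nesting.
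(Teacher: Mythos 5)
Your proof is correct and takes essentially the route the paper intends: the paper obtains this proposition by iterating \cref{prop: DichtomyMultiseq}, and since ruling out the anti-homomorphism branch for $\rtrans{g}$ at each stage is exactly the content of \cref{g-BQO}, your induction on $|\mathcal{G}|$ with \cref{g-BQO} as the single-step engine, together with the observation that perfection achieved on $h_{0}\circ\IIf$ persists on the sub-ideal $h_{0}\circ h_{1}\circ\IIf$, is the same argument. Only a cosmetic slip: the map $f\mapsto h_{0}\circ f$ is composition on the \emph{left} by $h_{0}$, not the right, but it is continuous either way, so $\psi$ is indeed locally constant.
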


Getting a result of this kind was one of our motivations for proving \cref{MainProp}. 

Finally here are the two lemmas which yield the proof of \cref{MainProp}.

\begin{lemma}\label{RmapLemma}
Let $g\in\IIf\setminus\{ \id_{\omega}\}$. Then $(\IIf,\rtrans{g})\contMor (\IIf,\rtrans{\Su})$, i.e. there exists a continuous map $\rho:\IIf \to \IIf$ such that for every $f\in \IIf$
\[
\rho(f\circ g)=\rho(f)\circ \Su.
\]
\end{lemma}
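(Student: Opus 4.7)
The plan is to build $\rho(f)$ so that its $n$\ts{th} value depends on $f$ only through $f\circ g^{n}$, since the required equation $\rho(f\circ g)=\rho(f)\circ \Su$ forces $\rho(f)(n+1)=\rho(f\circ g)(n)$, hence by induction $\rho(f)(n)=\rho(f\circ g^{n})(0)$. So the whole task reduces to choosing a continuous $\alpha:\IIf\to\omega$ with $\alpha(f)<\alpha(f\circ g)$ for every $f\in\IIf$, and then setting $\rho(f)(n)=\alpha(f\circ g^{n})$.

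The main point is the choice of $\alpha$. Since $g\neq\id_{\omega}$ and $g$ is strictly increasing on $\omega$ (so $g(j)\geq j$ always holds), there exists a least $k\in\omega$ with $g(k)>k$. I first check, by induction on $j\geq k$, that $g(j)>j$: if $g(j)>j$ then $g(j+1)>g(j)\geq j+1$, hence $g(j+1)>j+1$. In particular $g^{n}(k)\geq k$ for all $n$, and $g(g^{n}(k))>g^{n}(k)$, so the sequence $(g^{n}(k))_{n\in\omega}$ is strictly increasing in $\omega$. Taking $\alpha(f)=f(k)$ then gives $\alpha(f\circ g)=f(g(k))>f(k)=\alpha(f)$ for every $f\in\IIf$, because $f$ is strictly increasing and $g(k)>k$.

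With this in hand I define
\[
\rho(f)(n)=f\bigl(g^{n}(k)\bigr)\qquad\text{for every $f\in\IIf$ and $n\in\omega$.}
\]
It remains to verify three things. (i) \emph{Membership in $\IIf$:} since the sequence $(g^{n}(k))_{n\in\omega}$ is strictly increasing and $f$ is strictly increasing, $\rho(f)$ is strictly increasing, hence injective, hence belongs to $\IIf$. (ii) \emph{Continuity:} the value $\rho(f)(n)$ depends only on $f(g^{n}(k))$, a single coordinate of $f$, so $\rho:\IIf\to\IIf$ is continuous for the product topology. (iii) \emph{Intertwining equation:} for every $f\in\IIf$ and $n\in\omega$,
\[
\rho(f\circ g)(n)=(f\circ g)\bigl(g^{n}(k)\bigr)=f\bigl(g^{n+1}(k)\bigr)=\rho(f)(n+1)=\bigl(\rho(f)\circ\Su\bigr)(n),
\]
which is the required identity $\rho(f\circ g)=\rho(f)\circ\Su$.

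The only genuine subtlety is identifying the integer $k$ and establishing $g(j)>j$ for all $j\geq k$; everything else is a direct verification. No determinacy or Ramsey-theoretic machinery is needed: the construction is completely explicit, and the existence of $k$ is the unique place where the hypothesis $g\neq\id_{\omega}$ is used.
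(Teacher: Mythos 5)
Your construction is exactly the paper's: you take $k$ to be the least integer moved by $g$, set $G(n)=g^{n}(k)$, and define $\rho(f)=f\circ G$, which is precisely the map the paper uses (the paper writes $k_{g}$ for your $k$ and simply asserts $G\in\IIf$ where you verify it). The proof is correct and essentially identical in approach, with your version only adding the routine checks of monotonicity and continuity.
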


\begin{proof}
Since $g\neq \id_{\omega}$, there exists $k_{g}=\min\{k\in\omega\mid k<g(k)\}$. 
Define $G:\omega\to \omega$ by $G(n)=g^{n}(k_{g})$, where $g^{0}=\id_{\omega}$ and $g^{n+1}=g\circ g^{n}$. Clearly $G\in \IIf$. We let $\rho(f)=f\circ G$ for every $f\in \IIf$. The map $\rho:\IIf\to \IIf$ is continuous and for every $f\in \IIf$ and every $n$ we have \[
\rho(f\circ g)(n)=f\circ g\circ g^{n}(k_{g})=f\circ G(n+1)=(\rho(f)\circ \Su)(n).\qedhere
\] 
\end{proof}

\begin{lemma}\label{EmapLemma}
Let $g\in\IIf\setminus\{\id_{\omega}\}$. Then $(\IIf,\rtrans{\Su})\contMor(\IIf,\rtrans{g})$, i.e. there exists a continuous map $\sigma : \IIf \to \IIf$ such that for every $f\in \IIf$
\[
\sigma (f\circ \Su)=\sigma (f)\circ g.
\]
\end{lemma}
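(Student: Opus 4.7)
The plan is to exploit the orbit structure of $g$. Let $k_{g}=\min\{k\in\omega : k < g(k)\}$, let $F=\{0,\ldots,k_{g}-1\}$ be the set of fixed points of $g$, and set $B=\omega\setminus g(\omega)$. Every $m\in\omega$ admits a unique decomposition $m=g^{k(m)}(b(m))$ with $b(m)\in F\cup B$ (taking $k(m)=0$ when $b(m)\in F$).

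Iterating the target equation $\sigma(f\circ\Su)=\sigma(f)\circ g$ yields
\[
\sigma(f)(g^{k}(b))=\sigma(f\circ\Su^{k})(b)
\]
for every $b\in F\cup B$ and every $k\geq 0$, so $\sigma$ is entirely determined by its values on orbit representatives; conversely, any choice of continuous maps $\phi_{b}:=\sigma(\cdot)(b):\IIf\to\omega$ for $b\in F\cup B$ defines, by the same formula, a continuous $\rtrans{g}$-equivariant candidate $\sigma$. For $b\in F$, equivariance forces $\sigma(f)(b)=\sigma(f\circ\Su)(b)$, so $\phi_{b}$ is $\rtrans{\Su}$-invariant. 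Combined with local constancy, this forces $\phi_{b}$ to be constant: given $f_{0},f_{1}\in\IIf$, continuity yields a basic neighborhood $U_{0}=\{h : h\restr{N_{0}}=f_{0}\restr{N_{0}}\}$ on which $\phi_{b}$ is constant; choosing $j$ large enough that $f_{1}(j)>f_{0}(N_{0}-1)$ and setting $h(i):=f_{0}(i)$ for $i<N_{0}$ and $h(N_{0}+i):=f_{1}(j+i)$ for $i\geq 0$ produces $h\in U_{0}\cap\IIf$ with $h\circ\Su^{N_{0}}=f_{1}\circ\Su^{j}$, whence $\phi_{b}(f_{0})=\phi_{b}(h)=\phi_{b}(f_{1}\circ\Su^{j})=\phi_{b}(f_{1})$. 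One then sets $\sigma(f)(b):=b$ for each $b\in F$.

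The remaining task is to choose the $\phi_{b}$ for $b\in B$ so that $\sigma(f)\in\IIf$, i.e., strictly increasing, for every $f$. The hardest case is consecutive $m<m+1$ with $k(m)>k(m+1)$ (which can occur e.g.\ for $g(n)=2n$ at $m=2^{k}$, $m+1=2^{k}+1$): since $f(k(m))-f(k(m+1))$ can be arbitrarily large, naive choices such as $\phi_{b}(f)=f(0)$ fail. The idea is to let $\phi_{b}(f)$ depend on $f$ at a well-chosen index $\tau(b)$ such that $\alpha(m):=k(m)+\tau(b(m))$ is weakly monotone in $m$; one then sets $\phi_{b}(f):=C\cdot f(\tau(b))+\varepsilon(b)$ for a large constant $C$ and a bounded offset $\varepsilon(b)$ chosen to break ties within level sets of $\alpha$ in the correct order. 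The main obstacle is constructing $\tau$ and $\varepsilon$ compatible with how the $g$-orbits interleave in the natural order of $\omega$; this combinatorial step, which may require letting $\phi_{b}$ depend on several values of $f$ rather than just one, is the technical heart of the argument.
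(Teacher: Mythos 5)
Your setup is sound: the reduction of an $\rtrans{g}$-equivariant $\sigma$ to its values $\phi_{b}(f)=\sigma(f)(b)$ on orbit representatives $b\in F\cup B$ is correct, as is the observation that $\phi_{b}$ must be constant for fixed points $b\in F$ (and setting $\sigma(f)(b)=b$ there is the right choice). But the proof has a genuine gap exactly where you say it does: the construction of the $\phi_{b}$ for $b\in B$ guaranteeing that $\sigma(f)$ is increasing is never carried out, and this is not a routine verification that can be waved at --- it is the entire content of the lemma. The affine ansatz $\phi_{b}(f)=C\cdot f(\tau(b))+\varepsilon(b)$ you sketch is moreover unlikely to close the gap as stated: the value of $\sigma(f)$ at $g^{k}(b)$ would be $C\cdot f(\tau(b)+k)+\varepsilon(b)$, and for interleaved orbits one must compare $f$ at different arguments with only a bounded offset $\varepsilon$ to absorb the discrepancy, whereas the gaps $f(j+1)-f(j)$ are unbounded; a fixed constant $C$ and bounded $\varepsilon$ cannot simultaneously respect all adjacencies.

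The paper's proof supplies precisely the missing combinatorial idea, and it avoids the orbit-by-orbit bookkeeping altogether. Set $k_{g}=\min\{k\mid k<g(k)\}$ and $G(n)=g^{n}(k_{g})$, and define $\sigma(f)(l)=l$ for $l<G(0)$ and $\sigma(f)(l)=g^{f(n)-n}(l)$ for $G(n)\leq l<G(n+1)$. In other words: rather than choosing a separate formula for each $g$-orbit, one partitions $\omega\setminus F$ into the consecutive intervals $[G(n),G(n+1))$ cut out by the single orbit of $k_{g}$, and applies one power of $g$ uniformly to each whole interval. Monotonicity \emph{within} a block is then automatic (each $g^{j}$ is increasing), and monotonicity \emph{across} consecutive blocks reduces to the single boundary estimate $g^{f(n)-n}(l)<g^{f(n+1)-(n+1)}(G(n+1))$ for $l<G(n+1)$, which follows from $f(n)+1\leq f(n+1)$. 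The equivariance $\sigma(f\circ\Su)=\sigma(f)\circ g$ then comes from the fact that $g$ maps the block $[G(n),G(n+1))$ into $[G(n+1),G(n+2))$. In your language, this amounts to taking $\tau(b)$ to be the index $n$ with $G(n)\leq b<G(n+1)$ and $\phi_{b}(f)=g^{f(\tau(b))-\tau(b)}(b)$ --- i.e.\ replacing your affine formula by a power of $g$ itself, which is what makes the interleaving constraints dissolve. I recommend you complete your argument along these lines or adopt the block construction directly.
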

\begin{proof}
Let $k_{g}=\min\{k\mid k<g(k)\}$. As in the proof of the previous Lemma we define $G\in \IIf$ by $G(n)=g^{n}(k_{g})$. For every $f\in\IIf$ and every $l\in\omega$, we let
\[
\sigma (f)(l)=\begin{cases}
l &\text{if $l<G(0)$,}\\
g^{f(n)-n}(l) &\text{if $G(n)\leq l<G(n+1)$, for $n\in\omega$.}
\end{cases}
\]

Let us check that $\sigma (f)$ is indeed an increasing injection from $\omega$ to $\omega$ for every $f\in \IIf$. Since $\sigma (f)$ is increasing and injective on each piece of its definition, it is enough to make the two following observations. Firstly, if $l<G(0)$, then
\[
\sigma (f)(l)=l<G(0)\leq G\circ f(0)=g^{f(0)}(G(0))=\sigma (f)(G(0)).
\]
Secondly, if $G(n)\leq l<G(n+1)$ then 
\begin{multline*}
\sigma (f)(l)=g^{f(n)-n}(l)<g^{f(n)-n}(G(n+1))\\=g^{f(n)+1}(k_{g})\leq g^{f(n+1)}(k_{g})=G ( f(n+1)),
\end{multline*}
but we have
\[
G(f(n+1))=g^{f(n+1)-(n+1)}(G(n+1))=\sigma (f)(G(n+1)).
\]
One can easily check that $\sigma : \IIf \to \IIf$ is continuous.
Now on the one hand
\[
\sigma (f\circ \Su)(l)=\begin{cases}
l &\text{if $l<G(0)$,}\\
g^{f(n+1)-n}(l) &\text{if $G(n)\leq l<G(n+1)$, for $n\in\omega$.}
\end{cases}
\]
and on the other hand
\[
\sigma (f)(g(l))=\begin{cases}
g(l) &\text{if $g(l)<G(0)$,}\\
g^{f(n)-n}(g(l)) &\text{if $G(n)\leq g(l)<G(n+1)$.}
\end{cases}
\]
By definition of $G$, we have $g(l)<G(0)$ if and only if $l=g(l)$. 
Moreover if $G(n)\leq l<G(n+1)$ then we have $G(n+1)\leq g(l)<G(n+2)$ and so
\[
\sigma ( f\circ \Su)(l)=g^{f(n+1)-n}(l)=g^{f(n+1)-(n+1)}(g(l))=\sigma (f)(g(l)),
\]
which proves the Lemma. 
\end{proof}




\printbibliography

\end{document}